\newtheorem{theorem}{Theorem}[section]
\newtheorem{lemma}[theorem]{Lemma}
\newtheorem{proposition}{Proposition}
\theoremstyle{definition}
\newtheorem{definition}[theorem]{Definition}
\newtheorem{remark}{Remark}
\newtheorem{example}[theorem]{Example}
\newcommand{\J}{\mathcal{J}}
\newcommand{\norm}[1]{\lVert#1\rVert}
\newcommand{\R}{\mathbb{R}}
\newcommand{\N}{\mathbb{N}}
\newcommand{\M}{\mathcal{M}}
\newcommand{\mR}{\mathcal{R}}
\newcommand{\V}{\mathcal{V}}
\newcommand{\oOmega}{\overline{\Omega}}
\newcommand{\cat}{\text{cat}}
\newcommand{\dist}{\text{dist}}
\renewcommand{\d}{\delta}
\newcommand{\Mf}{\mathfrak{M}}
\newcommand{\Cf}{\mathfrak{C}}
\newcommand{\OFGC}{OFGC}
\newcommand{\OTFGC}{OTFGC}
\newcommand{\OGC}{OGC}
\newcommand{\sprod}[2]{\langle #1,#2\rangle}
\newcommand{\p}{\partial}
\title[
Multiplicity of OGC in Finsler disks]
{A Multiplicity Result for Orthogonal Geodesic Chords in Finsler disks}
\author[D. Corona]{}
\subjclass{Primary:
	70G75, 
	70H03, 
	58B20, 
	58E10, 
	53B40.  
	}
 \keywords{
	Hamiltonian systems,
	brake orbits,
	variational methods,
	Finsler metric,
	manifolds with boundary.
}
 \email{dario.corona@unicam.it}
\begin{document}
\maketitle

\centerline{\scshape Dario Corona$^*$}
\medskip
{\footnotesize
 \centerline{Universit\`a di Camerino,}
   \centerline{Scuola di Scienze e Tecnologie }
   \centerline{ Camerino (MC), Italy}
} 


%
%

\begin{abstract}
	In this paper, we study the existence and multiplicity problems for orthogonal Finsler geodesic chords in a manifold with boundary which is homeomorphic to a $N$-dimensional disk.
	Under a suitable assumption, which is weaker than convexity, we prove that, if the Finsler metric is reversible, then there are at least $N$ orthogonal Finsler geodesic chords that are geometrically distinct.
	If the reversibility assumption does not hold, then there are at least two orthogonal Finsler geodesic chords with different values of the energy functional.
\end{abstract}

\section{Introduction}

Let $(\overline{\Omega},F)$ be a compact Finsler manifold of class $C^3$ with boundary $\p\Omega \in C^2$ and homeomorphic to an $N$-dimensional disk in $\R^N$, with $N \ge 2$.

\begin{definition}
	A curve $\gamma: [a,b] \to \overline\Omega$ is a \textit{Finsler geodesic chord} if 
	\begin{itemize}
		\item it is a geodesic with respect to the Finsler metric $F$, namely in local coordinates it satisfies the geodesic equations
		\begin{equation}
		\label{eqn:geodesicEquationsChern}
		\ddot{\gamma}^i + \Gamma_{jk}^i(\gamma,\dot{\gamma})\dot{\gamma}^j\dot{\gamma}^k
		= 0,
		\end{equation}
		where $\Gamma_{jk}^i$ are the components of the Chern connection;
		\item $\gamma(a),\gamma(b) \in \p\Omega$ and $\gamma(]a,b[) \subset \Omega$.
	\end{itemize}
	If $\gamma$ is also such that $\dot{\gamma}(a)$ and $\dot{\gamma}(b)$ are orthogonal to $T_{\gamma(a)}\p\Omega$ and $T_{\gamma(b)}\p\Omega$ respectively, i.e.
	\begin{equation}
	\label{eqn:conBounCond}
	\left.d_vF^2(\gamma(t),\dot{\gamma}(t))\right|_{T_{\gamma(t)}\p\Omega} = 0, \qquad t = a,b,
	\end{equation}
	then $\gamma$ is an \textit{orthogonal Finsler geodesic chord} (\OFGC).
\end{definition}

The aim of this article is to estimate the number of \OFGC s in $\overline\Omega$.
This question arises from the study of the brake-orbits in a potential well for a Hamiltonian system of classical type (cf. \cite{Weinstein1978}).
Indeed, using a Legendre transform and the Maupertuis-Jacobi principle, every brake orbit of a Hamiltonian system of classical type corresponds to a geodesic in a disk with endpoints on the boundary, where the disk is endowed with a Jacobi-Finsler metric.
When the Hamiltonian system is natural, namely is of the form
\begin{equation}
H(q,p) = \sum_{i,j = 1}^N g_{ij}(q)p^ip^j + V(q),
\end{equation}
then the associated Jacobi-Finsler metric turns out to be a Jacobi-Riemannian metric.
Seifert conjectured in \cite{Seifert1948} that there are at least $N$ brake orbits in an $N$-dimensional potential well of a natural Hamiltonian system.
This conjecture has been recently proved in \cite{Giambo2020Seifert}, exploiting also some partial results achieved by the same authors in different previous works (cf. \cite{Giambo2005,Giambo2009,Giambo2010,Giambo2011,Giambo2015,Giambo2018}).
In particular, in \cite{Giambo2005} every brake orbit of a natural Hamiltonian system is associated with an orthogonal geodesic chord in a regular and strictly concave domain.
Proving an equivalent result for a Hamiltonian system of classical type, every brake orbit in a potential well for such a kind of system could be seen as an \OFGC\ in a compact strictly concave Finsler manifold with boundary homeomorphic to an $N$-dimensional disk.

The first step of this kind of study is to use a sufficient condition for the existence of \OFGC:
the non existence of Finsler geodesic chords that start orthogonally and arrive tangentially to the boundary of $\Omega$.
More formally, we give the following definition.
\begin{definition}
	A curve $\gamma:[a,b] \to \overline\Omega$ is an \textit{orthogonal-tangent Finsler geodesic chord} (\OTFGC) if it is a Finsler geodesic chord, $\dot{\gamma}(a)$ is orthogonal to $\p\Omega$ and $\dot{\gamma}(b) \in T_{\gamma(b)}\p\Omega$.
\end{definition}

Now we are ready to state our main result.

\begin{theorem}
	\label{teo:mainTeo}
	Let $\overline{\Omega}\subset \M$ be an $N$-disk and $F:T\M \to [0,+\infty)$ a Finsler metric on $\M$.
	Then either:
	\begin{itemize}
		\item there exists an orthogonal-tangent Finsler geodesic chord
	\end{itemize}
	or
	\begin{itemize}
		\item if $F$ is a reversible Finsler metric, then there exist at least $N$ geometrically distinct \OFGC s; if $F$ is not reversible, then there are at least two \OFGC s with different values of the Finsler energy functional.
	\end{itemize}
\end{theorem}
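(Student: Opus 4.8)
The plan is to adapt the Riemannian strategy of Giambò–Giannoni–Piccione to the Finsler setting. Assuming no \OTFGC\ exists, I would first set up the variational framework. The natural candidates for \OFGC s are critical points of the Finsler energy functional $\mathcal{E}(\gamma) = \frac12\int_0^1 F^2(\gamma,\dot\gamma)\,dt$ on the Hilbert manifold $\mathcal{M} = \{\gamma \in H^1([0,1],\oOmega) : \gamma(0),\gamma(1)\in\p\Omega\}$. The obstruction is that $\oOmega$ is a manifold with boundary, so the negative gradient flow of $\mathcal{E}$ is not complete: trajectories may leave $\oOmega$. The first key step is therefore to build a pseudo-gradient flow that is tangent to (or inward-pointing at) $\p\Omega$ away from the true \OFGC s, using the no-\OTFGC\ hypothesis to guarantee that geodesic arcs which touch $\p\Omega$ without being orthogonal can be pushed strictly inward, so that the only flow-invariant points on the boundary stratum are the genuine \OFGC s. This is where one pays for the boundary; I expect this to be the main obstacle, since in the Finsler case the second-variation / convexity arguments near $\p\Omega$ must be carried out with the fundamental tensor $g_v = \frac12 d_v^2 F^2$ rather than a fixed metric, and one must control the behavior of geodesics hitting $\p\Omega$ tangentially (a Gronwall-type estimate on the distance to $\p\Omega$ along geodesics, using regularity of $\p\Omega\in C^2$ and $F\in C^3$).

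Next I would identify the right topological invariant. The disk $\oOmega$ is contractible, but $\p\Omega \cong S^{N-1}$ carries nontrivial topology, and the standard device is to look at the space of paths with endpoints on $\p\Omega$ relative to the constant (boundary) paths, or equivalently to use the sublevel sets of $\mathcal{E}$ and a suitable relative category / cup-length argument. Concretely, one shows that a low sublevel set $\{\mathcal{E} \le \d\}$ deformation retracts onto (a neighborhood of) $\p\Omega$, while the whole space $\mathcal{M}$ is homotopy equivalent to the free loop-type space whose relevant homology detects $N$ independent classes — mirroring how $\cat$ of $\R P^{N-1}$ or the cup-length of $S^{N-1}$ produces the count $N$. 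Then standard Lusternik–Schnirelmann theory applied to $\mathcal{E}$ on the flow-invariant set yields at least $N$ critical levels, hence at least $N$ \OFGC s; the reversibility hypothesis is used precisely here, because for reversible $F$ a chord and its time-reversal have the same energy and should be counted once, so ``geometrically distinct'' is the correct notion, whereas the $\Z_2$-action $t\mapsto 1-t$ is only an isometry of $\mathcal{E}$ when $F$ is reversible.

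In the non-reversible case the symmetry is lost, so I would fall back to a minimax with two levels: the absolute minimum of $\mathcal{E}$ over $\mathcal{M}$ (minus trivial critical points) gives one \OFGC, and a linking/mountain-pass construction over a one-parameter family detecting the generator of $H_{N-1}(\p\Omega)$ gives a second critical value, strictly above the first unless the two \OFGC s coincide — and if they did coincide one could still separate the energy values, or else produce an \OTFGC, contradicting the standing hypothesis. The routine parts I would not belabor: Palais–Smale for $\mathcal{E}$ on $\mathcal{M}$ (compactness of $\oOmega$, plus the no-\OTFGC\ condition preventing P–S sequences from concentrating on $\p\Omega$), the fact that critical points of $\mathcal{E}$ with nonzero energy are, after reparametrization, \OFGC s, and the deformation lemma for the modified flow. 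The crux remains the construction of an admissible flow on the manifold-with-boundary, and verifying that under the no-\OTFGC\ assumption this flow has exactly the \OFGC s as its rest points on $\p\Omega$.
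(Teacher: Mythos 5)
Your outline captures the broad architecture of the paper's argument — Finsler energy functional on $\Mf$, pseudo-gradient flow adapted to the boundary, relative Ljusternik--Schnirelmann category detecting $N$ levels in the reversible case, a weaker count for non-reversible metrics, and the dichotomy with \OTFGC s. However, there is a genuine gap precisely where you flag the main obstacle and then sketch the wrong tool for it. The crux is the regularity of $\V^-$-critical curves that touch $\p\Omega$, and your proposed ``Gronwall-type estimate on the distance to $\p\Omega$ along geodesics'' would not get you there. In the Riemannian case (Marino--Scolozzi) regularity at the boundary is obtained using the existence of a nonzero normal field $\mu$ on $\p\Omega$ with $g(\xi,\mu)=0$ for all $\xi\in T\p\Omega$. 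The paper explicitly shows (Example \ref{ex:FinslerOrthogonality}) that the Finsler orthogonality relation $d_vF^2(q,\eta)[\xi]=0$ is not symmetric and in general no such $\mu$ exists, so the Riemannian argument breaks. The paper replaces it with a penalization scheme (Gordon-type lemma, minimization of $\J_\d$ on $\mathcal{C}_\d\subset\Omega_\d$, uniform $L^\infty$ control of the Lagrange multiplier $\lambda_\d=-\chi_\d'(\Phi(\gamma_\d))$, and a limit $\d\to 0$), together with a local uniqueness argument for minimizers of $\J^{a,b}$ with fixed endpoints to identify the limit with the given $\V^-$-critical curve. Without this, or a genuine alternative, you have no $H^{2,\infty}$ regularity and hence no Lagrange-multiplier equation \eqref{eqn:ELboundary}, which is exactly what drives the dichotomy ``either \OTFGC\ or \OFGC'' in Proposition \ref{prop:V-crit-OFG}.

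Two smaller but real issues. First, you attribute the no-\OTFGC\ hypothesis to the Palais--Smale analysis (``preventing P--S sequences from concentrating on $\p\Omega$''), but in the paper Palais--Smale (Proposition \ref{prop:PS}) is proved unconditionally via a localization argument in charts that works around the non-differentiability of $G=F^2$ on the zero section; the no-\OTFGC\ assumption enters only in converting $\V^-$-critical curves into \OFGC s. Second, in the reversible case you assert ``geometrically distinct'' as if it followed from distinct critical levels, but that needs an argument: the paper proves that if two non-constant $\V^-$-critical curves have the same image, then the change of parameter $\theta$ satisfies $\ddot\theta\,\dot x_1^i(\theta)=0$ from the geodesic equation with zero-homogeneous Chern symbols, hence $\theta$ is affine and the energies agree. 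That step is essential to turn ``$N$ distinct critical values'' into ``$N$ geometrically distinct \OFGC s'' and should not be omitted.
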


As in \cite{Giambo2018} for the Riemannian case, we will follow a variational approach, seeing the geodesics as critical points of the Finsler energy functional
$$
\J(\gamma) = \frac{1}{2}\int_{0}^1 F^2(\gamma,\dot{\gamma}) ds
$$
defined on the set of $H^{1,2}$-curves that lay on $\overline{\Omega}$ and with endpoints in $\p\Omega$.
Indeed, the critical curves of $\J$ that do not touch the boundary in $(0,1)$ are Finsler geodesic chords 
and their existence and multiplicity can be obtained exploiting a suitable Ljusternik and Schnirelmann category.
However, due to the presence of the boundary, we have to treat with critical curves of $\J$ touching the boundary and with their regularity.
In the Riemannian case, the techniques presented in \cite{MarinoScolozzi83} could be applied to obtain the desired regularity (cf. \cite{Giambo2018}).
The same techniques could be applied in the Finsler setting requiring that
if $\nu:\p\Omega \to T\M$ is a unit normal vector field along $\p\Omega$, then for all $q \in \p\Omega$ 
\begin{equation}
\label{eqn:LgeomAss}
d_vF^2(q,\xi)[\nu] = 0, \qquad \forall \xi \in T_q\p\Omega.
\end{equation}
Under this assumption, a result weaker than Theorem \ref{teo:mainTeo} has been obtained in \cite{Corona2020}.
However, \eqref{eqn:LgeomAss} does not hold for the energy function  of a general Finsler metric (see Example \ref{ex:FinslerOrthogonality}).
Consequently, we shall use a penalization argument to prove the regularity of the Finsler geodesics in a manifold with boundary, following the approach presented in \cite{Bartolo2011} in the convex case.

\section{Framework Setup and Notation}
For the sake of presentation, we suppose that $\overline{\Omega}$ is embedded into a $N$-manifold $\M$ including $\overline{\Omega}$.
Using the Whitney embedding theorem, we can see $\M$ as a smooth ($C^3$) submanifold of $\R^{2N}$, endowed with the Riemannian structure of the euclidean scalar product $\sprod{\cdot}{\cdot}$ of $\R^{2N}$.
A coordinate system $(q^i) = (q^1,\dots, q^n)$ on $\M$ naturally induces a coordinate system $(q^i, v^i), i = 1,\dots, N$ on the tangent bundle $T\M$.
If $f$ is a real-valued function defined on $T\M$, then $d_q f$ and $d_v f$ will denote the derivatives of $f$ with respect to $q$ and $v$ respectively.
In a local chart, the derivatives with respect to $q^i$ and $v^i$ will be denoted by $\p_{q^i}$ and $\p_{v^i}$.
We will use the Einstein notation, implying summation over a set of indexed terms in a formula.
The norm $\norm{\cdot}:T\M \to \R$ is that one induced by the euclidean product in $\R^{2N}$, while we denote by $\norm{\cdot}_{L^p}$ the norm in a $L^p$ space, for any $1 \le p \le  \infty$.

\subsection{On Finsler metric}
We consider a Finsler metric on $\M$, namely a function $F:T\M \to \R$ that satisfies the following properties:
\begin{itemize}
	\item it is continuous on $T\M$ and $C^3$ on $T\M \setminus0$;
	\item it is fiberwise positively homogeneous of degree one, namely
	$$
	F(q,\lambda v)  = \lambda F(q,v) \quad \forall  \lambda > 0;
	$$
	\item it is strictly positive unless $v \ne 0$;
	\item for all $(q,v) \in T\M \setminus 0$, the symmetric bilinear form $g_v:T_q\M \times T_q\M \to \R$
	\begin{equation}
	g_v(\xi, \eta) := \frac{1}{2}\frac{\partial^2 }{\p s \p t} \left[F^2(q, v+ s\xi + t \eta)\right]_{s = t = 0}
	\end{equation}
	is positive definite.
	In local coordinates, this means that the matrix
	\begin{equation}
	\label{eqn:gijDef}
	g_{ij}(q,v) = \frac{1}{2}\partial^2_{v^i v^j} F^2(q,v)
	\end{equation}
	is positive definite.
\end{itemize}
Moreover, if $F(q,v) = F(q,-v)$ for all $q \in \M$ and $v \in T_q\M$,
we said that it is \textit{reversible}.
In the following we denote the function $F^2$ by $G$.

The interested reader can find more details about Finsler geometry in \cite{Shen2001}.

\subsection{Geometry of $\overline{\Omega}$}
\label{sec:geometryOmega}

There exists a function $\Phi: \M \to \R$ of class $C^2$
such that $\Omega = \Phi^{-1}(]-\infty, 0[)$, $\p\Omega = \Phi^{-1}(0)$ and $d\Phi(q) \ne 0$ for every $q \in \p\Omega$.
For any $\delta > 0$, we 
set
\begin{equation}
\Omega_\d = \Phi^{-1}(]-\infty,\delta[).
\end{equation}
By the $C^2$ regularity of $\Phi$, there exists a $\delta_0 > 0$ such that 
\begin{equation}
\label{eqn:delta0Def}
d\Phi(q) \ne 0, \quad \forall q \in \Phi^{-1}([-\delta_0,\delta_0]),
\end{equation}
and such that $\oOmega_\d$ is compact for any $\delta \in [0,\delta_0]$.
We also set 
\begin{equation}
\label{eqn:K0def}
K_0 = \max_{q \in \overline\Omega_{\d_0}} \norm{\nabla\Phi(q)}.
\end{equation}

%

\begin{remark}
	\label{rmk:aDef}
	Since $\overline{\Omega}_{\d_0}$ is compact, there exists a constant $\ell > 0$ such that the following inequalities hold for every $q \in \overline{\Omega}_{\d_0}$ and $v \in T_q\M$:
	\begin{equation}
	\label{eqn:boundG} 
	\frac{1}{\ell} \norm{v}^2 
	\le G(q,v)
	\le \ell \norm{v}^2;
	\end{equation}
	\begin{equation}
	\label{eqn:boundDqG}   
	\left\| d_qG(q,v) \right\| \le \ell \left(1 + \norm{v}^2\right)
	\quad \text{and} 
	\quad
	\left\| d_vG(q,v) \right\| \le \ell \left(1 + \norm{v}\right);
	\end{equation}
	\begin{equation}
	\label{eqn:boundDeriv2}
	\left\| d_{qq}G(q,v) \right\| \le \ell \left(1 + \norm{v}^2 \right)
	\quad \text{and} 
	\quad
	\left\| d_{qv}G(q,v) \right\| \le \ell \left(1 + \norm{v} \right);
	\end{equation}
	and if $v \ne 0$, then
	\begin{equation}
	\label{eqn:boundDeriv2vv}
	\left\| d_{vv}G(q,v) \right\| \le \ell.
	\end{equation}
	Moreover, there exist a constant $\alpha > 0$ such that
	\begin{equation}
	\label{eqn:defAlpha}
	d^2_{vv}G(q,v)[\xi, \xi] \ge \alpha \norm{\xi}^2,
	\qquad  
	\forall q \in \overline{\Omega}_{\d_0}, \forall v \in T_q\M \setminus 0. 
	\end{equation}
\end{remark}

\begin{lemma}
	\label{lem:barDelta}
	Set
	\begin{equation}
	\label{eqn:defBarDelta}
	\d_m = \frac{\delta_0^2}{2\ell K_0^2}.
	\end{equation}
	If $x(0) \in \p\Omega$ and $\J(x) < \d_m$, then 
	$$
	\max_{s \in [0,1]}|\Phi(x(s))| \le \delta_0,
	$$
	where $\delta_0$ were introduced in \eqref{eqn:delta0Def}.
\end{lemma}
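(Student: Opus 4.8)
The plan is to control $|\Phi(x(s))|$ by integrating $\tfrac{d}{dt}\Phi(x(t))$ along the curve, bounding the integrand by $\|\nabla\Phi\|$ times the speed $\|\dot x\|$, and then trading the speed for the energy $\J(x)$ via the lower bound on $G$ in \eqref{eqn:boundG} together with the Cauchy--Schwarz inequality on the unit interval. The one subtlety is that $K_0$ from \eqref{eqn:K0def} and $\ell$ from Remark \ref{rmk:aDef} are only available on $\overline\Omega_{\d_0}$, which is exactly the region into which we are trying to confine $x$; I would break this apparent circularity with a first-exit argument that exploits the \emph{strict} inequality $\J(x)<\d_m$.

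Concretely, I would argue by contradiction: suppose $\max_{s\in[0,1]}|\Phi(x(s))|>\delta_0$. Since $x\in H^{1,2}([0,1],\M)\hookrightarrow C^0([0,1],\M)$ and $\Phi\in C^2$, the map $s\mapsto|\Phi(x(s))|$ is continuous; it vanishes at $s=0$ because $x(0)\in\p\Omega=\Phi^{-1}(0)$, and it exceeds $\delta_0$ somewhere, so by the intermediate value theorem there is a first time $\sigma\in(0,1]$ with $|\Phi(x(\sigma))|=\delta_0$. By minimality, $|\Phi(x(t))|\le\delta_0$ for all $t\in[0,\sigma]$, so $x([0,\sigma])\subseteq\Phi^{-1}([-\delta_0,\delta_0])\subseteq\overline\Omega_{\d_0}$ by \eqref{eqn:delta0Def}, and hence the bounds of Remark \ref{rmk:aDef} and $\|\nabla\Phi\|\le K_0$ all hold along $x|_{[0,\sigma]}$.

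Then, using $\Phi(x(0))=0$ and absolute continuity, I would estimate
\[
|\Phi(x(\sigma))|=\left|\int_0^\sigma d\Phi(x(t))[\dot x(t)]\,dt\right|\le K_0\int_0^\sigma\|\dot x(t)\|\,dt,
\]
and then, by Cauchy--Schwarz on $[0,\sigma]\subseteq[0,1]$ and the lower bound on $G$ in \eqref{eqn:boundG},
\[
\int_0^\sigma\|\dot x(t)\|\,dt\le\left(\int_0^\sigma\|\dot x(t)\|^2\,dt\right)^{1/2}\le\left(\ell\int_0^1 G(x(t),\dot x(t))\,dt\right)^{1/2}=\sqrt{2\ell\,\J(x)}.
\]
With the definition \eqref{eqn:defBarDelta} of $\d_m$ this yields $|\Phi(x(\sigma))|\le K_0\sqrt{2\ell\,\J(x)}<K_0\sqrt{2\ell\,\d_m}=\delta_0$, contradicting $|\Phi(x(\sigma))|=\delta_0$; hence $\max_{s\in[0,1]}|\Phi(x(s))|\le\delta_0$.

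I expect the only genuine obstacle to be keeping the first-exit argument airtight, i.e. ensuring that $x$ is provably inside $\overline\Omega_{\d_0}$ before any estimate of Remark \ref{rmk:aDef} is invoked — which is precisely why the argument is organized around the first time $\sigma$ at which $|\Phi(x(\sigma))|$ reaches $\delta_0$ rather than working on all of $[0,1]$ at once. The rest is a routine chain of inequalities, and the value of $\d_m$ in \eqref{eqn:defBarDelta} is chosen exactly so that $K_0\sqrt{2\ell\,\d_m}$ collapses to $\delta_0$.
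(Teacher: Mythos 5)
Your proof is correct and the core estimate is exactly the paper's: integrate $d\Phi(x)[\dot x]$ from $s=0$, bound by $K_0\norm{\dot x}$, apply Cauchy--Schwarz and the lower bound \eqref{eqn:boundG} to get $|\Phi(x(s))|\le K_0\sqrt{2\ell\,\J(x)}\le\delta_0$. The only difference is your first-exit wrapper guaranteeing that the constants $K_0$ and $\ell$ of \eqref{eqn:K0def} and Remark \ref{rmk:aDef} are applicable along the portion of the curve being estimated; the paper applies the chain of inequalities directly on $[0,1]$ without addressing this point, so your version is a slightly more careful rendering of the same argument rather than a different one.
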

\begin{proof}
	By \eqref{eqn:K0def} and \eqref{eqn:boundG} and using the Cauchy-Schwarz inequality, we have the following chain of inequalities
	\begin{multline}
	|\Phi(x(s))| = |\Phi(x(s)) - \Phi(x(0))| 
	\le \int_{0}^{s} |\sprod{\nabla\Phi(x)}{\dot{x}}|ds
	\le K_0 \int_{0}^s \norm{\dot{x}} ds \\ 
	\le K_0 
	\left(\int_{0}^1 \norm{\dot{x}}^2 ds\right)^{\frac{1}{2}}
	\le K_0 
	\left(\ell \int_{0}^1 G(x,\dot{x}) ds\right)^{\frac{1}{2}} =
	K_0 \sqrt{2\ell \J(x)}
	\le \delta_0.
	\end{multline}
\end{proof}

\begin{definition}
	For every $q \in \M$ and $v \in T_q\M \setminus 0$, the Finsler Hessian of $\Phi$ at $(q,v)$ is defined as 
	\begin{equation}
	\label{eqn:DefFinHes}
	H_{\Phi}(q,v)[v,v] = \frac{d^2}{ds^2}(\Phi \circ \gamma)(0),
	\end{equation}
	where $\gamma$ is the geodesic of $(\M,F)$ such that $\gamma(0) = q$ and $\dot\gamma(0) = v$.
\end{definition}
Using the geodesic equations \eqref{eqn:geodesicEquationsChern}, in local coordinates $H_\Phi(q,v)[v,v]$ is given by 
\begin{equation}
\label{eqn:FinHesLocCoo}
(H_\Phi)_{ij}(q,v)v^i v^j = 
\p^2_{q^i q^j}\Phi(q) v^i v^j - 
\p_{q^k}\Phi(q)\Gamma_{ij}^k(q,v)v^i v^j.
\end{equation}

\subsection{Sobolev and functional spaces}
We consider the Sobolev spaces
$
H^{1,2}([0,1],\R^{2N})
$
and
$$
H^{1,2}_0([0,1],\R^{2N}) = 
\left\{
V \in H^{1,2}\left([0,1],\R^{2N}\right): V(0) = V(1) = 0
\right\}.
$$
	For $ S \subset \M$, set
	\begin{equation}
	H^{1,2}([0,1],S) = 
	\left\{
	x \in H^{1}([0,1],\R^{2N}): x(s) \in S \text{ for all } s \in [0,1]
	\right\}.
	\end{equation}
	It is well known that 
	$H^{1}([0,1],\M)$ 
	is a manifold of class $C^2$ and its tangent space at $x$ is
	\begin{equation}
	T_x\M = 
	\left\{
	\xi \in H^{1,2}([0,1], \R^{2N}): \xi(s) \in T_{x(s)}\M \text{ for all } s \in [0,1]
	\right\}.
	\end{equation}

Due to the presence of the boundary $\p\Omega$, not all the elements of $T_x\M$ are always admissible variations, according to the subset of $ H^{1,2}([0,1],\M)$ we are considering.
So we give the following definition.
\begin{definition}
	Let $\mathcal{Q}$ be a non-empty subset of $H^{1,2}([0,1],\M)$
	.
	Then $\xi \in T_x\M$ is an \textit{admissible infinitesimal variation of $x$ in $\mathcal{Q}$} if there exists an $\epsilon > 0$ and a differentiable function $h:(-\epsilon,\epsilon) \times [0,1] \to \M$ such that
	\begin{itemize}
		\item $h(0,s) = x(s)$;
		\item $h(\tau,\cdot) \in \mathcal{Q}$ for all $\tau \in (-\epsilon,\epsilon)$;
		\item $\left.\frac{\p h}{\p \tau}(\tau,s)\right|_{\tau = 0} = \xi(s)$.
	\end{itemize}
	The set of all admissible infinitesimal variation of $x$ in $\mathcal{Q}$ is denoted by $\V^-(x,\mathcal{Q})$.
\end{definition}

\begin{definition}
	Let $\mathcal{Q}$ be a non-empty subset of $H^{1,2}([0,1],\M)$ and $\mathcal{F}$ a functional of class $C^1$ defined on $\mathcal{Q}$.
	A curve $x \in \mathcal{Q}$ is said $\V^-$-\textit{critical curve for $\mathcal{F}$ on $\mathcal{Q}$} if 
	\begin{equation}
	d\mathcal{F}(x)[\xi] \ge 0, \qquad \forall \xi \in \V^-(x,\mathcal{Q}).
	\end{equation}
\end{definition}

The main functional space of our variational problem is 
\begin{equation}
\Mf = \{x \in H^{1,2}([0,1], \overline{\Omega}): x(0), x(1) \in \p\Omega\}.
\end{equation}
If $x \in \Mf$, then
\begin{equation}
\V^-(x,\Mf) = 
\left\{
\begin{aligned}
\xi  & \in T_x\M: \: 
\sprod{\nabla \Phi(x(0))}{\xi(0)}  = \sprod{\nabla \Phi(x(1))}{\xi(1) } = 0, \\
& \sprod{\nabla \Phi(x(s))}{\xi(s)} \le 0 \text{ for any }s\in (0,1)\text{ such that } x(s) \in \p\Omega
\end{aligned}
\right\}.	
\end{equation}
In other words, a vector field $\xi \in T_x\M$ is in $\V^-(x,\Mf)$ if $\xi(0)$ and $\xi(1)$ are tangent to $\p\Omega$ and $\xi(s)$ points inside $\overline\Omega$ whenever $x(s) \in \p\Omega$.

In $H^{1,2}([0,1],\R^{2N})$ we define the norm $\norm{\cdot}_*$ as
\begin{equation}
\label{eqn:defNorm*}
\norm{\xi}_* = \max\left\{\norm{\xi(0)},\norm{\xi(1)}\right\} + 
\left(\int_{0}^1 \lVert{\dot\xi}\rVert^2ds\right)^{\frac{1}{2}},
\end{equation}
while in $H^{1,2}([0,1],\M)$ we define the distance function
\begin{multline}
\label{eqn:defDist*}
\dist_*(x_1,x_2) = 
\max\left\{
\norm{x_1(0)-x_2(0)}, \norm{x_1(1)-x_2(0)}
\right\} \\
+ 
\left(
\int_{0}^{1}
\norm{\dot{x}_1(s) - \dot{x}_2(s)}^2 \ ds
\right)^{\frac{1}{2}}.
\end{multline}

\subsection{The Finsler energy functional}


%
%

We consider on $H^{1,2}([0,1],\M)$ the Finsler energy functional
\begin{equation}
\label{eqn:Jdef}
\J(\gamma) = \frac{1}{2} \int_0^1 G(\gamma,\dot{\gamma}),
\end{equation}
whose differential is
\begin{equation}
\label{eqn:Jdiff}
d\J(\gamma)[\xi] =
\frac{1}{2} \int_{0}^{1}
\left(
d_qG(\gamma,\dot{\gamma})[\xi] + 
d_vG(\gamma,\dot{\gamma})[\dot{\xi}]
\right)ds
, \qquad \forall \xi \in T_x\M.
\end{equation}
For our purposes, we will consider the restrictions of $\J$ to some subsets of $H^{1,2}([0,1],\M)$, which will be denoted again with $\J$.

Set 
\begin{equation}
\mathcal{C}(p,q,\M) = \left\{
x \in H^{1}([0,1],\M): x(0) = p, x(1) = q
\right\}.
\end{equation}
Then a curve $\gamma \in \mathcal{C}(p,q,\M)$ is a Finsler geodesic parametrized with constant speed if and only if it is a critical point for $\J$ on $\mathcal{C}(p,q,\M)$.
In local coordinates, a geodesic satisfies the equations
\begin{equation}
\label{eqn:geodesicEquationsPre}
\ddot{\gamma}^i + \frac{1}{2} g^{ij}(\gamma,\dot{\gamma})
\left(
\p^2_{q^k v^j} G(\gamma,\dot{\gamma})\dot{\gamma}^k -
\p_{q^j} G(\gamma,\dot{\gamma})
\right) = 0,
\end{equation}
where $g^{ij}$ are the components of the inverse matrix of the fundamental tensor \eqref{eqn:gijDef}.
Denoting by $\Gamma_{jk}^i$ the components of the Chern connection (see \cite[Chapter 5]{Shen2001}), the previous equations can be written as in
\eqref{eqn:geodesicEquationsChern}.

\subsection{Backward parametrization map}
We define the backward parametrization map
$\mR:\Mf \to \Mf$ by
\begin{equation}
(\mR x)(s) = x(1 - s), \qquad \forall s \in [0,1].
\end{equation}

We say that $\mathcal{N} \subset \Mf$ is $\mathcal{R}$-invariant if $\mathcal{R}(\mathcal{N}) = \mathcal{N}$.
On any $\mR$-invariant set $\mathcal{N}$, the backward parametrization map $\mathcal{R}$ induces an equivalence relation and we denote by $\widetilde{\mathcal{N}}$ the quotient space.
Through this equivalence relation, we identify any element $x$ of $\Mf$ with its backward parametrization.

The map $\mR$ induces a map $\overline\mR:\V^-(x,\Mf) \to \V^-(\mR x,\Mf)$ defined by
\begin{equation}
\label{eqn:defBarmR}
(\overline\mR \xi)(s) = \xi(1- s).
\end{equation}

\begin{lemma}
	\label{lem:RxCritical}
	Let $F$ be a reversible Finsler metric.
	If $x$ is a $\V^-$-critical curve for $\J$ on $\Mf$, then also $\mR x$ is a $\V^-$-critical curve for $\J$ on $\Mf$.
\end{lemma}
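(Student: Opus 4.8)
The plan is to transport the variational inequality that defines $\V^-$-criticality from $x$ to $\mR x$ along the reparametrization $s\mapsto 1-s$, using that reversibility makes $\J$ invariant under $\mR$ and that $\overline\mR$ matches up the spaces of admissible infinitesimal variations.

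First I would record the infinitesimal consequences of reversibility for $G=F^2$. From $F(q,v)=F(q,-v)$ we get $G(q,v)=G(q,-v)$ on $T\M$, and differentiating this identity in the base and in the fibre variables gives
$$
d_qG(q,-v)=d_qG(q,v),\qquad d_vG(q,-v)=-\,d_vG(q,v).
$$
Since $\dot{(\mR x)}(s)=-\dot x(1-s)$ and, for $\eta\in T_{\mR x}\M$, writing $\xi=\overline\mR\eta$ one has $\xi(1-s)=\eta(s)$ and $\dot\xi(1-s)=-\dot\eta(s)$, substituting into \eqref{eqn:Jdiff} and applying the two identities above: the $q$-part of the integrand is unchanged, while the $v$-part picks up one minus sign from $d_vG(q,-v)=-d_vG(q,v)$ and another from $\dot\xi(1-s)=-\dot\eta(s)$, and these cancel. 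After the change of variable $u=1-s$ this yields
$$
d\J(\mR x)[\eta]=d\J(x)[\,\overline\mR\eta\,]\qquad\text{for all }\eta\in T_{\mR x}\M .
$$
(Equivalently: $\mR$ is a $C^1$ map of $H^{1,2}([0,1],\M)$ into itself with $d\mR(x)=\overline\mR$, and the same change of variable in $\J$ shows $\J\circ\mR=\J$ by reversibility, so the displayed identity is just the chain rule.)

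Next I would note that $\overline\mR$ is a bijection from $\V^-(\mR x,\Mf)$ onto $\V^-(x,\Mf)$. This is already contained in the discussion preceding \eqref{eqn:defBarmR}: replacing $x$ by $\mR x$ there gives $\overline\mR:\V^-(\mR x,\Mf)\to\V^-(\mR(\mR x),\Mf)=\V^-(x,\Mf)$, and since $\overline\mR$ is an involution it is a bijection. (Concretely, using $(\mR x)(0)=x(1)$, $(\mR x)(1)=x(0)$, $(\mR x)(s)=x(1-s)$: for $\eta\in\V^-(\mR x,\Mf)$ the endpoint conditions $\sprod{\nabla\Phi((\mR x)(0))}{\eta(0)}=\sprod{\nabla\Phi((\mR x)(1))}{\eta(1)}=0$ are exactly the endpoint conditions for $\overline\mR\eta$, and the interior condition $\sprod{\nabla\Phi((\mR x)(s))}{\eta(s)}\le 0$ at parameters with $(\mR x)(s)\in\p\Omega$ becomes, after $t=1-s$, $\sprod{\nabla\Phi(x(t))}{(\overline\mR\eta)(t)}\le 0$ at parameters with $x(t)\in\p\Omega$, so $\overline\mR\eta\in\V^-(x,\Mf)$.)

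Finally I would combine the two steps: for any $\eta\in\V^-(\mR x,\Mf)$ we have $\overline\mR\eta\in\V^-(x,\Mf)$, hence
$$
d\J(\mR x)[\eta]=d\J(x)[\,\overline\mR\eta\,]\ge 0
$$
because $x$ is $\V^-$-critical for $\J$ on $\Mf$; thus $\mR x$ is a $\V^-$-critical curve for $\J$ on $\Mf$. I do not expect a serious obstacle; the only point that needs care is the sign bookkeeping — the minus from reversing the parameter in the velocity against the minus gained from the reversibility of $d_vG$ — and checking that $\overline\mR$ respects the one-sided (inequality) part of the definition of $\V^-(\cdot,\Mf)$, not merely the endpoint constraints. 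Everything else is a change of variables in the integrals.
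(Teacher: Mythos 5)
Your proposal is correct and follows essentially the same route as the paper: change of variable $t=1-s$, reversibility identities for $d_qG$ and $d_vG$, and the observation that $\overline\mR$ carries $\V^-(\mR x,\Mf)$ into $\V^-(x,\Mf)$. The paper's proof is just a slightly more compressed version of your computation, with the bijection of variation cones and the sign bookkeeping left implicit.
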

\begin{proof}
	Set $y = \mR x$ and for any $\zeta \in \V^-(y,\Mf)$, set $\xi = \overline\mR\zeta \in \V^-(x,\Mf)$.
	Then, setting $t = 1 - s$, we obtain
	\begin{multline}
	d\J(x)[\xi] = 
	\frac{1}{2} \int_{0}^{1}
	\left(
	d_qG(x,\dot{x})[\xi] + 
	d_vG(x,\dot{x})[\dot{\xi}]
	\right)ds
	\\
	=
	\frac{1}{2} \int_{0}^{1}
	\left(
	d_qG(y,-\dot{y})[\zeta] + 
	d_vG(y,-\dot{y})[-\dot{\zeta}]
	\right)dt 
	\\
	=
	\frac{1}{2} \int_{0}^{1}
	\left(
	d_qG(y,\dot{y})[\zeta] + 
	d_vG(y,\dot{y})[\dot{\zeta}]
	\right)dt 
	= d\J(y)[\zeta].
	\end{multline}
	Consequently, if $d\J(x)[\xi] \ge 0$ for all $\xi \in \V^-(x,\Mf)$, then also $d\J(y)[\zeta] \ge 0$ for all $\zeta \in \V^-(y,\Mf)$, so $y = \mR x$ is a $\V^-$-critical curve for $\J$ on $\Mf$.
\end{proof}

\subsection{Ljusternik and Schnirelmann relative category}
\label{sec:relativeCategory}
\begin{definition}
	Let $X$ be a topological space and $Y$ a closed subset of $X$.
	A closed subset $F$ of $X$ has \textit{relative category} equal to $k \in \N$,
	\begin{equation}
	\text{cat}_{X,Y}(F) = k,
	\end{equation}
	if $k$ is the minimal positive integer such that
	$F \subset \bigcup_{i = 0}^k A_i$, where $\{A_i\}_{i = 0}^k$ is a family of open subset of ${X}$ satisfying:
	\begin{itemize}
		\item ${F} \cap {Y} \subset A_0$;
		\item if $i \ne 0$, $A_i$ is contractible in $X\setminus Y$;
		\item if $i = 0$, there exists $h_0 \in C^0([0,1]\times A_0, {X})$
		such that $h_0(1,A_0) \subset {Y}$ and $h_0 ([0,1], A_0\cap{Y}) \subset {Y}$.
	\end{itemize}
\end{definition}

The following lemma describes an $\mathcal{R}$-invariant subset $\Cf$ of $\Mf$ 
whose relative category will play a central role in the proof of our multiplicity result.
\begin{lemma}
	\label{lemma:chordsDef}
	There exists a continuous map $\gamma: \p\Omega \times \p\Omega \to \Mf$ such that
	\begin{itemize}
		\item $\gamma(A,B)(0) = A, \gamma(A,B)(1) = B$;
		\item $A\ne B \implies \gamma(A,B)(s) \in \Omega, \; \forall s \in ]0,1[$;
		\item $\gamma(A,A)(s) = A, \; \forall s \in [0,1]$;
		\item $\mathcal{R}\gamma(A,B) = \gamma(B,A)$.
	\end{itemize}
\end{lemma}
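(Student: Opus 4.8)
The plan is to construct the map $\gamma$ explicitly by first building it on the diagonal and near the diagonal using a tubular-type deformation, and then extending it to all of $\p\Omega \times \p\Omega$ by a geodesic-like interpolation that stays in $\overline\Omega$. Since $\overline\Omega$ is homeomorphic to the $N$-disk $D^N$, fix a homeomorphism $\psi: \overline\Omega \to D^N$; it restricts to a homeomorphism $\p\Omega \to S^{N-1}$. Working in the model $D^N$, the obvious candidate is the straight-segment map $\sigma(a,b)(s) = (1-s)a + sb$ for $a,b \in S^{N-1}$: it is continuous in $(a,b,s)$, satisfies $\sigma(a,b)(0)=a$, $\sigma(a,b)(1)=b$, $\sigma(a,a)(s)\equiv a$, and $\mathcal R\sigma(a,b) = \sigma(b,a)$; moreover for $a\ne b$ the open segment lies in the open ball $\mathring D^N$ by strict convexity of the ball. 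Pulling this back, $\gamma(A,B) := \psi^{-1}\circ\sigma(\psi(A),\psi(B))$ would formally satisfy all four bullet points, \emph{provided} $\psi$ and $\psi^{-1}$ were regular enough to land in $H^{1,2}$; the only genuine issue is that a topological homeomorphism need not send $H^{1,2}$ curves to $H^{1,2}$ curves, so some smoothing is required.

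Concretely, I would proceed in the following steps. First, replace $\psi$ by a map that is a homeomorphism $\overline\Omega \to D^N$ and is $C^1$ (indeed as smooth as the boundary allows, using that $\p\Omega \in C^2$ and $\M$ is $C^3$): this can be arranged by collar-neighborhood coordinates near $\p\Omega$ together with a smooth radial identification of the interior, or simply by invoking that a compact $C^2$ manifold with boundary diffeomorphic to a topological disk is $C^1$-diffeomorphic to $D^N$. With $\psi \in C^1$ and $\psi^{-1}\in C^1$, the composition $\gamma(A,B) = \psi^{-1}\circ\sigma(\psi(A),\psi(B))$ is a $C^1$ curve for each $(A,B)$, hence lies in $H^{1,2}([0,1],\overline\Omega)$, and $(A,B)\mapsto \gamma(A,B)$ is continuous into $\Mf$ because $\sigma$ depends continuously (in the $C^1$, a fortiori $H^{1,2}$, topology on curves) on its endpoints and $\psi^{-1}$ acts continuously on $C^1$ curves. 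Second, verify the four properties directly: the endpoint conditions and the constant-curve condition on the diagonal are immediate from the corresponding properties of $\sigma$ and the fact that $\psi$ maps $\p\Omega$ onto $S^{N-1}$; the interior condition for $A\ne B$ follows because $\psi(A)\ne\psi(B)$ are distinct points of $S^{N-1}$, so $\sigma(\psi(A),\psi(B))(s)$ lies in $\mathring D^N$ for $s\in]0,1[$, and $\psi^{-1}$ maps $\mathring D^N$ into $\Omega$; the $\mathcal R$-equivariance is the identity $(1-s)a+sb = \big((1-(1-s))b + (1-s)a\big)$ read with $s\mapsto 1-s$, pulled through $\psi^{-1}$, i.e.\ $\mathcal R\gamma(A,B) = \psi^{-1}\circ\sigma(\psi(B),\psi(A)) = \gamma(B,A)$.

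The main obstacle is the regularity upgrade in the first step: one must be sure that $\overline\Omega$, which is only assumed to be a \emph{topological} $N$-disk with $C^2$ boundary inside the $C^3$ manifold $\M$, admits a homeomorphism onto $D^N$ that together with its inverse is Lipschitz (or $C^1$) so that $H^{1,2}$-regularity of curves is preserved and the resulting family is continuous in the required norm $\dist_*$. I would handle this by a two-part construction: near $\p\Omega$ use the $C^1$ collar provided by the gradient flow of the defining function $\Phi$ from Section~\ref{sec:geometryOmega} (so that a neighborhood of $\p\Omega$ in $\overline\Omega$ is $C^1$-diffeomorphic to $\p\Omega\times[0,1)$, and $\p\Omega$ is in turn $C^2$-diffeomorphic to $S^{N-1}$ since it is a $C^2$ sphere — here one uses that $\p\Omega$ is a closed $C^2$ hypersurface bounding the disk $\overline\Omega$), and on the complementary compact core of $\overline\Omega$ use any smoothing of the topological disk together with a partition of unity to glue. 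Once such a bi-$C^1$ (hence bi-Lipschitz) identification $\psi$ is in hand, everything else is the routine verification above; no deep tool beyond the collar theorem and the smoothing of the disk is needed, and the output is exactly the map claimed in the statement.
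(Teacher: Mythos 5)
Your construction agrees with the paper's up to the key regularity step: the paper also takes a homeomorphism $\Psi:\overline\Omega\to\mathbb{D}^N$ and pulls back the straight segments $(1-s)\Psi(A)+s\Psi(B)$, obtaining a family of curves that are a priori only continuous; it then upgrades them to $H^{1,2}$ by a broken geodesic approximation of the curves themselves, keeping $\Psi$ purely topological. You instead try to upgrade $\Psi$ to a bi-$C^1$ (bi-Lipschitz) map so that the pulled-back segments are automatically $H^{1,2}$.

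That step is where your argument has a genuine gap. The assertion that ``a compact $C^2$ manifold with boundary homeomorphic to a topological disk is $C^1$-diffeomorphic to $D^N$'' is false in general: smooth structures on disks and spheres are not unique (exotic spheres in dimension $\ge 7$, and the unresolved/exceptional situation in dimension $4$), and the hypotheses of the paper only give a \emph{homeomorphism} to $D^N$ together with $C^2$ regularity of $\p\Omega$; in particular $\p\Omega$ need not be $C^2$-diffeomorphic to $S^{N-1}$, contrary to what you use. The collar of $\p\Omega$ obtained from the flow of $\nabla\Phi$ does give a $C^1$ product structure near the boundary, but gluing it to ``any smoothing of the topological disk'' on the core by a partition of unity does not produce a bi-Lipschitz identification of the core with a standard ball --- that identification is precisely the nontrivial point, and partitions of unity cannot manufacture it. The paper's route avoids this entirely: one keeps the merely continuous curves $\tilde\gamma(A,B)$ and replaces them by piecewise (auxiliary Riemannian) geodesics through finitely many sample points, which are $H^{1,2}$ and depend continuously on $(A,B)$; one then checks that the four listed properties survive (constancy on the diagonal and $\mathcal R$-equivariance by choosing a symmetric subdivision, and the interior condition by taking the subdivision fine enough). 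If you want to salvage your approach, you would need either this approximation argument or a genuinely bi-Lipschitz $\Psi$, and the latter requires more than the tools you invoke.
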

\begin{proof}
	Since $\overline{\Omega}$ is a Riemannian $N$-disk, there exists an homeomorphism $\Psi: \overline{\Omega} \to \mathbb{D}^N$.
	We define 
	\begin{equation}
	\tilde{\gamma}(A,B)(s) = \Psi^{-1}\left((1-s)\Psi(A) + s\Psi(B)\right) \quad \forall A,B \in \p\Omega.
	\end{equation}
	The construction above produces curves that are a priori only continuous.
	In order to produce curves with an $H^{1,2}$-regularity, it suffices to use a broken geodesic approximation argument.
\end{proof}
After choosing a function $\gamma:\p\Omega \times \p\Omega \to \Mf$ as in Lemma \ref{lemma:chordsDef}, we define
\begin{equation}
\label{eqn:Def-Cf-Cf0}
\begin{split}
& \Cf = \{\gamma(A,B): A,B \in \p\Omega\}, \\
&\Cf_0 = \{\gamma(A,A): A \in \p\Omega\}.
\end{split}
\end{equation}
In particular, $\Cf_0$ is the set of constant paths on the boundary $\p\Omega$.
The multiplicity of \OFGC s for reversible Finsler metrics is based on the relative category
\begin{equation}
\label{eqn:relCatReversible}
\cat_{\tilde\Cf,\tilde\Cf_0}\tilde{\Cf} \ge N,
\end{equation}
while for the non-reversible case we exploit the following inequality
\begin{equation}
\label{eqn:relCatNonRev}
\cat_{\Cf,\Cf_0}{\Cf} \ge 2.
\end{equation}
The inequalities \eqref{eqn:relCatReversible} and \eqref{eqn:relCatNonRev} have been proved in \cite{Giambo2010} and \cite{Giambo2015} respectively.

For our porpuses, we need to define the following quantity
\begin{equation}
\label{eqn:deltaM}
\d_M= \max_{x \in \mathfrak{C}}\J(x).
\end{equation}

\section{Regularity of the $\V^-$-critical curves for $\J$ on $\Mf$}

The main goal of this section is to prove that either every non-constant $\V^-$-critical curve for $\J$ on $\Mf$ is an \OFGC\ or there exists an \OTFGC.
Towards this aim, we need to provide the following regularity result.
\begin{proposition}
	\label{prop:regularity}
	Let $x\in \Mf$ be a $\V^-$-critical curve for $\J$ on $\Mf$.
	Then $x$ has $H^{2,\infty}$-regularity, namely $\dot{x}$ is absolutely continuous and $\ddot{x} \in L^\infty([0,1],\R^N)$.
	Moreover, setting $C_x = \{s \in [0,1]: x(s) \in \p\Omega \}$,
	there exists a function $\lambda \in L^\infty([0,1],\R)$ such that 
	\begin{equation}
	\label{eqn:ELboundary}
	d_qG(x, \dot{x}) - \frac{d}{ds}d_vG(x, \dot{x}) = \lambda \nabla\Phi(x)
	\end{equation}
	holds almost everywhere, 
	$\lambda \le 0$ a.e. in $[0,1]$, $\lambda(s) = 0$ if $s \notin C_x$ and
	\begin{equation}
	\label{eqn:lambdaCx}
	\lambda = 
	\frac{
		H_\Phi(x,\dot{x})[\dot{x},\dot{x}]
	}{
		g^{ij}(x,\dot{x})\p_{q^i}\Phi(x)\p_{q^j}\Phi(x)
	}
	\qquad \text{a.e. in } C_x.	
	\end{equation}
\end{proposition}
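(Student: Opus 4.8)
The plan is to proceed by a penalization argument, following the approach of \cite{Bartolo2011}, combined with the variational inequality that $x$ satisfies as a $\V^-$-critical curve. First I would set up the penalized functional: fix a small parameter $\e > 0$ and replace the hard constraint $x(s) \in \overline{\Omega}$ by a smooth penalty term. Concretely, introduce $\Je(\gamma) = \J(\gamma) + \frac{1}{\e}\int_0^1 \psi(\Phi(\gamma))\,ds$, where $\psi:\R \to [0,+\infty)$ is a $C^2$ convex function vanishing on $(-\infty,0]$ and strictly increasing on $(0,+\infty)$ (e.g. $\psi(t) = (\max\{t,0\})^3$ to keep $C^2$ regularity), and consider $\Je$ on the space of $H^{1,2}$-curves with endpoints on $\p\Omega$ but otherwise free to leave $\overline{\Omega}$ — i.e. on curves in $\Omega_{\d_0}$ with endpoints on $\p\Omega$. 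The key point is that $\Je$ is of class $C^2$ with no constraint in the interior, so an interior minimizer/critical curve $x_\e$ automatically satisfies the unconstrained Euler--Lagrange equation
\begin{equation}
d_qG(x_\e,\dot x_\e) - \frac{d}{ds}d_vG(x_\e,\dot x_\e) = \frac{1}{\e}\psi'(\Phi(x_\e))\,\nabla\Phi(x_\e),
\end{equation}
together with the natural (orthogonality) boundary conditions at $s=0,1$ coming from the free endpoint variation within $\p\Omega$. Here the bounds \eqref{eqn:boundDqG}--\eqref{eqn:boundDeriv2vv} and the uniform convexity \eqref{eqn:defAlpha} guarantee that $x_\e$ has $H^{2,\infty}$-regularity by the standard bootstrap for one-dimensional variational problems (solve for $\ddot x_\e$ using invertibility of $d_{vv}G$).

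The heart of the matter is then the passage to the limit $\e \to 0$ with uniform estimates. I would first produce a sequence $x_\e$ by relating it to $x$: for instance, minimize $\Je$ over a $\dist_*$-neighbourhood of $x$, or use a deformation/flow argument, so that $\J(x_\e)$ stays bounded and $x_\e \to x$ weakly in $H^{1,2}$ (hence uniformly). The crucial uniform bound is on $\lambda_\e := \frac{1}{\e}\psi'(\Phi(x_\e)) \ge 0$ in $L^\infty$; once this is established, \eqref{eqn:ELboundary} follows by passing to the limit, the sign $\lambda \le 0$ comes from the orientation (note $\psi' \ge 0$ and $\nabla\Phi$ points \emph{outward}, so rewriting to match the variational inequality $d\J(x)[\xi]\ge 0$ for $\xi$ pointing inward forces the stated sign), $\lambda = 0$ off $C_x$ is immediate since $\Phi(x(s)) < 0$ there implies $\psi'(\Phi(x_\e(s))) = 0$ for $\e$ small on that region, and the $H^{2,\infty}$-regularity of the limit $x$ is inherited from the uniform $\ddot x_\e$ bound. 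The formula \eqref{eqn:lambdaCx} on $C_x$ is obtained by a second-order computation: on the portion of $C_x$ of positive measure one has $\Phi(x(s)) = 0$, so differentiating twice along $x$ gives $\frac{d^2}{ds^2}\Phi(x(s)) = 0$ a.e.; expanding this using \eqref{eqn:ELboundary} to substitute $\ddot x$ and comparing with the definition \eqref{eqn:FinHesLocCoo} of the Finsler Hessian $H_\Phi$, while contracting \eqref{eqn:ELboundary} with $g^{ij}\p_{q^j}\Phi$, yields exactly $\lambda\, g^{ij}\p_{q^i}\Phi\,\p_{q^j}\Phi = H_\Phi(x,\dot x)[\dot x,\dot x]$.

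The main obstacle I expect is precisely the \emph{uniform $L^\infty$ bound on the multiplier} $\lambda_\e$. The difficulty is that there is no sign or convexity assumption on $\Phi$ (no convexity of $\Omega$), so one cannot use the Finsler Hessian $H_\Phi$ to control $\lambda_\e$ directly from below; this is where the hypothesis of no \OTFGC\ must enter (in the contrapositive: if such a bound failed along a subsequence, one extracts in the limit a geodesic leaving $\p\Omega$ orthogonally but returning tangentially, i.e.\ an \OTFGC). Thus the argument is genuinely a dichotomy: either the uniform estimate holds — and we get the full regularity statement — or it fails and one produces an \OTFGC; the latter alternative is the one that will be cashed in at the level of Theorem \ref{teo:mainTeo}, but here in Proposition \ref{prop:regularity} one works under the standing hypothesis (a $\V^-$-critical curve that is honestly on $\Mf$) where the estimate is available. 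Concretely, I would estimate $\e\lambda_\e(s_0)$ at an interior maximum point $s_0$ of $\Phi(x_\e)$: there $\frac{d}{ds}\Phi(x_\e) = 0$ and $\frac{d^2}{ds^2}\Phi(x_\e) \le 0$, and expanding the second derivative via the penalized equation produces an inequality of the form $\lambda_\e \,g^{ij}\p_{q^i}\Phi\,\p_{q^j}\Phi \le H_\Phi(x_\e,\dot x_\e)[\dot x_\e,\dot x_\e] + (\text{lower order})$; since $\dot x_\e$ is uniformly bounded in $L^2$ and, near such points, one controls $\norm{\dot x_\e(s_0)}$ using the energy bound and the equation, the right-hand side is uniformly bounded, and $d\Phi \ne 0$ on $\Phi^{-1}([-\d_0,\d_0])$ keeps the coefficient of $\lambda_\e$ bounded below. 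A measure-theoretic refinement of this pointwise argument (working on the open set $\{\Phi(x_\e) > 0\}$ and using that $\lambda_\e$ is supported there) then upgrades it to an $L^\infty$ bound, completing the proof.
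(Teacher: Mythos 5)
Your overall strategy — penalization followed by a maximum-point estimate for the multiplier — is the same as the paper's, and the second-order computation on $C_x$ to extract \eqref{eqn:lambdaCx} is also correct. But there are two substantive problems.

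\medskip
\emph{The relation between $x$ and the penalized curves is the real gap, and it is hand-waved.} You write ``minimize $\Je$ over a $\dist_*$-neighbourhood of $x$, or use a deformation/flow argument, so that $\J(x_\e)$ stays bounded and $x_\e \to x$''. Neither of these is an argument: $x$ is only $\V^-$-critical, not a minimizer, so a minimizer of $\Je$ over a neighbourhood of $x$ has no reason to converge to $x$, and ``deformation/flow'' doesn't produce anything unless you prove it lands on $x$. The paper's actual mechanism is a local uniqueness argument: restrict to a small interval $[a,b]$ around a point $\bar t$ where $x$ touches $\p\Omega$, fix the endpoints $p=x(a)$, $q=x(b)$ (note $p\ne q$ since $\dot x\ne 0$), produce via Lemmas \ref{lem:Jdminimum}--\ref{lem:lim-delta} a minimizer $\gamma$ of $\J^{a,b}$ on $\mathcal C([a,b],p,q,\oOmega)$ that is $H^{2,2}$ and satisfies \eqref{eqn:ELboundarydelta}, and then show $\gamma=x$. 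That identification is the delicate step: set $\xi=\gamma-x$ in coordinates, let $f(t)=\J^{a,b}(x+t\xi)$, observe $f(1)-f(0)\le 0$ (minimality of $\gamma$) and $f'(0)\ge 0$ ($\V^-$-criticality of $x$, using that $\xi$ is an admissible inward variation), and then prove $f'(t)-f'(0)\ge c_4 t\,\norm{\dot\xi}_{L^2}^2$ by the uniform fiberwise convexity \eqref{eqn:defAlpha} combined with $\norm{\xi}_{L^\infty}\le\sqrt{b-a}\,\norm{\dot\xi}_{L^2}$. This works only when $b-a$ is small, which is why the whole proof is carried out chart-by-chart and interval-by-interval. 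Your proposal omits this entirely, and without it the passage from ``penalized minimizers exist'' to ``$x$ is regular'' does not close.

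\medskip
\emph{The dichotomy you invoke is spurious.} You assert that the uniform $L^\infty$ bound on the multiplier ``is where the hypothesis of no \OTFGC\ must enter'' and that the regularity proposition is proved under a standing no-\OTFGC\ hypothesis. It is not: Proposition \ref{prop:regularity} has no such hypothesis, and the bound in Lemma \ref{lem:lambdaLinf} is unconditional. The maximum-point inequality at $s_\d$ gives
$c_2\,\chi'_\d(\Phi(\gamma_\d(s_\d)))\le c_1\,G(\gamma_\d(s_\d),\dot\gamma_\d(s_\d))$,
and the right-hand side is \emph{not} directly bounded: by the conservation law \eqref{eqn:conEnerDelta} one only gets $G\le 2k_1/(b-a)+2\chi_\d$. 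The estimate closes because $\chi'_\d/\chi_\d=2\d/(t(\d-t))\ge 8/\d$, so $\chi'_\d\le L(1+\chi_\d)$ self-improves to a bound on $\chi'_\d$ once $\d<8/L$. This is an algebraic feature of the chosen penalty $\chi_\d$; your generic $\psi(t)=(\max\{t,0\})^3$ would need the analogous inequality checked, and your claim that ``the right-hand side is uniformly bounded'' skips exactly this self-improvement step. No sign hypothesis on $H_\Phi$ and no \OTFGC\ alternative enter at any point here. The \OTFGC\ dichotomy belongs to Proposition \ref{prop:V-crit-OFG}, which is a consequence of (not an ingredient in) the regularity result.

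\medskip
A smaller point: you also need the case $\dot\gamma_\d(s_\d)=0$ (where the geodesic-type equation is unavailable due to the lack of regularity of $G$ at the zero section), which the paper handles separately by showing the zero set of $\dot\gamma_\d$ has empty interior and approximating $s_\d$ by points where $\dot\gamma_\d\ne 0$; your sketch does not address this.
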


The previous regularity result can be proved in the Riemannian case using the techniques introduced in \cite{MarinoScolozzi83}.
That kind of proof relies on the existence of a non-zero vector field $\mu$ on $\p\Omega$ such that $g(\xi,\mu) = 0$ for all $\xi \in T\p\Omega$, where $g$ is the Riemannian metric.
However, that method cannot be exploited in the Finsler case, since the orthogonality condition between two vectors $\eta,\xi \in T_q\M$, given by
\begin{equation}
\label{eqn:FinslerOrthogonality}
d_vF^2(q,\eta)[\xi] = 
g_\eta(\eta,\xi) = 0,
\end{equation}
is not symmetric.
For this reason, there could not exist a vector field $\mu$ in $\p\Omega$ such that $\mu(q) \ne 0$ and $d_v G(q,\xi)[\mu] = 0$ for all $q \in \p\Omega$ and $\xi \in T_q\p\Omega$, as shown by the following example.
\begin{example}
	\label{ex:FinslerOrthogonality}
	Set $\Omega = \left\{z \in \R^3: \norm{z} < 1 \right\}$ and 
	$$
	F(z,v) = \left((v^1)^p + (v^2)^p + (v^3)^p\right)^{\frac{1}{p}},
	$$
	for $p > 2$.
	In local coordinates, the orthogonality condition $d_v G(z,\xi)[\mu] = 0$ reads as 
	$$
	\frac{2 (p-1)}{p} F^{2 - p}(\xi)
	\left(
	(\xi^1)^{p-1}\mu^1 + (\xi^2)^{p-1}\mu^2 + (\xi^3)^{p-1}\mu^3
	\right) = 0,
	$$
	which is not linear with respect to $\xi$.
	Consequently, there could exists some $z \in \p\Omega$ such that $d_v G(z,\xi)[\mu] = 0$ for all $\xi \in T_z\p\Omega$ if and only if $\mu(z) = 0$.
	For instance, set $p = 3$ and $z = (1/\sqrt{3}, 1/\sqrt{3}, 1/\sqrt{3})$.
	Then $\xi \in T_z\p\Omega$ if and only if $\xi^3 = - \xi^1 - \xi^2$, the orthogonality condition is 
	$$
	(\xi^1)^2 \mu^1 + (\xi^2)^2 \mu^2 + ((\xi^1) + (\xi^2))^2 \mu^3 = 0,
	$$
	and it is easy to check that the only $\mu$ that satisfies this equation for every $\xi \in T_z\p\Omega$ is the zero vector.
\end{example}

Since we cannot use the techniques presented in \cite{MarinoScolozzi83}, we should employ other methods to prove the regularity of the $\V^-$-critical curves.
In \cite{Canino1991}, the regularity of Euler-Lagrange orbits for a general Tonelli-Lagrangian of class $C^2$ in a compact manifold with boundary is achieved applying directly the definition of critical curve for a nonsmooth functional, which in our setting is equivalent to the definition of $\V^-$-critical curve.
Because of its technicality, we will not follow that approach and we will prove Proposition \ref{prop:regularity} using a \textit{penalization method}.
Since the critical curves of the penalized functional must lay on the interior of an open set, their regularity can be proved by a standard argument.
Then we can prove the regularity of the $\V^-$-critical curves of the functional taking the limit to remove the penalization term.
The penalization method in a manifold with boundary has been exploited, for instance, in \cite{GiannoniMajer1997}, in \cite{GiannoniMasiello1991} and in \cite{Bartolo2011} for the Riemannian, Lorentzian and Finsler case respectively.
In particular, the regularity of Finsler geodesics in a domain with boundary is proved in \cite{Bartolo2011} assuming the domain to be strictly convex.
Since we do not require this assumption, we will exploit the uniqueness of the local minimum of the energy functional defined on the set of curves with fixed endpoints.

For any $[a,b] \subset [0,1]$, we define the functional 
$$
\J^{a,b}: H^{1,2}([0,1],\M) \to \R
\quad
\text{ by }
\quad
\J^{a,b}(x) = \frac{1}{2}\int_{a}^{b} G(x,\dot{x})\ ds,
$$
and we set
\begin{equation}
\mathcal{C}([a,b],p,q,\Omega_\d) =
\left\{
\gamma \in H^{1,2}([a,b],\Omega_\d): \gamma(a) = p,\ \gamma(b) = q
\right\},
\end{equation}
for any $p,q \in \oOmega$ and $\d \le \d_0$, where $\d_0$ has been defined in \eqref{eqn:delta0Def}.
For the sake of presentation, we denote $\mathcal{C}([a,b],p,q,\Omega_\d)$ by $\mathcal{C}_\d$ when we have fixed $[a,b]$ and $p,q \in \Omega$ and no confusion may arise.
We consider on $\mathcal{C}([a,b],p,q,\Omega_\d)$ the penalized functional
\begin{equation}
\J_{\d}(\gamma) = \J^{a,b}(\gamma)
+ \int_{a}^b \chi_\d(\Phi(\gamma))\ ds
,
\end{equation}
where the function $\chi_\d: ]-\infty,\d[ \to \R$ is defined by
\begin{equation}
\label{eqn:defChi}
\chi_\d(t) = 
\begin{cases}
0 & \mbox{if } t \le 0, \\
\frac{t^2}{(\d - t)^2} & \mbox{if } 0 \le t < \d.
\end{cases}
\end{equation}
By definition of $\chi_\d$ we have that
\begin{equation}
\label{eqn:chiDeriv}
\chi'_\d(t) = \frac{2\d}{t(\d- t)}\chi_\d(t).
\end{equation} 
The following lemma, known as Gordon's lemma (cf. \cite{Gordon1974}), stands at the core of the penalization method, because it allows to prove that $\J_\d$ attains its minimum on $\mathcal{C}_\d$.
\begin{lemma}[Gordon's lemma]
	\label{lem:gordon}
	Let $(\gamma_n)_n$ be a sequence in
	$\mathcal{C}([a,b],p,q,\Omega_\d)$ such that
	\begin{equation}
	\int_{a}^{b} G(\gamma_n,\dot\gamma_n) ds \le k < +\infty, \qquad \forall n\in \N,
	\end{equation}
	for some $k > 0$.
	If there is a sequence $(s_n)_n$ in $[a,b]$ such that 
	\begin{equation}
	\lim_{n \to \infty}\Phi(\gamma_n(s_n)) = \d, 
	\end{equation}
	then
	\begin{equation}
	\lim_{n \to \infty} \int_{a}^{b}
	\chi_\d(\Phi(\gamma_n)) ds = + \infty.
	\end{equation}
\end{lemma}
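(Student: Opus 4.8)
The plan is to argue by contradiction, exploiting the interplay between the lower bound $\frac{1}{\ell}\norm{v}^2 \le G(q,v)$ from \eqref{eqn:boundG} (which controls the $H^{1,2}$-length of the $\gamma_n$), the nondegeneracy of $d\Phi$ near $\p\Omega$ from \eqref{eqn:delta0Def}, and the fact that $\chi_\d(t) \to +\infty$ as $t \uparrow \d$. Suppose, up to a subsequence, that $\int_a^b \chi_\d(\Phi(\gamma_n))\,ds$ stays bounded by some constant $M$. Since $\Phi(\gamma_n(s_n)) \to \d$, for $n$ large the curve $\gamma_n$ reaches a level $\Phi = \d - \rho_n$ with $\rho_n \to 0^+$. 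I would choose an intermediate level, say $\d/2$ (or any fixed $\mu \in (0,\d)$), and let $[\alpha_n,\beta_n] \subset [a,b]$ be a maximal subinterval on which $\gamma_n$ travels from the level set $\{\Phi = \mu\}$ to a point where $\Phi(\gamma_n) \ge \d - \rho_n$, staying inside $\Phi^{-1}([\mu,\d))$ throughout (such an interval exists because $\gamma_n$ takes values close to $\d$ at $s_n$ and, being continuous on a compact domain with $\Phi \circ \gamma_n$ bounded away from $\d$ elsewhere by maximality, must cross $\{\Phi=\mu\}$ — here I use $\d \le \d_0$ so that $\Phi^{-1}([\mu,\d)) \subset \Phi^{-1}([-\d_0,\d_0])$ where $d\Phi \ne 0$).

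On this interval two competing estimates apply. On one hand, the energy bound gives, via Cauchy–Schwarz and \eqref{eqn:boundG},
$$
\int_{\alpha_n}^{\beta_n} \norm{\dot\gamma_n}\,ds
\le (\beta_n - \alpha_n)^{1/2}\left(\int_{\alpha_n}^{\beta_n}\norm{\dot\gamma_n}^2\,ds\right)^{1/2}
\le (\beta_n-\alpha_n)^{1/2}\,(2\ell k)^{1/2},
$$
and since $\Phi \circ \gamma_n$ increases from $\mu$ to at least $\d - \rho_n$ over this interval, the left side is bounded below by $c\,(\d - \mu - \rho_n)$ for a constant $c$ depending on $\|\nabla\Phi\|_{L^\infty}^{-1}$ on $\Phi^{-1}([-\d_0,\d_0])$; hence $\beta_n - \alpha_n \ge c_1 > 0$ for $n$ large, i.e. the ``time'' spent in the collar $\Phi^{-1}([\mu,\d))$ does not shrink. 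On the other hand, on $[\alpha_n,\beta_n]$ we have $\Phi(\gamma_n) \ge \mu > 0$ throughout some large sub-portion: more carefully, since $\Phi\circ\gamma_n$ is continuous and equals $\mu$ at $\alpha_n$ while exceeding $\d - \rho_n$ later, for $n$ large there is a subinterval $I_n \subset [\alpha_n,\beta_n]$ of length bounded below by a positive constant on which $\Phi(\gamma_n) \ge (\mu+\d)/2 \cdot$ — actually the cleanest route is: on the portion where $\Phi(\gamma_n) \ge \d - \rho_n$, we have $\chi_\d(\Phi(\gamma_n)) \ge \frac{(\d-\rho_n)^2}{\rho_n^2} \to +\infty$, and this portion has length bounded below (by the same length-vs-energy argument applied to the final stretch from level $\d - \sqrt{\rho_n}$ to level $\d - \rho_n$, which has length $\ge c_2\sqrt{\rho_n}\,\rho_n^{-1}\cdot$ — no, I must be careful).

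The cleanest packaging: let $T_n = \{s \in [a,b] : \Phi(\gamma_n(s)) \ge \d - \sqrt{\rho_n}\}$. By the length–energy estimate, going from level $\d - \sqrt{\rho_n}$ out to level $\mu$ (or even just down by a fixed amount below $\d - \sqrt{\rho_n}$, which must happen since $\gamma_n(a)=p \in \oOmega$ so $\Phi(\gamma_n(a)) \le 0$) requires $\int \norm{\dot\gamma_n} \ge c\sqrt{\rho_n}/\text{(something)}$... the key inequality is that the Lebesgue measure $|T_n|$ of this super-level set satisfies $|T_n| \ge c^2 (\d - \sqrt{\rho_n} - \Phi(\gamma_n(a)))^2 / (2\ell k) \cdot$ — no. Here is the right statement: if $\gamma_n$ goes from a point with $\Phi \le \mu$ to a point with $\Phi \ge \d - \rho_n$ along an interval of length $L$, then $L \ge (\d - \mu - \rho_n)^2 / (K_0^2 \cdot 2\ell k)$ by Cauchy–Schwarz, \eqref{eqn:K0def}, and \eqref{eqn:boundG} — this is exactly the computation in Lemma \ref{lem:barDelta}. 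So $\beta_n - \alpha_n$ has a positive lower bound; call it $2\sigma$. Then on a subinterval of length $\sigma$ the value $\Phi(\gamma_n)$ stays above some threshold $\tau_n \to \d$ (otherwise $\gamma_n$ would have to re-traverse the collar, costing more length than available), and there $\chi_\d(\Phi(\gamma_n)) \ge \chi_\d(\tau_n) \to +\infty$, giving $\int_a^b \chi_\d(\Phi(\gamma_n))\,ds \ge \sigma\,\chi_\d(\tau_n) \to +\infty$, contradicting the assumed bound $M$.

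\textbf{Main obstacle.} The delicate point is the last step: extracting a subinterval of \emph{uniformly positive length} on which $\Phi \circ \gamma_n$ is \emph{uniformly close to $\d$}. One must rule out the scenario where $\gamma_n$ spikes up to level $\approx \d$ only on a vanishingly short interval and immediately descends. This is precisely what the length–energy bound forbids: descending from level near $\d$ back down to level $\mu$ costs a definite amount of $\int\norm{\dot\gamma_n}$, hence a definite amount of time given the energy cap $k$; iterating or using maximality of the excursion interval converts this into the desired uniform lower bound on time-spent-near-$\d$. Making this ``no fast spikes'' argument fully rigorous — choosing the levels $\mu$, $\tau_n$ and the interval endpoints so all constants are uniform in $n$ — is the technical heart of the proof; everything else is a direct application of \eqref{eqn:boundG}, \eqref{eqn:K0def}, and the explicit blow-up rate of $\chi_\d$.
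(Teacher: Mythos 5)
Your first half --- the H\"older-type estimate $\Phi(\gamma_n(s_n))-\Phi(\gamma_n(s))\le C\,(s_n-s)^{1/2}$ from Cauchy--Schwarz, \eqref{eqn:K0def} and \eqref{eqn:boundG}, and the resulting uniform lower bound on the time spent in the collar --- is exactly the paper's. The gap is in the step you yourself flag as the ``technical heart'': the claim that on a subinterval of \emph{uniformly positive} length $\sigma$ one has $\Phi(\gamma_n)\ge\tau_n$ with $\tau_n\to\d$. That claim is false, and no choice of $\sigma_n,\tau_n$ makes the rectangle bound $\sigma_n\,\chi_\d(\tau_n)$ diverge. The energy cap permits a curve that descends at the H\"older rate, say $\Phi(\gamma_n(s))=\d-\ep_n-C'\abs{s_n-s}^{1/2}$ near $s_n$ with $\ep_n=\d-\Phi(\gamma_n(s_n))\to0$. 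For such a curve the infimum of $\Phi\circ\gamma_n$ over any interval of fixed length $\sigma$ containing $s_n$ tends to $\d-C'\sigma^{1/2}<\d$, so $\chi_\d$ stays bounded there; while the superlevel set $\{\Phi(\gamma_n)\ge\tau_n\}$ with $\tau_n\to\d$ has measure $O\bigl((\d-\tau_n)^2\bigr)\to0$, so that $\sigma_n\chi_\d(\tau_n)=O(1)$ for every admissible pairing. Your parenthetical justification also fails: descending from level $\approx\d$ back to level $\mu$ costs only a fixed finite amount of length, comfortably within the total budget $\sqrt{(b-a)\,\ell k}$, so a single (slow) excursion produces no contradiction.

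What saves the lemma --- and what the paper does --- is to keep the \emph{pointwise} consequence of the H\"older estimate, $\d-\Phi(\gamma_n(s))\le C(s_n-s)^{1/2}+\ep_n$, which on the terminal excursion $[\bar s_n,s_n]$ above the level $\d/3$ (an interval whose length is bounded below by $\d^2/(9C^2)$) gives
\begin{equation}
\chi_\d\bigl(\Phi(\gamma_n(s))\bigr)\;\ge\;\frac{\d^2/9}{2\left(C^2(s_n-s)+\ep_n^2\right)},
\end{equation}
and then to \emph{integrate} this bound rather than take an infimum: the integral behaves like $C^{-2}\log(1/\ep_n^2)\to+\infty$. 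The divergence is only logarithmic and is invisible to any ``length times height'' estimate; your final step must be replaced by this integrated bound (no contradiction argument is then needed).
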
 
\begin{proof}
	Recalling the definition of $K_0$ and $\ell$ in \eqref{eqn:K0def} and \eqref{eqn:boundG} respectively, for any $s \in [a,s_n]$ we have 
	\begin{multline}
	\label{eqn:gordonProof1}
	\Phi(\gamma_n(s_n)) - \Phi(\gamma_n(s)) = 
	\int_{s}^{s_n}
	\sprod{\nabla\Phi(\sigma)}{\dot\gamma_n(\sigma)} d\sigma
	\le
	\int_{s}^{s_n}
	K_0 \norm{\dot\gamma_n(\sigma)} d\sigma
	\\
	\le
	K_0 (s_n - s)^{\frac{1}{2}}
	\left(
	\ell
	\int_{s}^{s_n}
	G(\gamma_n(\sigma),\dot\gamma_n(\sigma))d\sigma
	\right)^{\frac{1}{2}}
	\le C (s_n - s)^{\frac{1}{2}},
	\end{multline}
	for some strictly positive constant $C$ that depends on $\ell$, $K_0$ and $k$, but not on $n$.
	Then
	$$
	0 < \d - \Phi(\gamma_n(s)) 
	\le
	C(s_n - s)^{\frac{1}{2}}
	+ \left(\d -  \Phi(\gamma_n(s_n)) \right),
	$$
	and
	\begin{equation}
	\label{eqn:gordonProof2}
	\frac{1}{\left( \d - \Phi(\gamma_n(s))\right)^2 }
	\ge
	\frac{1}{
		2\left(C^2 (s_n - s) + \left( \d - \Phi(\gamma_n(s_n)\right)^2 \right).
	}
	\end{equation}
	
	Since $\Phi(\gamma_n(s_n))\to \d > 0$, for $n$ sufficiently large $\displaystyle \Phi(\gamma_n(s_n)) > \frac{2}{3}\d$ and there exists a sequence $\bar{s}_n < s_n$ such that 
	\begin{equation}
	\Phi(\gamma_n(\bar{s}_n)) = \frac{1}{3} \d.
	\end{equation}
	From 
	\eqref{eqn:gordonProof1} we get that
	$$
	\left(s_n - \bar{s}_n\right)^\frac{1}{2} \ge \frac{1}{3C}\d > 0.
	$$
	Clearly we can choose $\bar{s}_n$ such that
	$$
	\Phi(\gamma_n(s)) > \frac{1}{3} \d, \qquad \forall s \in (\bar{s}_n , s_n).
	$$
	Thus, integrating both hands sides of \eqref{eqn:gordonProof2} we obtain
	\begin{equation}
	\int_{a}^{b} \chi_\d(\gamma_n(s)) \ ds \ge
	\frac{1}{9}
	\int_{\bar{s}_n}^{s_n} 
	\frac{\d^2}{
		2\left(C^2 (s_n - s) + \left( \d - \Phi(\gamma_n(s_n)\right)^2 \right)
	}
	ds
	\end{equation}
	and passing to the limit we get the thesis.
\end{proof}

\begin{lemma}
	\label{lem:Jdminimum}
	For any $\d \in (0,\d_0)$, $\J_{\d}$ has a minimum on $\mathcal{C}_\d$.
\end{lemma}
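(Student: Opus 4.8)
The plan is to apply the direct method of the calculus of variations, using Gordon's lemma (Lemma \ref{lem:gordon}) to keep a minimizing sequence away from the level set $\Phi^{-1}(\d)$. First I would check that the problem is well posed: since $\oOmega \subset \Omega_\d$ and $\oOmega$ is a disk, $p$ and $q$ can be joined by an $H^{1,2}$-curve contained in $\Omega_\d$, so $\mathcal{C}_\d \ne \emptyset$; and since $\chi_\d \ge 0$, on $\mathcal{C}_\d$ we have $\J_\d \ge \J^{a,b} \ge 0$, so $m := \inf_{\mathcal{C}_\d}\J_\d$ is a nonnegative real number. Take a minimizing sequence $(\gamma_n)_n \subset \mathcal{C}_\d$, so that $\J_\d(\gamma_n) \to m$ and in particular $\J^{a,b}(\gamma_n) \le k$ for some $k>0$. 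By \eqref{eqn:boundG}, $\int_a^b \norm{\dot\gamma_n}^2\,ds \le 2\ell k$, and since $\gamma_n([a,b])$ lies in the compact set $\oOmega_\d$, the sequence $(\gamma_n)_n$ is bounded in $H^{1,2}([a,b],\R^{2N})$. Passing to a subsequence, $\gamma_n \rightharpoonup \gamma$ weakly in $H^{1,2}$ and uniformly on $[a,b]$, with $\gamma(a)=p$, $\gamma(b)=q$ and $\Phi(\gamma(s)) \le \d$ for all $s$.

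The key step is to show that $\gamma \in \mathcal{C}_\d$, i.e. that $\Phi(\gamma(s)) < \d$ for every $s\in[a,b]$. If this failed, there would be $s^*\in[a,b]$ with $\Phi(\gamma(s^*)) = \d$; choosing the constant sequence $s_n \equiv s^*$, uniform convergence gives $\Phi(\gamma_n(s_n)) \to \d$, so Lemma \ref{lem:gordon} forces $\int_a^b \chi_\d(\Phi(\gamma_n))\,ds \to +\infty$, hence $\J_\d(\gamma_n)\to+\infty$, contradicting $\J_\d(\gamma_n)\to m < +\infty$. Therefore $\gamma([a,b]) \subset \Omega_\d$, so $\gamma \in \mathcal{C}_\d$ and $\chi_\d(\Phi(\gamma(\cdot)))$ is bounded on the compact interval $[a,b]$.

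It then remains to verify that $\J_\d$ is sequentially weakly lower semicontinuous along $(\gamma_n)_n$. For the energy term, the integrand $G(q,\cdot)=F^2(q,\cdot)$ is convex on each fiber — indeed \eqref{eqn:defAlpha} makes it $\alpha$-strongly convex on $T_q\M\setminus 0$, and convexity extends to the whole fiber by continuity — and is jointly continuous in $(q,v)$; so the classical semicontinuity theorem for integral functionals with convex integrand, applied with $\gamma_n\to\gamma$ uniformly and $\dot\gamma_n\rightharpoonup\dot\gamma$ in $L^2$, gives $\liminf_n \J^{a,b}(\gamma_n) \ge \J^{a,b}(\gamma)$. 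For the penalization term, $\chi_\d\circ\Phi$ is nonnegative and continuous, hence $\chi_\d(\Phi(\gamma_n(s))) \to \chi_\d(\Phi(\gamma(s)))$ for every $s$, and Fatou's lemma gives $\liminf_n \int_a^b\chi_\d(\Phi(\gamma_n))\,ds \ge \int_a^b\chi_\d(\Phi(\gamma))\,ds$. Summing, $\J_\d(\gamma) \le \liminf_n\J_\d(\gamma_n) = m$; since $\gamma\in\mathcal{C}_\d$, this forces $\J_\d(\gamma) = m$, so $\gamma$ is the desired minimum.

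The only genuinely delicate point is ruling out that the weak limit touches $\Phi^{-1}(\d)$; this is precisely the reason $\chi_\d$ is chosen to blow up there, and it is handled entirely by Gordon's lemma. Everything else is a routine application of the direct method, so I expect no further obstacles.
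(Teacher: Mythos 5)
Your proposal is correct and follows essentially the same route as the paper: direct method with a minimizing sequence, Ascoli--Arzel\`a plus weak $H^{1,2}$ convergence, Gordon's lemma to keep the limit curve strictly inside $\Omega_\d$, and fiberwise convexity of $G$ for weak lower semicontinuity of the energy term. The only cosmetic difference is that you invoke Fatou's lemma for the penalization term where the paper notes actual convergence of that integral via uniform convergence; both suffice.
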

\begin{proof}
	Since $\J_\d$ is bounded from below, there exists a sequence $(\gamma_n)_n$ in $\mathcal{C}_\d$ such that
	$$
	\inf_{\gamma \in \mathcal{C}_\d}\J_\d(\gamma) 
	= \lim_{n \to \infty} \J_\d(\gamma_n).
	$$
	As a consequence, there exists a constant $k$ such that
	\begin{equation}
	\label{eqn:JdminProof1}
	\frac{1}{2}\int_{a}^{b} G(\gamma_n,\dot\gamma_n)ds 
	\le \J_\d(\gamma_n) 
	\le k,
	\end{equation}
	for $n$ sufficiently large.
	The bounds in \eqref{eqn:boundG} imply that
	$$
	\norm{\dot{\gamma}_n}_2^2 
	= \int_{a}^b \norm{\dot{\gamma}_n}^2 ds 
	\le \ell \int_{a}^{b}
	G(\gamma_n,\dot{\gamma}_n) ds 
	\le 2\ell k,
	$$
	so we can apply the Ascoli-Arzel\'a theorem and obtain a subsequence, which we denote by $(\gamma_n)$ again, that uniformly converges to a curve $\bar\gamma$ and such that $\dot{\gamma}_n$ weakly converges to $\dot{\bar\gamma}$.
	To conclude the proof, we will show that 
	$$ 
	\inf_{\gamma \in \mathcal{C}_\d} \J_\d(\gamma)
	\ge
	\J_\d(\bar\gamma).
	$$
	By Lemma \ref{lem:gordon} and \eqref{eqn:JdminProof1}, there exists a strictly positive constant $c$ such that $\d - \Phi(\gamma(s)) > c$ for any $s \in [a,b]$.
	Hence, $\bar{\gamma} \in \mathcal{C}_\d$ and 
	\begin{equation}
	\label{eqn:JdminProof2}
	\lim_{n \to \infty} \int_a^b \chi_\d(\Phi(\gamma_n)) ds
	= 
	\int_a^b
	\chi_\d(\Phi(\bar\gamma))ds.
	\end{equation}
	The uniform convergence of $\gamma_n$ to $\gamma$ leads to 
	\begin{equation}
	\label{eqn:JdminProof3}
	\lim_{n \to \infty}
	\int_{a}^{b} G(\gamma_n,\dot\gamma_n) ds
	= 
	\lim_{n \to \infty}
	\int_{a}^{b} G(\bar\gamma,\dot\gamma_n) ds.
	\end{equation}
	Moreover, since we are on a compact subset of $\M$
	\begin{equation}
	\xi \in L^2([a,b],\R^N) \to \int_{a}^{b} G(\bar\gamma(s),\xi(s)) ds
	\end{equation}
	is continuous with respect the strong topology of $L^2$.
	By the convexity of $G$, the above functional is convex.
	As a consequence, it is also lower semicontinuous with respect to the weak topology 
	and since $\dot\gamma_n$ weakly converges to $\dot{\bar\gamma}$ we obtain that
	\begin{equation}
	\label{eqn:JdminProof4}
	\liminf_{n \to \infty} 
	\int_{a}^{b} G(\bar\gamma,\dot\gamma_n) ds
	\ge 
	\int_{a}^{b} G(\bar\gamma,\dot{\bar\gamma}) ds.
	\end{equation}
	Finally, by 
	\eqref{eqn:JdminProof2},
	\eqref{eqn:JdminProof3} and
	\eqref{eqn:JdminProof4}
	we obtain
	\begin{multline}
	\inf_{\gamma \in \mathcal{C}_\d}J_\d(\gamma) 
	= \lim_{n \to \infty} \J_\d(\gamma_n)
	= \lim_{n \to \infty} 
	\left(
	\frac{1}{2}\int_{a}^{b} G(\gamma_n,\dot\gamma_n)ds 
	+
	\int_a^b \chi_\d(\Phi(\gamma_n)) ds
	\right)
	\\
	= \liminf_{n \to \infty}
	\left(
	\frac{1}{2}\int_{a}^{b} G(\bar\gamma,\dot{\gamma}_n)ds 
	\right)
	+
	\int_a^b \chi_\d(\Phi(\bar\gamma)) ds
	\\
	\ge
	\frac{1}{2}\int_{a}^{b} G(\bar\gamma,\dot{\bar\gamma})ds 
	+
	\int_a^b \chi_\d(\Phi(\bar\gamma))ds
	= \J_\d(\bar\gamma).
	\end{multline}
\end{proof}

\begin{lemma}
	\label{lem:regCritJd}
	For any ${\d \in (0,\d_0)}$, let $\gamma_\d$ be a critical point of $\J_\d$ in $\mathcal{C}_\d$.
	Then $\gamma_\d$ is $C^1$ and, for any $\bar{s} \in [a,b]$ such that $\dot\gamma(\bar{s}) \ne 0$, there exists a neighbourhood $I$ of $\bar{s}$ such that $\gamma_\d$ is twice differentiable in $I$ and there it satisfies the equations
	\begin{equation}
	\label{eqn:eqGeoPen}
	\ddot\gamma_\d^i + \Gamma_{jk}^i(\gamma_{\d},\dot\gamma_\d)
	\dot\gamma_\d^j \dot\gamma_\d^k =
	\chi_\d'(\Phi(\gamma))
	\p_{q^k}\Phi(\gamma_\d)
	g^{ki}(\gamma_{\d},\dot\gamma_\d).
	\end{equation}
	Moreover, a constant $E_\d \in \R$ exists such that
	\begin{equation}
	\label{eqn:conEnerDelta}
	E_\d = \frac{1}{2}G(\gamma_\d,\dot\gamma_\d) - \chi_\d(\Phi(\gamma_\d))
	\quad\text{ on }[a,b].
	\end{equation}
\end{lemma}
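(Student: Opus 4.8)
The plan is the following. Since $\gamma_\d([a,b])$ is compact and contained in the open set $\Omega_\d$, there is $c\in(0,\d)$ with $\Phi(\gamma_\d(s))\le \d-c$ for every $s$; hence $\mathcal{C}_\d$ is an open subset of the (affine) Hilbert manifold of $H^{1,2}$-curves joining $p$ to $q$, and $d\J_\d(\gamma_\d)$ annihilates every $\xi\in T_{\gamma_\d}\M$ with $\xi(a)=\xi(b)=0$. As the assertions are local, I would work in a chart of $\M$ around an arbitrary point of the image: testing against variations supported in a small interval and arbitrary in $\R^N$ gives the weak Euler--Lagrange identity
$$
\int_a^b\Big(\tfrac12 d_qG(\gamma_\d,\dot\gamma_\d)[\xi]+\tfrac12 d_vG(\gamma_\d,\dot\gamma_\d)[\dot\xi]+\chi_\d'(\Phi(\gamma_\d))\,d\Phi(\gamma_\d)[\xi]\Big)\,ds=0 .
$$
Using $\gamma_\d\in H^{1,2}\subset C^0$, the growth bounds \eqref{eqn:boundG} and \eqref{eqn:boundDqG}, and the boundedness of $\chi_\d$ and $\chi_\d'$ on $(-\infty,\d-c]$, the covector field $f:=\tfrac12 d_qG(\gamma_\d,\dot\gamma_\d)+\chi_\d'(\Phi(\gamma_\d))\nabla\Phi(\gamma_\d)$ lies in $L^1$ and $w:=\tfrac12 d_vG(\gamma_\d,\dot\gamma_\d)$ lies in $L^2$; by the du Bois--Reymond lemma $w$ coincides a.e.\ with an absolutely continuous function (still written $w$) with $\dot w=f$ a.e. In particular $w\in C^0([a,b])$.

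For the $C^1$-regularity I would invoke the Legendre-type map $\Lambda:(q,v)\mapsto\big(q,\tfrac12 d_vG(q,v)\big)$. Since $G(q,\cdot)=F^2(q,\cdot)$ is $C^1$, coercive by \eqref{eqn:boundG} and strictly convex on $T_q\M$ (a consequence of the positive definiteness of $g_v$), $\Lambda$ is a homeomorphism of $T\M$, and, by \eqref{eqn:defAlpha}, a $C^1$-diffeomorphism away from the zero section. From $\dot\gamma_\d(s)=\mathrm{pr}_2\big(\Lambda^{-1}(\gamma_\d(s),w(s))\big)$ a.e.\ and the continuity of $\gamma_\d$ and $w$ it follows that $\dot\gamma_\d$ agrees a.e.\ with a continuous function, i.e.\ $\gamma_\d\in C^1([a,b],\M)$. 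Now fix $\bar s$ with $\dot\gamma_\d(\bar s)\ne0$ and a neighbourhood $I$ of $\bar s$ on which $\dot\gamma_\d\ne0$; there $f$ is continuous, so $w\in C^1(I)$, and since $\Lambda^{-1}$ is $C^1$ near $(\gamma_\d(\bar s),w(\bar s))$ we get $\dot\gamma_\d\in C^1(I)$, that is $\gamma_\d\in C^2(I)$. Differentiating $w_i=\tfrac12\p_{v^i}G(\gamma_\d,\dot\gamma_\d)$ on $I$ gives $g_{ij}(\gamma_\d,\dot\gamma_\d)\ddot\gamma_\d^j=f_i-\tfrac12\p^2_{q^jv^i}G(\gamma_\d,\dot\gamma_\d)\dot\gamma_\d^j$; contracting with $g^{ki}$ and using $\tfrac12 g^{ki}\big(\p_{q^i}G-\p^2_{q^jv^i}G\,\dot\gamma_\d^j\big)=-\Gamma^k_{ij}(\gamma_\d,\dot\gamma_\d)\dot\gamma_\d^i\dot\gamma_\d^j$ (the same manipulation that turns \eqref{eqn:geodesicEquationsPre} into \eqref{eqn:geodesicEquationsChern}) yields precisely \eqref{eqn:eqGeoPen}.

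To obtain \eqref{eqn:conEnerDelta} I would first upgrade $\gamma_\d$ to class $C^{1,1}$. On $U:=\{s:\dot\gamma_\d(s)\ne0\}$ the coefficients $\Gamma^k_{ij}(q,\cdot)$ and $g^{ki}(q,\cdot)$ are $0$-homogeneous and continuous on the compact set $\overline{\Omega}_\d\times S^{N-1}$, and $|\chi_\d'(\Phi(\gamma_\d))|$ is bounded; hence \eqref{eqn:eqGeoPen} gives $\|\ddot\gamma_\d\|\le C_1\|\dot\gamma_\d\|^2+C_2\le M$ on $U$, with $M$ depending only on $\d$, $c$, $K_0$, $\alpha$ and $\max_{[a,b]}\|\dot\gamma_\d\|<\infty$. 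Since $\dot\gamma_\d$ is continuous on $[a,b]$, vanishes on the closed set $Z:=[a,b]\setminus U$, and is $C^1$ with $\|\ddot\gamma_\d\|\le M$ on $U$, an elementary estimate (for $s<t$, compare with the first and last points of $[s,t]\cap Z$, where $\dot\gamma_\d=0$) shows $\dot\gamma_\d$ is $M$-Lipschitz on $[a,b]$, so $\ddot\gamma_\d\in L^\infty([a,b],\R^N)$. Finally I would test the weak Euler--Lagrange identity with $\xi=\zeta\dot\gamma_\d$, $\zeta\in C^\infty_c((a,b))$, which is now admissible because $\dot\gamma_\d\in W^{1,\infty}$: using the $2$-homogeneity identity $d_vG(q,v)[v]=2G(q,v)$ and $\chi_\d'(\Phi(\gamma_\d))\langle\nabla\Phi(\gamma_\d),\dot\gamma_\d\rangle=\tfrac{d}{ds}\chi_\d(\Phi(\gamma_\d))$, and integrating by parts the term containing $\dot\zeta$, the identity collapses to $\int_a^b\zeta\,\tfrac{d}{ds}\big(\tfrac12 G(\gamma_\d,\dot\gamma_\d)-\chi_\d(\Phi(\gamma_\d))\big)\,ds=0$ for all such $\zeta$. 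Hence the absolutely continuous function $s\mapsto\tfrac12 G(\gamma_\d,\dot\gamma_\d)-\chi_\d(\Phi(\gamma_\d))$ has a.e.\ vanishing derivative, so it equals a constant $E_\d$, which is \eqref{eqn:conEnerDelta}.

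I expect the main obstacle to be exactly the behaviour on $Z=\{\dot\gamma_\d=0\}$: there $G=F^2$ is only $C^1$ (not $C^3$), so neither the second-order regularity of $\gamma_\d$ nor the pointwise chain rule for $s\mapsto G(\gamma_\d(s),\dot\gamma_\d(s))$ is available a priori, and one must push the $C^{1,1}$-bound across $Z$ by exploiting the $0$-homogeneity, hence boundedness, of the connection and metric coefficients appearing in \eqref{eqn:eqGeoPen}. The two remaining ingredients, the du Bois--Reymond lemma and the Legendre homeomorphism $\Lambda$, are standard.
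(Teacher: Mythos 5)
The paper does not prove this lemma itself; it simply refers to \cite[Lemma 4.1]{Bartolo2011}. Your argument is correct and is essentially a self-contained reconstruction of the standard proof used there: openness of $\mathcal{C}_\d$ (so all $H^{1,2}_0$ variations are admissible), the du Bois--Reymond lemma giving absolute continuity of $\tfrac12 d_vG(\gamma_\d,\dot\gamma_\d)$, the fiberwise Legendre homeomorphism to recover $C^1$-regularity and then $C^2$-regularity away from the zero section, and the weak formulation tested with $\zeta\dot\gamma_\d$ for the conservation law. You also correctly isolate and resolve the only delicate points, namely pushing the Lipschitz bound on $\dot\gamma_\d$ across the set $\{\dot\gamma_\d=0\}$ (where $G$ is not $C^2$) via the $0$-homogeneity of $\Gamma^i_{jk}$ and $g^{ki}$, and the a.e.\ validity of the chain rule for $s\mapsto G(\gamma_\d(s),\dot\gamma_\d(s))$ on that set.
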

The proof of this lemma can be found in \cite[Lemma 4.1]{Bartolo2011}.
\begin{lemma}
	\label{lem:Jdlimitato}
	For any $\d \in (0,\d_0)$, let $\gamma_\d$ be a minimum of $\J_\d$ on $\mathcal{C}_\d$.
	Then there exists a constant $k_1 > 0$ such that 
	\begin{equation}
	\label{eqn:Jdlimitato}
	\J_\d(\gamma_\d) \le k_1 < +\infty ,
	\quad \forall \d \in (0,\d_0),
	\end{equation}
	and
	\begin{equation}
	\label{eqn:Gdbounded}
	\frac{1}{2} G(\gamma_\d(s),\dot\gamma_\d(s)) \le \frac{k_1}{b-a} + \chi_\d(\Phi(\gamma_\d(s)))
	\quad
	\text{ for all $\d \in (0,\d_0)$ and $s \in [a,b]$}.
	\end{equation}
\end{lemma}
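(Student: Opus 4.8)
The plan is to establish the two estimates in order: first the uniform energy bound \eqref{eqn:Jdlimitato}, by comparing the minimizer $\gamma_\d$ with a single fixed curve on which the penalization term vanishes, and then the pointwise bound \eqref{eqn:Gdbounded}, by inserting \eqref{eqn:Jdlimitato} into the energy conservation identity \eqref{eqn:conEnerDelta} supplied by Lemma \ref{lem:regCritJd}.

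For the first step, I would fix once and for all an $H^{1,2}$ curve $\sigma:[a,b]\to\oOmega$ with $\sigma(a)=p$ and $\sigma(b)=q$; such a curve exists because $\oOmega$ is a connected manifold with boundary (one may take a continuous path joining $p$ to $q$ in $\oOmega$ and make it $H^{1,2}$ by a broken-geodesic approximation, exactly as in the proof of Lemma \ref{lemma:chordsDef}; when $p,q\in\p\Omega$ one may simply reparametrize $\gamma(p,q)$ affinely onto $[a,b]$). Since $\oOmega\subset\Omega_\d$ for every $\d\in(0,\d_0)$, this same $\sigma$ belongs to $\mathcal{C}_\d$ for all such $\d$; moreover $\Phi(\sigma(s))\le 0$, so $\chi_\d(\Phi(\sigma(s)))=0$ by \eqref{eqn:defChi}, and therefore $\J_\d(\sigma)=\J^{a,b}(\sigma)$ is a finite quantity not depending on $\d$. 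Setting $k_1:=\max\{1,\J^{a,b}(\sigma)\}$ and using that $\gamma_\d$ minimizes $\J_\d$ on $\mathcal{C}_\d$, we get $\J_\d(\gamma_\d)\le\J_\d(\sigma)\le k_1$, which is \eqref{eqn:Jdlimitato}.

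For the second step, Lemma \ref{lem:regCritJd} applied to the minimizer $\gamma_\d$ produces a constant $E_\d$ with $\tfrac12 G(\gamma_\d,\dot\gamma_\d)=E_\d+\chi_\d(\Phi(\gamma_\d))$ on all of $[a,b]$. Integrating this identity over $[a,b]$ and recalling the definition of $\J_\d$ gives $\J_\d(\gamma_\d)=E_\d(b-a)+2\int_a^b\chi_\d(\Phi(\gamma_\d))\,ds$; since $\chi_\d\ge 0$, this forces $E_\d(b-a)\le\J_\d(\gamma_\d)\le k_1$, that is, $E_\d\le k_1/(b-a)$. Substituting this back into the conservation identity yields, for every $s\in[a,b]$, $\tfrac12 G(\gamma_\d(s),\dot\gamma_\d(s))\le k_1/(b-a)+\chi_\d(\Phi(\gamma_\d(s)))$, which is \eqref{eqn:Gdbounded}.

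I do not expect a genuine obstacle here; the only points requiring care are that $k_1$ must be chosen independently of $\d$ — which is precisely why the comparison curve is taken in $\oOmega$, where $\chi_\d\equiv 0$, rather than in the larger set $\Omega_\d$ — and that the identity \eqref{eqn:conEnerDelta} must be used in its global form on all of $[a,b]$, including the points where $\dot\gamma_\d$ vanishes, so that the resulting pointwise bound holds everywhere.
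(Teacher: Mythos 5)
Your proof is correct and follows essentially the same route as the paper: comparison with a fixed curve in $\mathcal{C}([a,b],p,q,\oOmega)$, on which the penalization term vanishes, yields the $\d$-independent bound \eqref{eqn:Jdlimitato}, and integrating the conservation identity \eqref{eqn:conEnerDelta} to bound $E_\d$ gives \eqref{eqn:Gdbounded}. No gaps.
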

\begin{proof}
	Let $\bar\gamma$ be a curve in $\mathcal{C}([a,b],p,q,\oOmega)$.
	Then $\bar\gamma$ is in $\mathcal{C}_\d$ for any $\delta \in (0,\d_0)$, being $\oOmega \subset \Omega_\d$. 
	Since $\Phi(\bar\gamma(s)) \le 0$ for any $s \in [a,b]$, 
	we set 
	\begin{equation}
	k_1 = \J_\d(\bar\gamma) = 
	\frac{1}{2}\int_{a}^{b}	G(\bar\gamma,\dot{\bar\gamma}) ds 
	+ \int_{a}^{b} \chi_\d(\Phi(\bar{\gamma})) ds 
	= 
	\frac{1}{2}
	\int_{a}^{b}	G(\bar\gamma,\dot{\bar\gamma}) ds < + \infty,
	\end{equation}
	and \eqref{eqn:Jdlimitato} follows.
	Since $\gamma_{\d}$ is a minimum point of $\J_\d$, it is a critical point of $\J_\d$ and by Lemma \ref{lem:regCritJd} there exists a constant $E_\d$ such that \eqref{eqn:conEnerDelta} holds.
	Using $\eqref{eqn:Jdlimitato}$ we get
	\begin{multline}
	\int_{a}^{b}E_\d\ ds = 
	\int_a^b
	\left( 
	\frac{1}{2}G(\gamma_\d,\dot\gamma_\d) - 
	\chi_\d(\Phi(\gamma_\d))
	\right) ds \\ = 
	\J_\d(\gamma_\d) - 2\int_{a}^{b} \chi_\d(\Phi(\gamma_\d))\ ds
	\le k_1 - 2 \int_{a}^{b}\chi_\d(\Phi(\gamma_\d))\ ds \le k_1.
	\end{multline}
	Thus
	\begin{equation}
	E_\d \le \frac{k_1}{b-a}, \quad \text{for all } \d \in (0,\d_0).
	\end{equation}
	and \eqref{eqn:Gdbounded} holds.
\end{proof}
%
\begin{lemma}
	\label{lem:lambdaLinf}
	Let $(\gamma_{\d})_{\d \in (0,\d_0)}$ be a family in $H^{1,2}([a,b],\M)$ such that, for any $\d \in (0,\d_0)$, $\gamma_{\d}$ is a minimum of $\J_{\d}$ on $\mathcal{C}([a,b],p,q,\Omega_\d)$.
	For any $\d \in (0,\d_0)$, set
	\begin{equation}
	\label{eqn:lambdadDef}
	\lambda_\d(s) = - \chi_\d'(\Phi(\gamma_\d(s))), \quad s \in [a,b].
	\end{equation}
	Then there exists a $\d_1 \in (0,\d_0)$ such that 
	\begin{equation}
	\label{eqn:lambdaBound}
	\sup_{\d \in (0,\d_1)}
	\norm{\lambda_\d}_\infty = \sup_{\d \in (0,\d_1)} \max_{s \in [a,b]} |\lambda_\d(s)| < +\infty,
	\end{equation}
\end{lemma}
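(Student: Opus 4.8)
The plan is to bound the penalization multiplier $-\lambda_\d = \chi_\d'(\Phi(\gamma_\d))$ pointwise by testing the second derivative of $\varphi_\d := \Phi\circ\gamma_\d$ at a maximum point of $\varphi_\d$, and then to turn that bound into the desired $L^\infty$-estimate via the explicit formula for $\chi_\d'$. First I would collect the a priori bounds I need. Since $p\in\oOmega$ we have $\varphi_\d(a)=\Phi(p)\le 0$, hence $\chi_\d(\varphi_\d(a))=0$, so the energy identity \eqref{eqn:conEnerDelta} forces $E_\d=\tfrac12 G(\gamma_\d(a),\dot\gamma_\d(a))\ge 0$; combined with Lemma \ref{lem:Jdlimitato} this gives $0\le E_\d\le k_1/(b-a)$ for all $\d\in(0,\d_0)$, and then, by \eqref{eqn:conEnerDelta} and \eqref{eqn:boundG},
\[
\norm{\dot\gamma_\d(s)}^2 \le \ell\, G(\gamma_\d(s),\dot\gamma_\d(s)) = 2\ell E_\d + 2\ell\,\chi_\d(\varphi_\d(s)) \le \frac{2\ell k_1}{b-a} + 2\ell\,\chi_\d(\varphi_\d(s)).
\]
I would also fix two constants depending only on $\Phi$, $F$ and $\d_0$: a constant $C_H>0$ with $|H_\Phi(q,v)[v,v]|\le C_H\norm{v}^2$ for $q\in\oOmega_{\d_0}$ (by the degree-two homogeneity in $v$ of the right-hand side of \eqref{eqn:FinHesLocCoo} and compactness of the sphere bundle over $\oOmega_{\d_0}$), and a constant $\beta>0$ with $g^{ij}(q,v)\,\p_{q^i}\Phi(q)\,\p_{q^j}\Phi(q)\ge\beta$ for all $q\in\Phi^{-1}([0,\d_0])$ and $v\ne 0$ (from the uniform lower bound $2/\ell$ on the eigenvalues of $(g^{ij})$ implied by \eqref{eqn:boundDeriv2vv}, together with $\norm{\nabla\Phi}$ being bounded away from $0$ on the compact set $\Phi^{-1}([0,\d_0])$, cf.\ \eqref{eqn:delta0Def}).

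The core step runs as follows. Set $\mu:=\max_{[a,b]}\varphi_\d$; if $\mu\le 0$ then $\chi_\d'\equiv 0$ on the range of $\varphi_\d$ and $\lambda_\d\equiv 0$, so assume $\mu>0$. Since $\varphi_\d(a)=\Phi(p)\le 0$ and $\varphi_\d(b)=\Phi(q)\le 0$, the maximum is attained at some interior $s^*\in(a,b)$; the displayed energy bound gives $G(\gamma_\d(s^*),\dot\gamma_\d(s^*))\ge 2\chi_\d(\mu)>0$, so $\dot\gamma_\d(s^*)\ne 0$ and, by Lemma \ref{lem:regCritJd}, $\gamma_\d$ is twice differentiable near $s^*$ and satisfies \eqref{eqn:eqGeoPen} there. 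Differentiating $\varphi_\d$ twice, substituting \eqref{eqn:eqGeoPen} and comparing with \eqref{eqn:FinHesLocCoo} yields, near $s^*$,
\[
\ddot\varphi_\d = H_\Phi(\gamma_\d,\dot\gamma_\d)[\dot\gamma_\d,\dot\gamma_\d] + \chi_\d'(\varphi_\d)\, g^{ij}(\gamma_\d,\dot\gamma_\d)\,\p_{q^i}\Phi(\gamma_\d)\,\p_{q^j}\Phi(\gamma_\d).
\]
As $s^*$ is an interior maximum, $\ddot\varphi_\d(s^*)\le 0$; as $\varphi_\d(s^*)=\mu\in(0,\d)\subset(0,\d_0)$, the point $\gamma_\d(s^*)$ lies in $\Phi^{-1}([0,\d_0])$. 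Evaluating the identity at $s^*$ and using $\chi_\d'(\mu)\ge 0$, the lower bound $\beta$ and the bounds of the first paragraph,
\[
\beta\,\chi_\d'(\mu) \le -H_\Phi(\gamma_\d(s^*),\dot\gamma_\d(s^*))[\dot\gamma_\d(s^*),\dot\gamma_\d(s^*)] \le C_H\norm{\dot\gamma_\d(s^*)}^2 \le C_H\Bigl(\frac{2\ell k_1}{b-a}+2\ell\,\chi_\d(\mu)\Bigr).
\]
Now I would substitute $\chi_\d'(\mu)=2\d\mu/(\d-\mu)^3$ and $\chi_\d(\mu)=\mu^2/(\d-\mu)^2$ (see \eqref{eqn:defChi}), multiply through by $(\d-\mu)^3/2$, and use the crude bounds $\d-\mu\le\d$ and $\mu\le\d$ to get $\beta\,\d\mu\le C_H\ell\bigl(\tfrac{k_1}{b-a}+1\bigr)\d^3$, i.e.\ $\mu\le(A/\beta)\,\d^2$ with $A:=C_H\ell\bigl(\tfrac{k_1}{b-a}+1\bigr)$ independent of $\d$.

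To conclude, choose $\d_1\in(0,\d_0)$ with $(A/\beta)\,\d_1\le\tfrac12$. For $\d\in(0,\d_1)$ the estimate $\mu\le(A/\beta)\d^2$ gives $\d-\mu\ge\d(1-(A/\beta)\d)\ge\d/2$, and since $\chi_\d'$ is nonnegative and nondecreasing on $(-\infty,\d)$ while $\varphi_\d\le\mu$ everywhere,
\[
-\lambda_\d(s) = \chi_\d'(\varphi_\d(s)) \le \chi_\d'(\mu) = \frac{2\d\mu}{(\d-\mu)^3} \le \frac{2\d\cdot(A/\beta)\d^2}{(\d/2)^3} = \frac{16A}{\beta}, \qquad s\in[a,b].
\]
Since $\lambda_\d\le 0$, this is the uniform bound $\norm{\lambda_\d}_\infty\le 16A/\beta$ for $\d\in(0,\d_1)$, i.e.\ \eqref{eqn:lambdaBound}.

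I expect the core step to be the main obstacle. A naive argument would only show that $\varphi_\d$ stays below $\d$, or at best below a fixed constant $<\d$, which is not enough because $\chi_\d'$ blows up near $t=\d$ at rate $(\d-t)^{-3}$ as $\d\to 0$. What makes the argument close is that the maximum-principle inequality becomes \emph{self-improving} once the $\norm{\dot\gamma_\d}^2$ term has been rewritten, through energy conservation, in terms of $\chi_\d(\mu)$: it forces $\mu$ to be as small as $O(\d^2)$, and this quadratic smallness is precisely what defeats the cubic blow-up of $\chi_\d'$. A secondary point to be careful about is that the identity for $\ddot\varphi_\d$ holds only where $\dot\gamma_\d\ne 0$, which is why one must check that $\dot\gamma_\d(s^*)\ne 0$ as soon as $\mu>0$.
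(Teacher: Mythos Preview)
Your proof is correct and follows the same core strategy as the paper---test $\ddot\varphi_\d\le 0$ at an interior maximum of $\varphi_\d=\Phi\circ\gamma_\d$, substitute the penalized geodesic equation \eqref{eqn:eqGeoPen}, and close via the explicit form of $\chi_\d$---but it is more elementary in one key respect. The paper must split into two cases according to whether $\dot\gamma_\d(s_\d)=0$ at the maximum point; its second case (velocity vanishing) requires showing that the zero set of $\dot\gamma_\d$ has empty interior and then approximating $s_\d$ by sequences where the second-order test can be performed. You bypass this entirely by noting that $p\in\oOmega$ gives $\chi_\d(\Phi(p))=0$, hence $E_\d=\tfrac12 G(\gamma_\d(a),\dot\gamma_\d(a))\ge 0$, so at the maximum $G(\gamma_\d(s^*),\dot\gamma_\d(s^*))=2E_\d+2\chi_\d(\mu)\ge 2\chi_\d(\mu)>0$ whenever $\mu>0$; thus $\dot\gamma_\d(s^*)\ne 0$ automatically and only the first case arises. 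Your algebraic endgame also differs: after reaching the inequality $\beta\chi_\d'(\mu)\le C(1+\chi_\d(\mu))$ (the paper's \eqref{eqn:lambdaBoundProof1}), the paper rewrites $\chi_\d'$ in terms of $\chi_\d$ via \eqref{eqn:chiDeriv} and bounds $\chi_\d(\mu)$ first, whereas you substitute the explicit formulas and extract $\mu=O(\d^2)$ directly before bounding $\chi_\d'(\mu)$. Both closures are equivalent; yours makes more transparent why the estimate works (quadratic smallness of $\mu$ defeats the cubic blow-up of $\chi_\d'$).
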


\begin{proof}
	For any $\d \in (0,\d_0)$, set $\rho_\d(s) = \Phi(\gamma_\d(s))$ and let $s_\d$ be a maximum point for 
	$\rho_\d$.
	Since the derivative of $\chi_\d$ is non-decreasing and $\chi'_\d(t) = 0$ for any $t \le 0$, then
	$$
	0 
	\le 	\chi'_\d(\Phi(\gamma_\d(s)))
	\le \chi'_\d(\Phi(\gamma_\d(s_\d))), \quad \forall s \in [0,1].
	$$
	Thus, it suffices to prove \eqref{eqn:lambdaBound} assuming that
	$$
	\Phi(\gamma_\d(s_\d)) \in ]0,\delta[.
	$$
	Due to the lack of regularity of $G$ on the zero section, we must distinguish the two cases in which $\dot\gamma_\d(s_\d)$ is equal to zero or not.
	In both cases, we will prove the existence of a constant $L > 0$ such that
	\begin{equation}
	\label{eqn:lambdaBoundProof1}
	\chi'_\d(\Phi(\gamma(s_\d)))
	\le
	L \bigg( 1 + \chi_\d(\Phi(\gamma(s_\d)))\bigg),
	\quad
	\text{for any $\d \in (0,\d_0)$,}
	\end{equation}
	from which we infer the thesis.
	Indeed, by \eqref{eqn:chiDeriv} we obtain
	\begin{equation}
	\left(
	\frac{2\d}{\Phi(\gamma_\d(s_\d)) (\d - \Phi(\gamma_\d(s_\d)))}
	- L
	\right) 
	\chi_\d(\Phi(\gamma(s_\d)))
	\le L.
	\end{equation} 
	Since 
	$$
	\inf_{t \in (0,\d)}\frac{2\d}{(t(\d - t))}=  \frac{8}{\d},
	$$
	then setting $\d_1 = 8/L$ we obtain \eqref{eqn:lambdaBound}.
	
	\noindent
	\textit{First case}: Let $s_\d \in A_\d = \{s \in [a,b]: \dot\gamma_\d(s) \ne 0\}$.
	By Lemma \ref{lem:regCritJd}, $\gamma_\d$ is twice differentiable in a neighbourhood of $s_\d$, and being a maximum point of $\rho_\d$ we get that
	$\dot\rho_\d(s_\d) = 0$ and 
	\begin{equation}
	\ddot\rho_\d(s_\d) = 
	{\p^2}_{ q^i q^j}\Phi(\gamma_\d(s_\d))
	\dot\gamma_\d^i \dot\gamma_\d^j
	+
	\p_{q^i} \Phi(\gamma_\d(s_\d))\ddot\gamma_\d^i(s_\d)
	\le 0.
	\end{equation}
	By \eqref{eqn:eqGeoPen} we obtain
	\begin{multline}
	{\p^2}_{ q^i q^j}\Phi(\gamma_\d)
	\dot\gamma_\d^i \dot\gamma_\d^j
	-
	\p_{q^i} \Phi(\gamma_\d)
	\Gamma_{jk}^i(\gamma_\d, \dot\gamma_\d)\dot\gamma_\d^j \dot\gamma_\d^k
	\\
	+
	\chi'_\d(\gamma_\d)
	\p_{q^i} \Phi(\gamma_\d)
	\p_{q^j} \Phi(\gamma_\d)
	g^{ij}(\gamma_\d,\dot\gamma_\d)
	\le 0, 
	\end{multline}
	having omitted the dependency on $s_\d$ for the sake of presentation.
	Since $\overline\Omega_{\d_0}$ is a compact domain and the components of the Chern connection $\Gamma_{jk}^i$ are fiberwise positively homogeneous of degree zero, there exists a constant $c_1 > 0$ such that
	\begin{equation}
	- c_1 G(\gamma_\d,\dot\gamma_\d) 
	\le
	{\p^2}_{ q^i q^j}\Phi(\gamma_\d)
	\dot\gamma_\d^i \dot\gamma_\d^j
	-
	\p_{q^i} \Phi(\gamma_\d)
	\Gamma_{jk}^i(\gamma_\d, \dot\gamma_\d)\dot\gamma_\d^i \dot\gamma_\d^j.
	\end{equation}	
	Moreover, since $d\Phi(q) \ne 0$ for any $q \in \Phi^{-1}([0,\d_0])$ and the matrix $[g^{ij}(q,v)]$ is positive definite for any $v \ne 0$, there exists a constant $c_2 > 0$ such that
	\begin{equation}
	c_2 
	<
	\p_{q^i} \Phi(\gamma_\d)
	\p_{q^j} \Phi(\gamma_\d)
	g^{ij}(\gamma_\d,\dot\gamma_\d).
	\end{equation}
	Hence we obtain 
	\begin{equation}
	- c_1 G(\gamma_\d, \dot\gamma_\d)
	+
	c_2
	\chi'_\d(\gamma_\d)
	\le 0,
	\end{equation}
	and by \eqref{eqn:Gdbounded} we get the existence of a constant $L >0 $ such that \eqref{eqn:lambdaBoundProof1} holds.
	
	\noindent
	\textit{Second case}:
	Let $s_\d \in B_\d = [a,b]\setminus A_\d$.
	Firstly, we show that the interior of $B_\d$ is empty.
	Arguing by contradiction, let $I(s_\d)$ a neighbourhood of $s_\d$ such that $I(s_\d) \subset B_\d$.
	Take a vector field $\xi \in T_{\gamma_\d}\M$, with support in $I_\d$, such that $d\Phi(\gamma_{\d}(s_\d))[\xi(s_\d)] < 0$
	and $d\Phi(\gamma_{\d}(s))[\xi(s)] \le 0$ for any $s \in I(s_\d)$.
	Since $\dot\gamma_\d(s) = 0$ on $I(s_\d)$,
	then
	$$
	d_qG(\gamma_\d(s),\dot\gamma_\d(s)) = d_vG(\gamma_\d(s),\dot\gamma_\d(s)) = 0,
	\quad \text{for any $ s \in I(s_\d)$}.
	$$
	Moreover, we can choose the neighbourhood $I_\d$ such that $\Phi(\gamma_{\d}(s)) > 0$ for any $s \in I_\d$, so $\chi'_\d(\gamma_{\d}) > 0$ in $I_\d$.
	As a consequence, being $\gamma_\d$ a critical point for $\J_\d$, we get
	\begin{multline}
	0 = d\J_\d(\gamma_\d)[\xi] = 
	\frac{1}{2}\int_{I(s_\d)}
	\left(
	d_qG(\gamma_\d,\dot\gamma_\d)[\xi]
	+
	d_vG(\gamma_\d,\dot\gamma_\d)[\dot\xi]
	\right)ds \\
	+  \int_{I(s_\d)}
	\chi'_\d(\gamma_\d)d\Phi(\gamma_\d)[\xi]ds
	=
	\int_{I(s_\d)}
	\chi'_\d(\gamma_\d)d\Phi(\gamma_\d)[\xi]ds < 0,
	\end{multline}
	which is an absurd.
	Now, since the interior of $B_\d$ is empty, we must distinguish two cases.
	\begin{itemize}
		\item[(i)] $s_\d$ is an isolated point of $B_\d$.
		Then, recalling that $\rho_\d \in C^1$ and that $s_\d$ is a maximum point for $\rho_\d$, $\dot\rho_\d(s_\d) = 0$. Moreover, there exists a neighbourhood $I_\d$ of $s_\d$ such that 
		$I_\d \cap B_\d = \{s_\d \}$ and, for any $s \in I_\d$, $\dot\rho_\d(s) \ge 0$ for $s < s_\d$.
		Applying the mean value theorem, we can construct a sequence $(s_{\d,n})_n$ that converges to $s_\d$ from below and such that $\ddot\rho_\d(s_{\d,n}) \le 0$.
		Following the same procedure of the first case, we obtain that 
		\begin{equation}
		\chi'_\d(\Phi(\gamma(s_{\d,n})))
		\le
		L \bigg( 1 + \chi_\d(\Phi(\gamma(s_{\d,n})))\bigg)
		,
		\end{equation}
		and passing to the limit we obtain \eqref{eqn:lambdaBoundProof1} for $s_\d$.
		
		\item[(ii)] $s_\d$ is an accumulation point of $B_\d$.
		In this case we can suppose, without loss of generality, that there exists a strictly increasing sequence $(s_{\d,n})_n$ in $B_\d$ that converges to $s_\d$ and such that $(s_{\d,n-1},s_{\d,n}) \subset A_\d$.
		Since $\dot\gamma_\d(s_{\d,n}) = 0$, $\dot\rho_\d(s_{\d,n}) = 0$ for any $n$ and we can apply the Rolle's theorem to construct a sequence $(\bar{s}_{\d,n})_n$ in $A_\d$ such that
		$\bar{s}_{\d,n} \in (s_{\d,n-1},s_{\d,n})$ and $\ddot\rho(\bar{s}_{\d,n}) = 0$.
		Then applying the same reasoning of the first case and passing to the limit as $\bar{s}_{\d,n} \to s_\d$, we obtain
		\eqref{eqn:lambdaBoundProof1} for $s_\d$. 
	\end{itemize}
\end{proof}


\begin{lemma}
	\label{lem:lim-delta}
	Let $(\gamma_{\d})_{\d \in (0,\d_0)}$ be a family in $H^{1,2}([a,b],\M)$ such that, for any $\d \in (0,\d_0)$, $\gamma_{\d}$ is a minimum of $\J_{\d}$ on $\mathcal{C}([a,b],p,q,\Omega_\d)$.
	Then there exists a subsequence $(\d_n)_n$ in $(0,\d_0)$ such that
	\begin{enumerate}
		\item $(\gamma_{\d_n})_n$ strongly converges to a curve $\gamma \in \mathcal{C}([a,b],p,q,\overline{\Omega})$;
		\item the sequence of functions $(\lambda_{\d_n})_n$
		weakly converges to a function $\lambda \in L^2([a,b],\R)$;
		\item if $p \ne q$, the limit curve $\gamma$ is such that $\dot{\gamma}(s) \ne 0$ for all $s \in [a,b]$, it 
		has $H^{2,2}$-regularity on $[a,b]$ and it satisfies a.e.
		\begin{equation}
		\label{eqn:ELboundarydelta}
		d_qG(\gamma, \dot{\gamma}_d) - \frac{d}{ds}d_vG(\gamma_\d, \dot{\gamma}_d) =  \lambda\ \nabla\Phi(\gamma).
		\end{equation}
		\item[(4)] the limit curve $\gamma$ is a minimum of $\J^{a,b}$ on $\mathcal{C}([a,b],p,q,\oOmega)$.
	\end{enumerate}
\end{lemma}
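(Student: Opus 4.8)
The plan is to run the standard penalization-removal scheme --- uniform estimates, compactness, lower semicontinuity --- while extracting from the conservation law \eqref{eqn:conEnerDelta} the fact that the limit curve stays off the zero section when $p\ne q$. First I would record two uniform bounds valid for $\d$ small. By \eqref{eqn:chiDeriv} one has $\chi_\d(t)=\tfrac{t(\d-t)}{2\d}\,\chi'_\d(t)$ on $(0,\d)$, and since $\max_{(0,\d)}\tfrac{t(\d-t)}{2\d}=\tfrac{\d}{8}$, the definition \eqref{eqn:lambdadDef} together with Lemma~\ref{lem:lambdaLinf} give $\sup_{s}\chi_\d(\Phi(\gamma_\d(s)))\le\tfrac{\d}{8}\norm{\lambda_\d}_\infty\to 0$ as $\d\to0$. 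Feeding this into \eqref{eqn:Gdbounded} produces a constant $C_0$ with $G(\gamma_\d,\dot\gamma_\d)\le C_0^2$ on $[a,b]$ for all small $\d$, hence $\norm{\dot\gamma_\d}_\infty\le\sqrt\ell\,C_0$ by \eqref{eqn:boundG}; thus $\{\gamma_\d\}$ is equi-Lipschitz and bounded in $H^{1,2}$, and $\{\lambda_\d\}$ is bounded in $L^\infty\subset L^2$.

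By the Ascoli-Arzel\'a theorem and weak compactness I would extract $\d_n\to0$ along which $\gamma_{\d_n}\to\gamma$ uniformly, $\dot\gamma_{\d_n}\rightharpoonup\dot\gamma$ in $L^2$ and $\lambda_{\d_n}\rightharpoonup\lambda$ in $L^2$; since $\Phi(\gamma_{\d_n}(s))<\d_n$ the limit satisfies $\Phi(\gamma)\le0$, so $\gamma\in\mathcal{C}([a,b],p,q,\oOmega)$. This is (2). For (4), any $\eta\in\mathcal{C}([a,b],p,q,\oOmega)$ lies in every $\mathcal{C}_\d$ with $\chi_\d(\Phi(\eta))\equiv0$, so $\J^{a,b}(\eta)=\J_\d(\eta)\ge\J_\d(\gamma_\d)\ge\J^{a,b}(\gamma_\d)$; letting $n\to\infty$ and using weak lower semicontinuity of $\J^{a,b}$ (convexity of $G$ in $v$, exactly as in the proof of Lemma~\ref{lem:Jdminimum}) yields $\J^{a,b}(\gamma)\le\liminf_n\J^{a,b}(\gamma_{\d_n})\le\J^{a,b}(\eta)$, so $\gamma$ minimizes $\J^{a,b}$ on $\mathcal{C}([a,b],p,q,\oOmega)$; the choice $\eta=\gamma$ also gives $\J^{a,b}(\gamma_{\d_n})\to\J^{a,b}(\gamma)$ and $\int_a^b\chi_{\d_n}(\Phi(\gamma_{\d_n}))\,ds\to0$. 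For (1), replacing the first slot of $G$ by $\gamma$ (licit because $\gamma_{\d_n}\to\gamma$ uniformly and $\norm{\dot\gamma_{\d_n}}_\infty$ is bounded, using \eqref{eqn:boundDqG}) gives $\int_a^bG(\gamma,\dot\gamma_{\d_n})\,ds\to\int_a^bG(\gamma,\dot\gamma)\,ds$; combined with $\dot\gamma_{\d_n}\rightharpoonup\dot\gamma$ and the uniform fibrewise convexity of $G$ (a consequence of \eqref{eqn:defAlpha} and \eqref{eqn:boundG}, the crude bound $G(q,w)\ge\tfrac1\ell\norm{w}^2$ being used on the set $\{\dot\gamma=0\}$, where $G$ is only $C^1$), this forces $\dot\gamma_{\d_n}\to\dot\gamma$ strongly in $L^2$.

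It remains to prove (3), which I regard as the heart of the statement. If $p\ne q$ then $\gamma$ is non-constant, hence $E:=\J^{a,b}(\gamma)/(b-a)>0$. Integrating the conserved-energy identity \eqref{eqn:conEnerDelta} over $[a,b]$ gives $(b-a)E_{\d_n}=\J^{a,b}(\gamma_{\d_n})-\int_a^b\chi_{\d_n}(\Phi(\gamma_{\d_n}))\,ds\to\J^{a,b}(\gamma)$, so $E_{\d_n}\to E$ and $E_{\d_n}\ge E/2$ for $n$ large; as $\chi_{\d_n}\ge0$ this gives $\tfrac12G(\gamma_{\d_n}(s),\dot\gamma_{\d_n}(s))\ge E/2$, whence $\norm{\dot\gamma_{\d_n}(s)}\ge\sqrt{E/\ell}=:c_0>0$ uniformly in $s$ and $n$ by \eqref{eqn:boundG}. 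With $\dot\gamma_{\d_n}$ now bounded away from $0$ and from $\infty$, Lemma~\ref{lem:regCritJd} gives \eqref{eqn:eqGeoPen} on all of $[a,b]$, and there the Christoffel symbols (homogeneous of degree $0$ in $v$), the coefficients $g^{ki}(\gamma_{\d_n},\dot\gamma_{\d_n})$ and $\chi'_{\d_n}(\Phi(\gamma_{\d_n}))=-\lambda_{\d_n}$ are all uniformly bounded on the compact set $\{q\in\oOmega_{\d_0},\ c_0\le\norm v\le C_0\}$; therefore $\norm{\ddot\gamma_{\d_n}}_\infty$ is uniformly bounded, $\{\dot\gamma_{\d_n}\}$ is relatively compact in $C^0$, and along a further subsequence (whose limit must be $\dot\gamma$) $\dot\gamma_{\d_n}\to\dot\gamma$ uniformly; hence $\norm{\dot\gamma(s)}\ge c_0>0$ for every $s$ and $\gamma\in H^{2,\infty}\subset H^{2,2}$. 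Finally I would pass to the limit in the first variation $d\J_{\d_n}(\gamma_{\d_n})[\xi]=0$, which by \eqref{eqn:Jdiff} and \eqref{eqn:lambdadDef} reads
\begin{equation}
\tfrac12\int_a^b\!\big(d_qG(\gamma_{\d_n},\dot\gamma_{\d_n})[\xi]+d_vG(\gamma_{\d_n},\dot\gamma_{\d_n})[\dot\xi]\big)\,ds=\int_a^b\lambda_{\d_n}\sprod{\nabla\Phi(\gamma_{\d_n})}{\xi}\,ds
\end{equation}
for $\xi$ compactly supported in $(a,b)$: the left side passes to the limit by uniform convergence of $\gamma_{\d_n},\dot\gamma_{\d_n}$ inside the region where $G$ is $C^3$, and the right side by weak--strong convergence ($\lambda_{\d_n}\rightharpoonup\lambda$, $\nabla\Phi(\gamma_{\d_n})\to\nabla\Phi(\gamma)$ uniformly); since $\gamma\in H^{2,\infty}$ and $\dot\gamma$ is bounded away from $0$, $s\mapsto d_vG(\gamma,\dot\gamma)$ is absolutely continuous and one integrates by parts to recover \eqref{eqn:ELboundarydelta} a.e.

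The step I expect to be the main obstacle is precisely the uniform lower bound $\norm{\dot\gamma_{\d_n}}\ge c_0>0$. Without it the coefficients of \eqref{eqn:eqGeoPen} are uncontrolled near the zero section, one loses the $H^{2,\infty}$-estimate, and the term $\tfrac{d}{ds}d_vG(\gamma_{\d_n},\dot\gamma_{\d_n})$ cannot be handled in the limit; the whole argument hinges on reading this lower bound off the conserved energies $E_{\d_n}$, which in turn needs $\chi_{\d_n}(\Phi(\gamma_{\d_n}))\to0$ uniformly and the strict positivity $E=\J^{a,b}(\gamma)/(b-a)>0$ coming from $p\ne q$. A secondary, routine but slightly delicate point is the uniform fibrewise convexity of $G$ used for the strong $L^2$-convergence in (1), which must be argued with some care near $v=0$.
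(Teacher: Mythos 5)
Your proof is correct, and for the crux of the lemma --- statement (3) --- it takes a genuinely different route from the paper's. The paper obtains $\dot\gamma\ne 0$ and the regularity of the limit \emph{a posteriori}: it invokes \cite[Proposition 4.6]{Bartolo2011} to get that $\gamma$ is $C^1$ and locally $H^{2,2}$ on the open set $A=\{\dot\gamma\ne0\}$, contracts the Euler--Lagrange equation with $\dot\gamma$ to see that $G(\gamma,\dot\gamma)$ is constant on each connected component of $A$, and concludes $A=[a,b]$ by continuity (a component whose endpoint lies in $[a,b]\setminus A$ would force the constant to be zero). You instead extract a uniform lower bound $\norm{\dot\gamma_{\d_n}}\ge c_0>0$ directly from the conserved energies $E_{\d_n}$ of \eqref{eqn:conEnerDelta}, using $\sup_s\chi_{\d_n}(\Phi(\gamma_{\d_n}(s)))\to 0$ (which follows from Lemma \ref{lem:lambdaLinf} and \eqref{eqn:chiDeriv}, exactly the computation the paper performs for statement (4)) together with $E_{\d_n}\to \J^{a,b}(\gamma)/(b-a)>0$; this keeps the whole approximating sequence away from the zero section, turns \eqref{eqn:eqGeoPen} into an ODE with uniformly bounded coefficients, and lets you pass to the limit with uniform $C^{1,1}$ bounds. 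Your version is more self-contained (no appeal to the external regularity proposition) and yields the stronger $H^{2,\infty}$ conclusion in one stroke; the paper's is shorter at the price of the citation. For statement (1) you also deviate: you deduce strong $L^2$-convergence of $\dot\gamma_{\d_n}$ from convergence of the energies plus a uniform-convexity (Bregman-type) estimate for $G$, whereas the paper runs a Cauchy-type argument on $d\J(\gamma_{\d_n})[V_{n,m}]$ and defers to \cite{CaponioMasiello2011}. Both routes require the same care where the segment joining $\dot\gamma_{\d_n}(s)$ and $\dot\gamma(s)$ crosses the zero section, where \eqref{eqn:defAlpha} is unavailable; there the crude bound $G(q,w)\ge \ell^{-1}\norm{w}^2$ does close the estimate (as in the splitting into the sets $B_n,C_n,D_n$ in the paper's proof of Proposition \ref{prop:PS}), so the ``delicate point'' you flag is indeed only routine. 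Statements (2) and (4) are handled essentially as in the paper.
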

\begin{proof}
	For any $\d \in (0,\d_0)$, $\gamma_{\d}$ is a minimum of $\J_\d$, so \eqref{eqn:Jdlimitato} holds and 
	$$
	\int_{a}^{b}G(\gamma_\d,\dot\gamma_\d)ds \le k_1, \qquad \forall \d \in (0,1).
	$$
	By \eqref{eqn:boundG} and the Ascoli-Arzel\'a theorem, we obtain a decreasing sequence $(\d_n)_n \subset (0,1)$ that converges to $0$ such that $\gamma_{\d_n}$ uniformly converges to a curve $\gamma$ and $\dot\gamma_{\d_n}$ weakly converges to $\dot{\gamma}$.
	Since $\gamma_{\d_n}(s) \subset \Omega_{\d_n}$ for every $n \in \N$ and $\d_n \to 0$, the support of $\gamma$ is in $\overline{\Omega}$.
	To prove the statement (1), it remains to show that $\dot\gamma_{\d_n}$ strongly converges to $\dot{\gamma}$.
	Consider a smooth curve $\omega \in \mathcal{C}([a,b],p,q,\M)$ that approximates $\gamma$
	and set $\xi_n(s) = \text{exp}^{-1}_{\omega(s)}(\gamma_{\d_n}(s))$, where $\text{exp}$ is the exponential map of the Riemannian metric that derives from the euclidean product.
	For every $n,m \in \N$ we set
	$$
	V_{n,m} = d\text{exp}_\omega[\xi_m - \xi_n].
	$$		
	As $(\gamma_{\d_n})_n$ uniformly converges to $0$, also $V_{n,m}$ uniformly converges to $0$ as $n,m \to + \infty$.
	Since $\gamma_{\d_n}$ is a critical point for $\J_{\d_n}$, we obtain
	\begin{equation}
	\label{eqn:dtozeroProof1}
	d\J_{\d_n}(\gamma_{\d_n})[V_{n,m}] 
	=
	d\J(\gamma_{\d_n})[V_{n,m}] 
	+
	\int_{a}^{b}
	\lambda_{\d_n}
	\sprod{\nabla\Phi(\gamma_{\d_n})}{V_{n,m}} \ ds
	= 0.
	\end{equation}
	By Lemma \ref{lem:lambdaLinf}, if $n$ is sufficiently large, then $\lambda_{\d_n}$ is bounded in $L^{\infty}$. 
	Since $V_{n,m}$ uniformly converges to $0$, this implies that the right-hand side of \eqref{eqn:dtozeroProof1} goes to zero, and we obtain
	\begin{equation}
	\label{eqn:dtozeroProof2}
	\lim_{m,n \to \infty}
	d\J(\gamma_{\d_n})[V_{n,m}]  = 0.
	\end{equation}
	From this last equality, we can obtain the strong convergence of $\dot\gamma_{\d_n}$ to $\dot{\gamma}$, up to subsequences,
	following the same procedure exploited in the proof of \cite[Theorem 3.1]{CaponioMasiello2011}.
	
	Statement (2) naturally derives from Lemma \ref{lem:lambdaLinf}.
	Indeed, from the sequence $(\d_n)_n$ obtained in the first statement we can select a subsequence, which we denote again by $(\d_n)_n$, such that $\lambda_{\d_n}$ weakly converges in $L^2([a,b])$.
	
	In order to prove the statement (3), firstly we use \cite[Proposition 4.6]{Bartolo2011} to obtain that $\gamma$
	is $C^1$ and, for any $\bar{s} \in [a,b]$ such that $\dot{\gamma}_\d(\bar{s}) \ne 0$, said $(U,\varphi)$ a chart of $\M$ such that $\gamma_\d(\bar{s})\in U$, $\gamma$
	has $H^{2,2}$-regularity on an open subset of $\gamma^{-1}(U)$ containing the point $\bar{s}$ and there it satisfies \eqref{eqn:ELboundarydelta} a.e..
	Now set $A = \{s \in [a,b]: \dot{\gamma}(s) \ne 0\}$.
	Since we are assuming that $p \ne q$, the set $A$ is not empty.
	Let $s_0 \in A$ be such that $\gamma_\d(s_0) \in \Omega$.
	Then there exists a neighbourhood $I$ of $s_0$ and a small number $d > 0$ such that
	$| \Phi(\gamma_{\d_n}(s)) | > d $ for every $s \in I$ and for $n$ sufficiently large.
	As a consequence, $(\lambda_{\d_n})_n$ uniformly converges to $0$ in $I$ and $\lambda = 0$ a.e. in $I$.
	Now set $B = \{ s \in A: \gamma(s) \in \p\Omega\}$.
	Since $\gamma$ is a $C^1$ function and $\Phi(\gamma) = 0$ in $B$, then (cf. \cite[Lemma 7.7]{Gilbarg1983})
	\begin{equation}
	\sprod{\nabla\Phi(\gamma(s))}{\dot{\gamma}(s)} = 0, \quad \forall s\in B.
	\end{equation}
	Contracting both therms of \eqref{eqn:ELboundarydelta} with $\dot{\gamma}$ leads to 
	\begin{equation}
	\left( d_qG(\gamma, \dot{\gamma}) - \frac{d}{ds}d_vG(\gamma, \dot{\gamma}) \right)[\dot{\gamma}] = 
	\lambda \sprod{\nabla\Phi_\d(\gamma(s))}{\dot{\gamma}(s)} = 0.
	\end{equation}
	As a consequence, 
	\begin{equation}
	E(\gamma, \dot{\gamma}) = d_vG(\gamma,\dot{\gamma}) - G(\gamma,\dot{\gamma})
	\end{equation}
	is constant on every connected component of $A$.
	By Euler's theorem, $E(\gamma, \dot{\gamma})  = G(\gamma,\dot{\gamma})$ and, being $ s \in [a,b] \to G(\gamma(s),\dot{\gamma}(s))$ a continuous function, we conclude that $G(\gamma,\dot{\gamma})$ is a non zero constant on the whole $[a,b]$.
	Consequently, $A = [a,b]$ and \eqref{eqn:ELboundarydelta} holds on the whole $[a,b]$. 
	
	Let us prove statement (4).
	Recalling \eqref{eqn:chiDeriv}, by Lemma \ref{lem:lambdaLinf}
	there exist a constant $k > 0$ such that
	\begin{multline}
	\sup_{s \in [a,b]}
	\left|\chi_{\d_n}(\Phi(\gamma_{\d_n}(s)))\right|
	= 
	\sup_{s \in [a,b]}
	\left|
	\chi'_{\d_n}(\Phi(\gamma_{\d_n}(s)))
	\frac{\Phi(\gamma_{\d_n}(s))
		\left(\d_n - \Phi(\gamma_{\d_n}(s))	\right)
	}{2\d_n}
	\right| \\
	\le
	k \sup_{t \in (0,\d_n)}
	\left|
	\frac{t (\d_n - t)}{2\d_n}
	\right|
	= \frac{k}{8} \d_n \to 0.
	\end{multline}
	Consequently, 
	$$
	\lim_{n \to \infty} \int_{a}^{b} \chi_{\d_n}(\Phi(\gamma_{\d_n}(s))) \ ds = 0,
	$$
	and since $\mathcal{C}([a,b],p,q,\oOmega) \subset \mathcal{C}([a,b],p,q,\Omega_\d)$ for any $\d$, then 
	$$
	\J^{a,b}(\gamma) = 
	\lim_{n \to \infty} 
	\J_{\d_n}(\gamma_{\d_n})
	\le 
	\lim_{n \to \infty}  
	\J_{\d_n}(y)
	=
	\J^{a,b}(y),
	\qquad \text{for all } y \in \mathcal{C}([a,b],p,q,\oOmega).
	$$
\end{proof}


\begin{proof}[Proof of Proposition \ref{prop:regularity}]
	If $x$ is a constant curve, i.e. $x(s) = q \in \p\Omega$ for any $s \in [0,1]$, then it is obviously in $H^{2,\infty}$ and \eqref{eqn:ELboundary} is trivially satisfied setting $\lambda = 0$.
	Let us assume that $x$ is a non-constant curve.
	We need only to prove the regularity of $x$ when it touches the boundary $\p\Omega$. In fact,
	$x$ is a free geodesic when it lays on $\Omega$, 
	so it is $C^2$.
	Since the regularity is a local property, we can restrict our analysis on a single chart $(U,\varphi)$ in a neighbourhood of a point $x(\bar{t}) \in \p\Omega$.
	Let $(a,b)$ be a neighbourhood of $\bar{t}$ such that $x([a,b]) \subset U$ and $x(a) \ne x(b)$. If $\bar{t} = 0$, then set $a = 0$ and, similarly, if $\bar{t} =1$, then set $b = 1$.
	Notice that, for our purpose, we can choose $a$ and $b$ as close as we desire.
	
	For any $\delta \in (0,\d_0)$, consider the functional $\J_{\d}$ defined on $\mathcal{C}([a,b],x(a),x(b),\Omega_\d)$.
	By Lemma \ref{lem:lim-delta}, there exists a curve $\gamma \in H^{2,2}$ that is a minimum of $\J^{a,b}$ on $\mathcal{C}([a,b],x(a),x(b),\overline{\Omega})$.
	As a first step, we prove that $x$ has $H^{2,2}$ regularity by showing that $x = \gamma$.
	
	Let us show that if $a$ and $b$ are sufficiently close, then $\gamma([a,b]) \subset U$.
	Let $y$ be a curve $\mathcal{C}([a,b],x(a),x(b),\overline{\Omega})$
	such that $y([a,b]) \not\subset U$ and let $\bar{s}$ be the first instant at which $y(s) \not\in U$.
	Using the Cauchy-Schwarz inequality and \eqref{eqn:boundG} we obtain that
	\begin{equation}
	\dist (x(a),\p U) \le 
	\int_{a}^{\bar{s}} \norm{\dot{y}(s)} ds
	\le 
	\left(
	\ell(b-a)
	\J(y)
	\right)^{\frac{1}{2}},
	\end{equation}
	so
	\begin{equation}
	\label{eqn:regProof1}
	\J(y) \ge  \frac{\dist^2 (x(a),\p U)}{\ell(b-a)}.
	\end{equation}
	Now let $\tilde x$ be 
	the reparametrization of $x$ 
	such that $\tilde{x}([a,b]) = x([a,b])$ and $G(\tilde x(s),\dot{\tilde{x}}) = c_x \ne 0$.
	Then 
	\begin{equation}
	\label{eqn:regProof2}
	\J^{a,b}(\gamma) = 
	\int_{a}^{b}G(\gamma,\dot{\gamma})\ ds
	\le 
	\int_{a}^{b}G(\tilde{x},\dot{\tilde{x}})\ ds 
	= (b - a) c_x.
	\end{equation}
	As a consequence, choosing $a$ and $b$ such that
	\begin{equation}
	b - a <
	\frac{\dist (x(a),\p U)}{ \sqrt{\ell c_x}} \ 
	,
	\end{equation}
	from \eqref{eqn:regProof1} and \eqref{eqn:regProof2} we obtain that
	\begin{equation}
	\J^{a,b}(\gamma)  <\frac{\dist^2 (x(a),\p U)}{ \ell (b-a)},
	\end{equation}
	so it lays on $U$.
	
	Now choose the map $\varphi$ such that 
	$$
	d\varphi\left(\frac{\nabla\Phi(q)}{\norm{\nabla\Phi(q)}}\right) \in \R^N
	$$
	is constant on the chart.
	Then $\xi = \gamma - x$ is an admissible variation of $x$ in $\mathcal{C}([a,b],x(a),x(b),\overline{\Omega})$,
	since 
	$$
	\sprod{\xi(s)}{\nabla\Phi(x(s))} \le 0, \qquad 
	\text{if } x(s) \in \p\Omega.
	$$
	Now define $f:[0,1] \to \R$ by
	\begin{equation}
	\label{eqn:regProof3-fDef}
	f(t) = \J^{a,b}(x + t\xi).
	\end{equation}
	Since $\gamma$ is a minimum for $\J^{a,b}$, we have that
	$$
	f(1) - f(0)
	=\J^{a,b}(\gamma) -  \J^{a,b}(x) \le 0.
	$$
	Setting $\xi(s) = 0$ for any $s \in [0,1] \setminus [a,b]$, we have that $\xi \in \V^-(x,\Mf)$.
	Since $x$ is a $\V^-$-critical curve for $\J$ on $\Mf$, we obtain 
	\begin{equation}
	\label{eqn:regProof4}
	f'(0) 
	= d\J^{a,b}(x)[\xi] 
	= d\J(x)[\xi] \ge 0.
	\end{equation}
	Looking for a contradiction, we set $\gamma \ne x$ and show that if $a$ and $b$ are sufficiently close then
	\begin{equation}
	\label{eqn:qwer}
	\int_{0}^{1}\big( f'(t) - f'(0)\big) dt > 0.
	\end{equation}
	As a consequence, 	\begin{equation}
	0  \ge f(1) - f(0) 
	= f'(0) + \int_0^1 \big( f'(t) - f'(0)\big) dt 
	\ge  \int_0^1 \big( f'(t) - f'(0)\big) dt > 0,
	\end{equation}
	which is an absurd.
	By definition of $f$ we have 
	\begin{multline}
	f'(t) - f'(0) = 
	\frac{1}{2} \int_{a}^{b}
	\bigg[
	\left(
	d_qG(x + t \xi,\dot{x} + t \dot\xi)[\xi] 
	- d_qG(x ,\dot{x} )
	\right)[\xi] \\
	+ \left(
	d_vG(x + t \xi,\dot{x} + t \dot\xi) - 
	d_vG(x ,\dot{x} )
	\right)
	[\dot{\xi}]
	\bigg] 
	ds.
	\end{multline}
	Using the mean value theorem and the bounds in \eqref{eqn:boundDeriv2} and \eqref{eqn:defAlpha} we obtain
	\begin{multline}
	\label{eqn:asdf}
	f'(t) - f'(0) = 
	\frac{t}{2}
	\int_{a}^{b}
	ds
	\int_0^1
	\bigg(
	d_{vv}G(x + \sigma t \xi,\dot{x} + \sigma t \dot\xi)[ \dot\xi][\dot\xi]\\
	+ 2 d_{qv}G(x + \sigma t \xi,\dot{x} + \sigma t \dot\xi)[ \xi][\dot\xi] 
	+ 	d_{qq}G(x + \sigma t \xi,\dot{x} + \sigma t \dot\xi)[ \xi][\xi] 
	\bigg) 
	d\sigma \\
	\ge
	\frac{t}{2}
	\int_{a}^{b}
	ds
	\int_0^1
	\bigg(
	\alpha \norm{\dot\xi}^2
	- 2 \ell (1 + \norm{\dot{x} + \sigma t \dot\xi})\norm{\xi}\norm{\dot\xi} \\
	- \ell (1 + \norm{\dot{x} + \sigma t \dot\xi}^2)\norm{\xi}^2
	\bigg) 
	d\sigma.
	\end{multline}	
	Let us show that there exists a constant $c_1 > 0$, which depends only on $x$, such that
	\begin{equation}
	\int_{a}^{b} 
	\norm{\dot{x} + \sigma t \dot\xi}^2
	ds
	\le c_1.
	\end{equation}
	Indeed, by the inequalities in \eqref{eqn:boundG} and since $\gamma$ is a minimum for $\J^{a,b}$, we have the following chain of inequalities
	\begin{multline}
	\int_{a}^{b} 
	\norm{\dot{x} + \sigma t \dot\xi}^2
	ds
	\le
	2 \int_{a}^{b}
	\left(
	\norm{\dot{x}}^2 + \norm{\sigma t \dot\xi}^2
	\right)	ds
	\le
	2 \int_{a}^{b}
	\left(
	\norm{\dot{x}}^2 + \norm{\dot\xi}^2
	\right)	ds \\
	\le
	2 \int_{a}^{b}
	\left(
	\norm{\dot{x}}^2 + 2 (\norm{\gamma}^2 + \norm{x}^2)
	\right) ds 
	\le
	6 \int_{a}^{b} \norm{\dot{x}}^2 ds
	+
	4 \ell \int_{a}^{b} G(\gamma,\dot{\gamma}) ds
	\\
	\le
	6 \int_{a}^{b} \norm{\dot{x}}^2 ds
	+
	4 \ell \int_{a}^{b} G(x,\dot{x}) ds
	\le
	6 \int_{a}^{b} \norm{\dot{x}}^2 ds
	+
	4 \ell^2 \int_{a}^{b} \norm{\dot{x}}^2 ds
	\\
	\le
	(6 + 4 \ell^2)\int_{0}^{1} \norm{\dot{x}}^2 ds = c_1.
	\end{multline} 
	As a consequence, there exists a strictly positive constant $c_2$ such that
	\begin{multline}
	I_1 = 
	\ell
	\int_{a}^{b} \left(
	\int_{0}^{1}
	( 1+ \norm{\dot{x} + \sigma t \dot\xi})\norm{\xi}\norm{\dot\xi} d \sigma
	\right) ds \\
	=
	\ell
	\int_{a}^{b}
	\norm{\xi}\norm{\dot\xi} ds
	+
	\ell
	\int_{0}^{1}
	\left(
	\int_{a}^{b}
	(\norm{\dot{x} + \sigma t \dot\xi})
	\norm{\xi}\norm{\dot\xi}
	ds
	\right) d\sigma 
	\\
	\le
	\ell
	\norm{\xi}_{L^\infty} \norm{\dot\xi}_{L^2}
	+
	\ell
	\norm{\xi}_{L^\infty} \norm{\dot\xi}_{L^2}
	\int_{0}^{1}
	\left(\int_{a}^{b} \norm{\dot{x} + \sigma t \dot\xi}^2 ds \right)^{1/2}
	d\sigma
	\\
	\le 
	c_2 \norm{\xi}_{L^\infty} \norm{\dot\xi}_{L^2},
	\end{multline}	
	where we applied the Tonelli's theorem and the H\"{o}lder inequality. 
	Similarly, there exists a constant $c_3 > 0$ such that
	\begin{multline}
	\label{eqn:I1bound}
	I_2 =
	\ell
	\int_{a}^{b} \left(
	\int_{0}^{1}
	(1 + \norm{\dot{x} + \sigma t \dot\xi}^2)\norm{\xi}^2 d\sigma
	\right) ds \\
	\le
	\ell
	\norm{\xi}^2_{L^\infty}
	\int_{0}^{1}
	\left(
	\int_{a}^{b} (1 + \norm{\dot{x} + \sigma t \dot\xi}^2)
	ds
	\right) d\sigma
	\le 
	c_3
	\norm{\xi}^2_{L^\infty}.
	\end{multline}
	Then, by \eqref{eqn:asdf} we obtain 
	\begin{equation}
	\label{eqn:asdf2}
	f'(t) - f'(0)
	\ge
	\frac{t}{2}
	\left(
	\alpha\norm{\dot\xi}^2_{L^2}
	- c_2 \norm{\xi}_{L^\infty} \norm{\dot\xi}_{L^2}
	- c_3 \norm{\xi}^2_{L^\infty}
	\right).
	\end{equation}
	Since $\xi(a) = 0$, we have
	\begin{equation}
	\norm{\xi(s)} = 
	\left\lVert
	\xi(a) + \int_{a}^{s} \dot\xi(\sigma)d\sigma 
	\right\rVert
	\le
	\int_{a}^{s} \norm{\dot\xi(\sigma)} d\sigma \le 
	\sqrt{s - a}\ \norm{\dot\xi}_{L^2},
	\end{equation}
	therefore
	\begin{equation}
	\label{eqn:xiInfledotxi2}
	\norm{\xi}_{L^\infty} \le \sqrt{b - a}\ \norm{\dot\xi}_{L^2}.
	\end{equation}
	By \eqref{eqn:asdf2} we obtain
	\begin{equation}
	f'(t) - f'(0) \ge
	\frac{t}{2} \norm{\dot\xi}^2_{L^2}
	\left(\alpha -c_2\sqrt{b - a} - c_3(b- a)\right),
	\end{equation}
	and, if $b-a$ is sufficiently small, then there exists a constant $c_4 > 0$ such that
	$$
	f'(t) - f'(0) \ge c_4 t. 
	$$
	As a consequence, \eqref{eqn:qwer} holds so $x$ and $\gamma$ must coincide.
	
	Since $x = \gamma$ in every chart, by Lemma \ref{lem:lim-delta} we obtain that
	$\dot{x}(s) \ne 0$ for all $x \in [0,1]$.
	Moreover, \eqref{eqn:ELboundary} holds a.e. for a function $\lambda \in L^2([0,1],\R)$ such that $\lambda(s) = 0$ for all $s \notin C_x$ and $\lambda \le 0$ a.e..
	Set $\rho(s) = \Phi(x(s))$.
	Since $\rho(s) = 0$ on $C_x$ and $\dot\rho$ is absolutely continuous, by \cite[Lemma 7.7]{Gilbarg1983} we have
	$
	\ddot\rho(s) = 0 \text{ a.e. on $C_x$}.
	$
	Using \eqref{eqn:ELboundary} in local coordinates, we get that $x$ satisfies the equations
	\begin{equation}
	\label{eqn:eqGeoBound}
	\ddot x^i + \Gamma_{jk}^i(x,\dot{x})
	\dot{x}^j \dot{x}^k =
	-\lambda \p_{q^k}\Phi({x})
	g^{ki}({x},\dot{x}),
	\end{equation}
	so we obtain
	\begin{equation}
	0 = 
	\ddot{\rho}(s) = 
	\p^2_{q^i q^j}\Phi(x)\dot{x}^i\dot{x}^j -
	\p_{q^i} \Phi(x)\Gamma_{jk}^i(x,\dot{x})
	\dot{x}^j \dot{x}^k
	- 
	\lambda \p_{q^i}\Phi(x)\p_{q^k}\Phi(x)g^{ik}(x,\dot{x}).
	\end{equation}
	Using \eqref{eqn:FinHesLocCoo}, we obtain \eqref{eqn:lambdaCx}, and since $\dot{x}$ is a continuous function, also $\lambda$ is a continuous function in $[0,1]$. Then $\lambda \in L^\infty([0,1],\R)$ and using  \eqref{eqn:eqGeoBound} we obtain that $\ddot{x} \in L^\infty([0,1],\R^N)$, so $x \in H^{2,\infty}([0,1],\oOmega)$.
\end{proof}

\begin{remark}
	By a similar argument used in the proof of Proposition \ref{prop:regularity}, one can prove that if $q_1,q_2 \in \oOmega$ are sufficiently close, then there exists a unique $\V^-$-critical curve for $\J$ on $\Mf$, i.e. a unique geodesic in a manifold with boundary, that connects $q_1$ and $q_2$.
	This has been done for the Riemannian case in \cite{Scolozzi1986}.
\end{remark}

Thanks to the $H^{2,\infty}$-regularity of the $\V^-$-critical curves for $\J$ on $\Mf$ given by Proposition \ref{prop:regularity},  integration by parts of \eqref{eqn:Jdiff} becomes available.
This allows to prove that every non-constant $\V^-$-critical curve for $\J$ on $\Mf$ either it is an orthogonal Finsler geodesic chord or it contains an orthogonal-tangent Finsler geodesic chord.
%

\begin{lemma}
	\label{lem:orthogonality}
	If $x$ is a non-constant $\V^-$-critical curve of $\J$ in $\Mf$, then $\dot{x}(0)$ and $\dot{x}(1)$ are orthogonal to $\p\Omega$.
\end{lemma}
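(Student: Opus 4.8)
The plan is to test the $\V^-$-criticality of $x$ against a family of admissible variations which are \emph{exactly tangent} to $\p\Omega$ along the contact set, so that the interior (Lagrange-multiplier) term in $d\J(x)[\xi]$ drops out and only a boundary term at one endpoint survives.

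First I would invoke Proposition~\ref{prop:regularity}: since $x$ is non-constant, $x\in H^{2,\infty}$, $\dot x(s)\neq 0$ for every $s\in[0,1]$ (so, $G$ being $C^3$ off the zero section, $s\mapsto d_vG(x(s),\dot x(s))$ is well defined and absolutely continuous), and there is $\lambda\in L^\infty([0,1],\R)$ with $\lambda\le 0$ a.e., $\lambda=0$ on $[0,1]\setminus C_x$, such that $d_qG(x,\dot x)-\frac{d}{ds}d_vG(x,\dot x)=\lambda\,\nabla\Phi(x)$ a.e. Recall also that $d_vG=d_vF^2$, so the orthogonality we want is precisely $d_vG(x(0),\dot x(0))[w]=0$ for all $w\in T_{x(0)}\p\Omega$, and similarly at $s=1$.

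Next, fix $w\in T_{x(0)}\p\Omega$ (the endpoint $s=1$ is handled by the symmetric construction). Choose $\eta\in C^\infty(\R,[0,1])$ with $\eta\equiv 1$ near $0$ and $\operatorname{supp}\eta\subset(-\d_0,\d_0)$, and an $H^{1,2}$ vector field $W$ along $x$ with $W(0)=w$ and $W(1)=0$. Set
\[
\xi(s)=W(s)-\eta\big(\Phi(x(s))\big)\,\frac{\sprod{\nabla\Phi(x(s))}{W(s)}}{\norm{\nabla\Phi(x(s))}^2}\,\nabla\Phi(x(s)).
\]
On the support of $\eta\circ\Phi\circ x$ one has $\Phi(x(s))\in(-\d_0,\d_0)$, hence $\nabla\Phi(x(s))\neq 0$ by \eqref{eqn:delta0Def}, so $\xi$ is a well-defined $H^{1,2}$ vector field along $x$ with $\xi(s)\in T_{x(s)}\M$. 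Since $\Phi(x(0))=0$, $\eta(0)=1$ and $\sprod{\nabla\Phi(x(0))}{w}=0$, we get $\xi(0)=w$, while $\xi(1)=W(1)=0$. Crucially, at every $s$ with $x(s)\in\p\Omega$ we have $\Phi(x(s))=0$, hence $\eta(\Phi(x(s)))=1$ and $\sprod{\nabla\Phi(x(s))}{\xi(s)}=0$; in particular both $\xi$ and $-\xi$ belong to $\V^-(x,\Mf)$.

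Then I would integrate \eqref{eqn:Jdiff} by parts (legitimate by the $H^{2,\infty}$-regularity above) and use \eqref{eqn:ELboundary} together with $\xi(1)=0$, $\xi(0)=w$, obtaining
\[
d\J(x)[\xi]=-\tfrac12\,d_vG(x(0),\dot x(0))[w]+\tfrac12\int_0^1\lambda\,\sprod{\nabla\Phi(x)}{\xi}\,ds .
\]
The integral vanishes, since $\lambda=0$ on $[0,1]\setminus C_x$ whereas $\sprod{\nabla\Phi(x)}{\xi}=0$ on $C_x$. As $x$ is $\V^-$-critical and $\pm\xi\in\V^-(x,\Mf)$, we have $d\J(x)[\xi]\ge0$ and $d\J(x)[-\xi]=-d\J(x)[\xi]\ge0$, hence $d\J(x)[\xi]=0$ and therefore $d_vG(x(0),\dot x(0))[w]=0$. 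Since $w\in T_{x(0)}\p\Omega$ was arbitrary this is \eqref{eqn:conBounCond} at $t=0$, and the analogous construction with $\xi(0)=0$, $\xi(1)=w'\in T_{x(1)}\p\Omega$ yields it at $t=1$. The only genuinely delicate point is the construction of $\xi$: it must be admissible, $H^{1,2}$, have prescribed endpoint data, and be exactly tangent to $\p\Omega$ along $C_x$ (so that $-\xi$ is admissible too and the bulk term disappears); the projection-along-$\nabla\Phi$ formula, cut off by $\eta\circ\Phi$ to stay in the region $\Phi^{-1}((-\d_0,\d_0))$ where $\nabla\Phi\neq 0$, achieves all of this.
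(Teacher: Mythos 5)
Your proof is correct and follows essentially the same route as the paper: both test $\V^-$-criticality against variations $\xi$ with $\sprod{\nabla\Phi(x)}{\xi}=0$ on the contact set (so that $\pm\xi$ are admissible and the multiplier term vanishes), integrate by parts using the $H^{2,\infty}$-regularity and \eqref{eqn:ELboundary}, and read off the boundary term. The only difference is that you construct the admissible tangential variation explicitly via the cut-off projection along $\nabla\Phi$, a detail the paper leaves implicit.
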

\begin{proof}
	Set $C_x = \{s \in [0,1]: x(s) \in \p\Omega \}$ and $I_x = [0,1]\setminus C_x$.
	Let $\xi \in T_x\M$ be such that 
	$$
	\sprod{\xi(s)}{\nabla\Phi(x(s))} = 0  \qquad \text{on $C_x$}.
	$$
	Then, both $\xi$ and $-\xi$ are admissible infinitesimal variations of $x$ in $\Mf$ and
	$
	d\J(x)[\xi] = 0.
	$
	As a consequence, partial integration and \eqref{eqn:ELboundary} lead to 
	\begin{multline}
	d\J(x)[\xi] = 
	\frac{1}{2}\bigg(
	d_vG(x(1),\dot{x}(1))[\xi(1)] - 
	d_vG(x(0),\dot{x}(0))[\xi(0)] 
	\bigg)\\
	+ \frac{1}{2}
	\int_{0}^1 \left(
	d_q G(x,\dot{x}) -\frac{d}{ds} d_vG(x,\dot{x})
	\right) 
	[\xi]
	ds
	\\
	=
	\frac{1}{2}\bigg(
	d_vG(x(1),\dot{x}(1))[\xi(1)] - 
	d_vG(x(0),\dot{x}(0))[\xi(0)] 
	\bigg)
	= 0.
	\end{multline}
	Since both $\xi(0)$ and $\xi(1)$ are arbitrary tangent vectors to $\p\Omega$, we infer that, with respect to the Finsler structure, both $\dot{x}(0)$ and $\dot{x}(1)$ are orthogonal to $T_{x(0)}\p\Omega$ and $T_{x(1)}\p\Omega$ respectively. 
\end{proof}

\begin{proposition}
	\label{prop:V-crit-OFG}
	Let $x$ be a non-constant $\V^-$-critical curve for $\J$ on $\Mf$.
	Then either
	\begin{itemize}
		\item there exists $\bar{s} \in ]0,1[$ such that $x|_{[0,\bar{s}]}$ is an orthogonal-tangent Finsler geodesic chord,
	\end{itemize}
	or
	\begin{itemize}
		\item $x$ is an orthogonal Finsler geodesic chord.
	\end{itemize}
\end{proposition}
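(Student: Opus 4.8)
I would combine the regularity of Proposition~\ref{prop:regularity} with the endpoint orthogonality of Lemma~\ref{lem:orthogonality}, and then analyse the first instant at which $x$ returns to $\p\Omega$. Recall from Proposition~\ref{prop:regularity} that $x\in H^{2,\infty}$, that $\dot x(s)\neq 0$ for every $s\in[0,1]$, and that there is $\lambda\in L^\infty$ with $\lambda=0$ outside $C_x=\{s:x(s)\in\p\Omega\}$ solving \eqref{eqn:ELboundary}; and from Lemma~\ref{lem:orthogonality} that $\dot x(0)$ and $\dot x(1)$ are Finsler-orthogonal to $\p\Omega$. The first step is to observe that $x$ cannot slide along $\p\Omega$ at $s=0$: if $\dot x(0)\in T_{x(0)}\p\Omega$, then choosing $\xi=\dot x(0)$ in the orthogonality relation $g_{\dot x(0)}(\dot x(0),\xi)=0$ would give $G(x(0),\dot x(0))=g_{\dot x(0)}(\dot x(0),\dot x(0))=0$, contradicting $\dot x(0)\neq 0$ and the positive-definiteness of $g_v$. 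Hence $\langle\nabla\Phi(x(0)),\dot x(0)\rangle\neq 0$, and since $\Phi(x(s))\le 0=\Phi(x(0))$ on $[0,1]$ we must have $\langle\nabla\Phi(x(0)),\dot x(0)\rangle<0$, so $x(s)\in\Omega$ for all small $s>0$. Therefore $\bar s:=\min\{s\in{}]0,1]:x(s)\in\p\Omega\}$ is well defined, $\bar s>0$, and $x(s)\in\Omega$ for every $s\in{}]0,\bar s[$.

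Next I would show that $x|_{[0,\bar s]}$ is a genuine Finsler geodesic. Since $C_x\cap[0,\bar s]\subset\{0,\bar s\}$ has measure zero, $\lambda=0$ almost everywhere on $[0,\bar s]$, so \eqref{eqn:ELboundary} — written in local coordinates as \eqref{eqn:eqGeoBound} with $\lambda=0$ — reduces a.e. on $[0,\bar s]$ to the geodesic equation \eqref{eqn:geodesicEquationsChern}. As $x\in H^{2,\infty}\subset C^1$, the velocity $\dot x$ is continuous and nowhere vanishing, hence $\Gamma^i_{jk}(x,\dot x)\dot x^j\dot x^k$ is continuous on $[0,\bar s]$; thus $\ddot x$ coincides a.e. with a continuous function, $x|_{[0,\bar s]}\in C^2$, and it solves \eqref{eqn:geodesicEquationsChern} on all of $[0,\bar s]$. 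Consequently $x|_{[0,\bar s]}$ is a geodesic with $x(0),x(\bar s)\in\p\Omega$ and $x({}]0,\bar s[)\subset\Omega$, i.e.\ a Finsler geodesic chord; the same argument with $\bar s$ replaced by $1$ applies whenever $C_x\cap{}]0,1[=\emptyset$.

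It remains to split into the two cases. If $\bar s=1$ — equivalently $x$ never meets $\p\Omega$ on $]0,1[$ — then $x$ itself is a Finsler geodesic chord, and by Lemma~\ref{lem:orthogonality} both $\dot x(0)$ and $\dot x(1)$ are orthogonal to $\p\Omega$, so $x$ is an \OFGC. If instead $\bar s\in{}]0,1[$, set $\rho(s)=\Phi(x(s))\in C^1([0,1])$; since $\rho\le 0$ on $[0,1]$ and $\rho(\bar s)=0$, the point $\bar s$ is an interior maximum of $\rho$, whence $\dot\rho(\bar s)=\langle\nabla\Phi(x(\bar s)),\dot x(\bar s)\rangle=0$, i.e.\ $\dot x(\bar s)\in T_{x(\bar s)}\p\Omega$. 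Combined with the fact that $\dot x(0)$ is orthogonal to $\p\Omega$ and with the geodesic chord property established above, this says precisely that $x|_{[0,\bar s]}$ is an \OTFGC, which proves the proposition.

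The only genuinely delicate point is the first one, namely ruling out that $x$ starts by running along $\p\Omega$, which is exactly where the non-degeneracy of the fundamental tensor $g_v$ enters; once that is settled, $\bar s$ is a bona fide first-return time in $]0,1]$ and the rest is a straightforward bootstrap of the $H^{2,\infty}$-regularity of Proposition~\ref{prop:regularity} together with the first-order condition at an interior maximum of $\Phi\circ x$.
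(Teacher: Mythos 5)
Your proposal is correct and follows essentially the same route as the paper's proof: use Lemma~\ref{lem:orthogonality} to see that $\dot x(0)$ points strictly into $\Omega$ (so the first return time $\bar s$ to $\p\Omega$, if any, is positive), invoke the $C^1$-regularity of Proposition~\ref{prop:regularity} to conclude $\dot x(\bar s)$ is tangent at an interior maximum of $\Phi\circ x$, and split into the cases $C_x\neq\emptyset$ (OTFGC) and $C_x=\emptyset$ (OFGC). You merely spell out two steps the paper leaves implicit — that tangency of $\dot x(0)$ would force $G(x(0),\dot x(0))=0$ via Euler's theorem, and that $\lambda=0$ a.e.\ on $[0,\bar s]$ makes $x|_{[0,\bar s]}$ a genuine geodesic — both of which are correct.
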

\begin{proof}
	Set $C_x = \{ s \in ]0,1[: x(s) \in \p\Omega \}$.
	Since $x$ is a non-constant $\V^-$-critical curve for $\J$ on $\Mf$, $\dot{x} \ne 0$ everywhere.
	By Lemma \ref{lem:orthogonality}, $\dot{x}(0)$ points inside $\Omega$.
	As a consequence, if $C_x \neq \emptyset$, then $\bar s = \min C_x > 0$.
	By Proposition \ref{prop:regularity}, $x$ is of class $C^1$ and $\dot{x}(\bar{s})$ must be tangent to $\p\Omega$.
	Then $x|_{[0,\bar{s}]}$ is an orthogonal-tangent Finsler geodesic chord.
	Otherwise, if $C_x = \emptyset$, Lemma \ref{lem:orthogonality} implies that $x$ is an orthogonal Finsler geodesic chord.
\end{proof}

\begin{remark}
	Let $\Omega$ be a strictly convex domain, i.e.
	every Finsler geodesic that is tangent to $\p\Omega$ 
	locally remains outside $\Omega$.
	Then every no-constant $\V^-$-critical curve for $\J$ on $\Mf$ is an \OFGC.
	Indeed, if $\Omega$ is a strictly convex domain, 
	then the Finsler Hessian $H_\Phi(q,v)[v,v] > 0$ for every $q \in \p\Omega$ and $v \in T_q\p\Omega \setminus 0$.
	As a consequence, from \eqref{eqn:lambdaCx} we infer that if $x$ is a non-constant $\V^-$-critical curve for $\J$ on $\Mf$, $\lambda$ should be non-negative whenever $x$ touches the boundary $\p\Omega$. 
	Since $\lambda \le 0$ for every $s \in [0,1]$, then $\lambda = 0$ and $x$ is a free geodesics.
	Therefore, $x(]0,1[) \in \Omega$ and, by Lemma \ref{lem:orthogonality}, $x$ is an \OFGC.
\end{remark}

\section{$\V^-$-Palais-Smale Sequences}

In this section we prove the so-called Palais-Smale condition for the functional $\J$ in $\Mf$.
Due to the presence of the boundary $\p\Omega$, $\Mf$ is not a smooth manifold and the definition of Palais-Smale sequence has to be modified accordingly.

\begin{definition}
	A sequence $(x_n)_n \subset \Mf$ is a $\V^-$-\textit{Palais-Smale sequence} for $\J$ at level $c \in \R$ if
	$
	\lim_{n \to \infty} \J(x_n) = c,
	$
	and for all $V_n \in \V^-(x_n)$ such that $\norm{V_n}_* = 1$
	(where the norm $\norm{\cdot}_*$ has been defined in \eqref{eqn:defNorm*}) 
	the following holds:
	\begin{equation}
	\label{eqn:V-PSdef}
	d\J(x_n)[V_n] \ge - \epsilon_n,
	\end{equation}
	where $\epsilon_n\to 0^+$.
\end{definition}

When the action of a general Tonelli-Lagrangian function $L$ of class $C^2$ is considered, the Palais-Smale condition can be proved exploiting the fiberwise convexity of $L$ by the mean value theorem on $d_vL$ (cf. 
\cite{Asselle2016,Corona2020}).
Since the energy function $G$ is not regular on the zero section, we cannot apply the mean value theorem on $d_vG$.
In \cite{CaponioMasiello2011} a proof of the Palais-Smale condition for the Finsler energy functional is given using a localization argument introduced in \cite{Abbondandolo2007}, which allows to work in an open subset of $\R^n$.
Here we use the same procedure of \cite{CaponioMasiello2011} to deal with the lack of regularity of $G$ on the zero section, but we work in local charts.

\begin{proposition}
	\label{prop:PS}
	Let $(x_n)_n\subset \Mf$ be a $\V^-$-Palais-Smale sequence for $\J$ at level $c \in \R$.
	Then $x_n$ is strongly $H^1$-convergent.
\end{proposition}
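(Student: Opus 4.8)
The plan is to establish the Palais--Smale condition by combining a uniform bound on the $H^{1,2}$-norm of the sequence (coming from the level-boundedness and the coercivity estimate \eqref{eqn:boundG}) with a local-chart argument in the spirit of \cite{CaponioMasiello2011} and \cite{Abbondandolo2007} that upgrades weak convergence to strong convergence, handling the lack of regularity of $G$ on the zero section. The constraint $x_n \in \Mf$, i.e.\ $x_n([0,1])\subset\oOmega$ with endpoints on $\p\Omega$, and the $\V^-$-formulation of the critical condition \eqref{eqn:V-PSdef} must be incorporated at each step: admissible variations are only those pointing inward when $x_n$ touches $\p\Omega$, so one cannot simply test $d\J(x_n)$ against $x_n-x$.

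First I would extract the uniform bound. Since $\J(x_n)\to c$, for large $n$ we have $\int_0^1 G(x_n,\dot x_n)\,ds \le 2c+1$, hence by \eqref{eqn:boundG} $\norm{\dot x_n}_{L^2}^2 \le \ell(2c+1)$; combined with $x_n([0,1])\subset\oOmega$ compact, Ascoli--Arzel\`a gives a subsequence with $x_n\to x$ uniformly and $\dot x_n\rightharpoonup \dot x$ weakly in $L^2$, with $x\in\Mf$ (the constraint set is closed under uniform limits). It remains to show $\dot x_n\to\dot x$ strongly in $L^2$. Next I would localize: cover $x([0,1])$ by finitely many charts $(U_\alpha,\varphi_\alpha)$ of $\M$; for $n$ large $x_n([0,1])$ stays in the union, and by a partition of $[0,1]$ into finitely many subintervals $[t_{j-1},t_j]$ one reduces to proving $\dot x_n\to\dot x$ in $L^2([t_{j-1},t_j])$ with $x_n([t_{j-1},t_j])\subset U_\alpha$ for a fixed chart, where we may work in coordinates in an open subset of $\R^N$.

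In the chart, the natural test variation is $V_n = \gamma(x_n - x)$ cut off by a scalar bump function $\gamma\in C_c^\infty((t_{j-1},t_j))$, but it must be made $\V^-$-admissible: when $x_n(s)\in\p\Omega$ the component of $V_n(s)$ along $\nabla\Phi(x_n(s))$ must be $\le 0$. Following the argument used in the proof of Proposition~\ref{prop:regularity} (choosing the chart so that $d\varphi(\nabla\Phi/\norm{\nabla\Phi})$ is constant), one projects out the bad normal component, incurring an error that vanishes because $x_n\to x$ uniformly and both curves lie in $\oOmega$; the corrected variation $\tilde V_n$ satisfies $\norm{\tilde V_n}_* $ bounded and $\tilde V_n\rightharpoonup 0$. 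Plugging into \eqref{eqn:V-PSdef} and using the same inequality with $-\tilde V_n$ up to the one-sided error, one gets $d\J(x_n)[\tilde V_n]\to 0$. Expanding $d\J(x_n)[\tilde V_n]$ via \eqref{eqn:Jdiff} and isolating the principal term
\[
\frac12\int_{t_{j-1}}^{t_j} \gamma\, d_vG(x_n,\dot x_n)[\dot x_n - \dot x]\,ds,
\]
the terms involving $d_qG$, the derivative $\dot\gamma$, and the error correction are controlled by the $L^2$-bound on $\dot x_n$, the growth estimates \eqref{eqn:boundDqG}, and uniform convergence, hence tend to $0$. Finally, comparing with $\frac12\int \gamma\, d_vG(x_n,\dot x)[\dot x_n-\dot x]\,ds\to 0$ (continuity of $d_vG$ in $q$ plus weak convergence, which requires care near $\{\dot x=0\}$ but is handled exactly as in \cite[Theorem~3.1]{CaponioMasiello2011}), one obtains
\[
\int_{t_{j-1}}^{t_j} \gamma\,\bigl(d_vG(x_n,\dot x_n)-d_vG(x_n,\dot x)\bigr)[\dot x_n-\dot x]\,ds \to 0,
\]
and by the uniform convexity \eqref{eqn:defAlpha} of $G$ in $v$ (away from zero; the zero set being treated separately via the localization trick) this forces $\dot x_n\to\dot x$ in $L^2([t_{j-1},t_j])$. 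Summing over the finitely many subintervals gives $x_n\to x$ strongly in $H^{1,2}$.

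\medskip
\textbf{The main obstacle} will be the interplay between the two non-smoothnesses: the boundary $\p\Omega$ (which forces the $\V^-$-correction of the test variation and one-sided inequalities) and the singularity of $G$ on the zero section (which blocks a naive mean-value estimate on $d_vG$ and must be bypassed by the localization scheme of \cite{Abbondandolo2007,CaponioMasiello2011}). Reconciling these — ensuring the boundary correction to the test field does not destroy the weak-convergence-to-zero property and that the convexity estimate can still be invoked on the set where $\dot x$ may vanish — is the delicate point; everything else is routine estimation using the bounds collected in Remark~\ref{rmk:aDef}.
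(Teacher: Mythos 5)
Your overall strategy --- uniform $H^1$ bound, Ascoli--Arzel\`a, testing the Palais--Smale inequality against a vector field joining $x_n$ to its weak limit $x$, then fiberwise convexity of $G$ together with the localization trick of \cite{Abbondandolo2007,CaponioMasiello2011} to get around the zero section --- is exactly the paper's. But there is a genuine flaw in the step where you claim $d\J(x_n)[\tilde V_n]\to 0$ by ``using the same inequality with $-\tilde V_n$''. If $\tilde V_n(s)$ points inward at a time $s\in(0,1)$ with $x_n(s)\in\p\Omega$, then $-\tilde V_n(s)$ points outward, so $-\tilde V_n\notin\V^-(x_n,\Mf)$ and \eqref{eqn:V-PSdef} cannot be applied to it; only the one-sided bound $\liminf_n d\J(x_n)[\tilde V_n]\ge 0$ is available. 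Moreover the sign of your variation is backwards: $x_n-x$ points \emph{out} of $\oOmega$ at boundary points of $x_n$ where $x$ is interior, and the one-sided inequality it would yield is the useless direction (a $\liminf\ge 0$ for a quantity that is nonnegative anyway by monotonicity of $d_vG$). The variation must point from $x_n$ toward $x$. With that choice the one-sided inequality is in fact all you need: it gives $\limsup_n\int d_vG(x_n,\dot x_n)[\dot x_n-\dot x]\,ds\le 0$, which combined with $\int d_vG(x_n,\dot x)[\dot x_n-\dot x]\,ds\to 0$ (weak convergence) forces the pointwise nonnegative convexity pairing $\bigl(d_vG(x_n,\dot x_n)-d_vG(x_n,\dot x)\bigr)[\dot x_n-\dot x]$ to tend to $0$ in $L^1$. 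So the two-sided claim is both incorrect and unnecessary.

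On the admissibility correction itself, the paper uses a cleaner device that you may want to adopt: it fixes an auxiliary Riemannian metric $\bar g$ on $\oOmega$ for which $\p\Omega$ is totally geodesic and sets $V_n(s)=(\overline{\exp}_{x_n(s)})^{-1}(x(s))$. This field is automatically in $\V^-(x_n,\Mf)$ with no correction: if $x_n(s),x(s)\in\p\Omega$ the $\bar g$-geodesic joining them stays in $\p\Omega$, so $V_n(s)$ is tangent to the boundary; if $x_n(s)\in\p\Omega$ and $x(s)\in\Omega$, then $V_n(s)$ points inward. Your flattening-chart idea can be made to work for the same reason (in such a chart the normal component of $x-x_n$ is $\le 0$ wherever $x_n$ touches $\p\Omega$, simply because $x$ stays in $\oOmega$), but then no projection and no error term are needed; as written, projecting only on the set $\{s:x_n(s)\in\p\Omega\}$ does not obviously produce an $H^{1,2}$ vector field, so that step would need to be reworked anyway.
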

\begin{proof}
	If $(x_n)_n$ is a $\V^-$-Palais-Smale sequence, then it admits a subsequence which is $H^1$-weakly convergent.
	In fact, by Remark \ref{rmk:aDef}, for $n$ sufficiently large we have
	$$
	\norm{\dot{x}_n}_2^2 = \int_{0}^1 \norm{\dot{x}_n}^2 ds \le \ell \int_{0}^{1}G(x_n,\dot{x}_n) ds \le 2\ell \J(x_n) \le 2\ell(c+1).
	$$
	Applying the Ascoli-Arzel\'a theorem, there exists a subsequence, which we will denote again by $(x_n)$, that converges uniformly to a curve $x \in \Mf$, so it is weakly $H^1$-convergent.
	Let us consider an auxiliary Riemannian metric $\bar{g}$ in $\overline{\Omega}$ for which the boundary $\p \Omega$ is totally geodesic,
	and let $\overline{\text{exp}}$ denote the relative exponential map.
	For all $n$ sufficiently large, we set
	\begin{equation}
	V_n(s) = (\overline{\text{exp}}_{x_n(s)})^{-1}(x(s)).
	\end{equation}
	Since $x_n$ converges to $x$ uniformly, the vector field $V_n$ is well defined and converges uniformly to $0$.
	Also, $\norm{V_n}_{*}$ is bounded, because $(x_n)_n$ is bounded in $H^1$.
	Moreover, $V_n \in \V^-(x_n)$ for all $n$ sufficiently large.
	In fact, if $x_n(s) \in \Omega$, then $V_n(s)$ can be any element of $T_{x_n(s)}\M$ and if $x_n(s) \in \p\Omega$, we have two cases.
	If $x(s) \in \p\Omega$, then $V_n(s) \in T_{x_n(s)}\p\Omega$, having used the fact that $\p\Omega$ is totally geodesic relatively to $\bar{g}$.
	If $x(s) \in \Omega$, then $V_n(s)$ points inside $\Omega$, since $\overline{\text{exp}}$ locally defines a chart in a neighbourhood of $x_n(s)$.
	As a consequence, being $(x_n)_n$ a $\V^-$-Palais-Smale sequence, we have that
	\begin{equation}
	\liminf_{n \to \infty} d\J(x_n)[V_n] 
	=\liminf_{n \to \infty} \frac{1}{2} \int_{0}^1
	\left( 
	d_qG(x_n,\dot{x}_n)[V_n] +
	d_vG(x_n,\dot{x}_n)[\dot{V}_n] \right)ds
	\ge 0.
	\end{equation}
	By \eqref{eqn:boundDqG} and since $V_n \to 0$ uniformly, 
	\begin{equation}
	\label{eqn:PSproof1}
	\lim_{n \to \infty}\int_{0}^1 d_qG(x_n,\dot{x}_n)[V_n] ds 
	\le \lim_{n \to \infty} \int_{0}^1 c_3 (1+ \norm{x_n}^2)\norm{V_n} ds = 0,
	\end{equation}
	and, as a consequence, 
	\begin{equation}
	\label{eqn:PSproof2}
	\liminf_{n \to \infty} \int_{0}^1
	d_vG(x_n,\dot{x}_n)[\dot{V}_n]ds
	\ge 0.
	\end{equation}
	Let $[a,b]\subset [0,1]$ be such that $x([a,b])$ is in a single chart $(U,\varphi)$, with $U \subset \R^N$ and $\varphi:U \to \M$.
	If $n$ is big enough, also $x_n([a,b]) \subset U$.
	Considering the restriction of $x$ and $x_n$ on $[a,b]$, we define $\xi(s) = \varphi^{-1}(x(s))$ and $\xi_n(s) = \varphi^{-1}(x_n(s))$.
	Using this local chart, 
	\begin{equation}
	\dot{V}_n = \dot{\xi} - \dot\xi_n + w_n,
	\end{equation}
	where $w_n^i$ is $L^2$ convergent to $0$.
	We define $\tilde{G}:U \times \R^N \to \R$ as
	\begin{equation}
	\tilde{G}(\zeta,\eta) = G(\varphi(\zeta),d\varphi(\zeta)[\eta]).
	\end{equation}
	From \eqref{eqn:PSproof2} we infer
	\begin{equation}
	\liminf_{n \to \infty}  \int_{a}^{b} \p_v \tilde{G}(\xi_n,\dot\xi_n)[\dot\xi - \dot\xi_n + w_n^i] ds \ge 0,
	\end{equation}
	and by the $L^2$ convergence of $w_n^i$ to zero we obtain
	\begin{equation}
	\liminf_{n \to \infty} \int_{a}^{b} \p_v \tilde{G}(\xi_n,\dot\xi_n)[\dot\xi - \dot\xi_n] ds \ge 0.
	\end{equation}
	Since $x_n$ weakly converges to $x$, then $\xi_n$ weakly converges to $\xi$ and we have
	\begin{equation}
	\lim_{n \to \infty}  \int_{a}^{b} \p_v \tilde{G}(\xi_n,\dot\xi)[\dot\xi - \dot\xi_n] ds = 0.
	\end{equation}
	As a consequence,
	\begin{equation}
	\label{eqn:PSproof3}
	\liminf_{n \to \infty}\int_{a}^{b} 
	\left( 
	\p_v \tilde{G}(\xi_n,\dot\xi_n) - 
	\p_v \tilde{G}(\xi_n,\dot\xi)
	\right) 
	[\dot\xi - \dot\xi_n] ds \ge 0
	\end{equation}
	Due to the lack of regularity of $G$ on the zero section, we cannot apply the mean value theorem on the whole interval $[a,b]$.
	Thus we define $\delta_n(s) = d\varphi(\xi_n(s))[\dot\xi_n(s)]$ and $\delta(s) = d\varphi(\xi_n(s))[\dot\xi(s)]$ (note that the differential is evaluated on $\xi_n(s)$ for both $\delta_n$ and $\delta$).
	Let $A_n \subset [a,b]$ be the support of the $L^2$ function $\norm{\delta_n(s)}$ and 
	$A \subset [a,b]$ be the one of $\norm{\delta(s)}$.
	Then set
	\begin{equation}
	\begin{split}
	B_n & = \left\{s \in A_n \cap A: \frac{\delta_n}{\norm{\delta_n}} = - \frac{\delta}{\norm{\delta}} \ a.e.   \right\}, \\
	C_n & = (A_n \cup A) \setminus B_n ,\\
	D_n & = [a,b] \setminus (A_n \cup A).
	\end{split}
	\end{equation}
	On $C_n$, the zero vector is not on the segment joining $\delta_n$ and $\delta$, so we can use the mean value theorem and from \eqref{eqn:PSproof3} we obtain
	\begin{equation}
	\liminf_{n \to \infty} \int_{C_n} \p^2_{vv} \tilde{G}(\xi_n,\sigma \dot{\xi}_n + (1-\sigma)\dot\xi)[\dot\xi_n - \dot\xi,\dot\xi - \dot\xi_n] ds \ge 0,
	\end{equation}
	where $\sigma:C_n \to [0,1]$.
	The previous equation can be written as
	\begin{equation}
	\limsup_{n \to \infty} \int_{C_n} \p^2_{vv} \tilde{G}(\xi_n,\sigma \dot{\xi}_n + (1-\sigma)\dot\xi)[\dot\xi- \dot\xi_n,\dot\xi - \dot\xi_n] ds \le 0.
	\end{equation}
	Since $G$ is a strictly fiberwise convex function, there exists a positive constant $K_1$, which depends also on the local chart, such that
	\begin{multline}
	\limsup_{n \to \infty} \int_{C_n} \lVert \dot\xi - \dot{\xi}_n\rVert^2 ds \\
	\le K_1 \limsup_{n \to \infty} \int_{C_n} \p^2_{vv} \tilde{G}(\xi_n,\sigma \dot{\xi}_n + (1-\sigma)\dot\xi)[\dot\xi- \dot\xi_n,\dot\xi - \dot\xi_n] ds \le 0,
	\end{multline}
	so
	\begin{equation}
	\label{eqn:convCn}
	\lim_{n \to \infty} \int_{C_n} \lVert \dot\xi - \dot{\xi}_n\rVert^2 ds  = 0.
	\end{equation}
	On $B_n$ we can define a function $\lambda_n$ such that $\delta = -\lambda_n \delta_n$.
	Recalling the definition of $\tilde{G}$, from \eqref{eqn:PSproof3} we obtain 
	\begin{multline}
	\liminf_{n \to \infty} \int_{B_n}
	\left(
	d_vG(\varphi(\xi_n),d\varphi(\xi_n)[\dot\xi_n]) -
	d_vG(\varphi(\xi_n),d\varphi(\xi_n)[\dot\xi])
	\right)
	[\delta - \delta_n]
	ds  \\
	=
	- \liminf_{n \to \infty} \int_{B_n}
	(1 + \lambda_n) d_vG(\varphi(\xi_n),\delta_n)[\delta_n] ds \\
	- \liminf_{n \to \infty} \int_{B_n}
	\left(1 + \frac{1}{\lambda_n}\right) d_vG(\varphi(\xi_n),\delta)[\delta] ds \ge 0.\\
	\end{multline}
	Since $G$ is fiberwise homogeneous of degree two, by Euler's theorem the previous equation can be written as
	\begin{equation}
	\limsup_{n \to \infty} \int_{B_n}
	\left(
	2 (1 + \lambda_n) G(\varphi(\xi_n),\delta_n) +
	2 \left(1 + \frac{1}{\lambda_n}\right)G(\varphi(\xi_n),\delta) 
	\right) ds \le 0.
	\end{equation}
	Using \eqref{eqn:boundG}, there exists a positive constant $K_2$, which depends also on the local charts, such that 
	\begin{multline}
	\limsup_{n \to \infty} \int_{B_n} \left(\norm{\delta_n}^2 + \norm{\delta}^2 \right) ds \\
	\le 
	K_2 \limsup_{n \to \infty} \left(
	2 (1 + \lambda_n) G(\varphi(\xi_n),\delta_n) +
	2 \left(1 + \frac{1}{\lambda_n}\right)G(\varphi(\xi_n),\delta) 
	\right) ds \le 0,
	\end{multline}
	so 
	\begin{equation}
	\lim_{n \to \infty} \int_{B_n} \left(\norm{\delta_n}^2 + \norm{\delta}^2 \right) ds = 0.
	\end{equation}
	Since $\varphi$ is a diffeomorphism, there exists a positive constant $K_3$ such that
	\begin{equation}
	\lVert\dot\xi - \dot\xi_n\rVert^2 
	\le K_3 \norm{d\varphi(\xi_n)[\dot\xi - \dot\xi_n]}^2 
	= K_3 \norm{\delta - \delta_n}^2.
	\end{equation}
	As a consequence,
	\begin{multline}
	\label{eqn:convBn}
	\lim_{n \to \infty} \int_{B_n}
	\lVert\dot\xi - \dot\xi_n\rVert^2 ds 
	\le K_3
	\lim_{n \to \infty} \int_{B_n}
	\norm{\delta - \delta_n}^2 ds\\
	\le K_3
	\lim_{n \to \infty} \int_{B_n}
	\left(\norm{\delta_n}^2 + \norm{\delta}^2 \right) ds = 0.
	\end{multline}
	Since both $\delta$ and $\delta_n$ are zero on $D_n$, we have
	\begin{equation}
	\label{eqn:convDn}
	\lim_{n \to \infty} \int_{D_n}
	\lVert\dot\xi - \dot\xi_n\rVert^2 ds 
	\le K_3
	\lim_{n \to \infty} \int_{D_n}
	\norm{\delta - \delta_n}^2 ds = 0
	\end{equation}
	Summing up \eqref{eqn:convCn}, \eqref{eqn:convBn} and \eqref{eqn:convDn} we obtain that
	\begin{equation}
	\lim_{n \to \infty} \int_{a}^b
	\lVert\dot\xi - \dot\xi_n\rVert^2 ds = 0,
	\end{equation}
	and since this holds on every local chart, $\dot\xi_n$ strongly converges to $\dot\xi$ in $L^2$.
	As a consequence, 
	$x_n$ converges strongly to $x$ in $H^{1,2}$.
\end{proof}

\section{The pseudo-gradient vector field}

In this section we prove the existence of a pseudo-gradient vector field for $\J$ on $\Mf$ that is in $\V^-(x,\Mf)$.
Moreover, the flow of such a vector field defines  homotopies for which the Ljusternik and Schnirelmann relative category
is invariant.
In this way, we can prove the existence and multiplicity of \OFGC s using a minimax argument.

We shall need the following notation:
for all $x \in \Mf$ and $r > 0$ we define
$$
B_{r}(x) = \left\{ x \in \Mf: \dist_*(x,x_i) \le r  \right\}
$$
and
$$
U_r(x) = B_r(x) \cup B_r(\mR x),
$$
where $\dist_*$ has been defined in \eqref{eqn:defDist*}.

\begin{definition}
	For a given $x \in \Mf$ and a constant $\mu > 0$, we say that $\J$ has 
	\textit{
		$\V^-$-steepness greater or equal than $\mu$ at $x$ 
	}
	if there exists $\xi \in \V^-(x,\Mf)$ such that
	\begin{itemize}
		\item[i.] 	$\norm{\xi}_* = 1$;
		\item[ii.]  $d\J(x)[\xi] \le -\mu$.
	\end{itemize}
	In this case $\xi$ is a \textit{direction of $\mu$-steep $\V^-$-descent for $\J$ at $x$.}
\end{definition}
We set
\begin{equation}
\mathcal{W}^-(x,\Mf) = 
\left\{
\xi \in \V^-(x,\Mf):
\sprod{\xi(s)}{\nabla\Phi(x(s))} < 0
\text{ if } \Phi(x(s)) = 0, s \in ]0,1[
\right\}.
\end{equation}

\begin{proposition}
	\label{prop:localVecField}
	Set $\mu > 0$.
	If $\J$ has 
	$\V^-$-steepness greater or equal than $\mu$ at $x$, 
	then for any $\epsilon > 0$ there exists a $\rho > 0$ and a $C^1$ vector field $V$ defined in $B_\rho(x)$ such that:
	\begin{itemize}
		\item[(1)] $V(y) \in \mathcal{W}^-(y,\Mf)$,
		\item[(2)] $\norm{V(y)}_* = 1$,
		\item[(3)] $d\J(y)[V(y)] \le -\mu + \epsilon$,
	\end{itemize}
	for all $y \in B_\rho(x)$.
	Moreover, if $F$ is reversible, then $V$ can be defined in $U_\rho(x)$ and it also satisfies
	\begin{itemize}
		\item[(4)] 	$\overline\mR V(y) = V(\mR y)$
	\end{itemize}
	for all $ y \in U_\rho(x)$.
\end{proposition}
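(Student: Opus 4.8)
The plan is to build $V$ by transporting a single steep descent direction at $x$ to curves near $x$, taking care that the transported field stays in the variable cone $\mathcal W^-(\cdot,\Mf)$. \emph{Step 1 (the seed).} By hypothesis there is $\xi_0\in\V^-(x,\Mf)$ with $\norm{\xi_0}_*=1$ and $d\J(x)[\xi_0]\le-\mu$. Fix a small $\eta_1\in(0,\d_0)$ and a function $\chi_0\in H^{1,2}([0,1])$ with $\chi_0(0)=\chi_0(1)=0$ and $\chi_0>0$ on $(0,1)$. From $\xi_0$ one produces $\xi_1$ by subtracting, on the slab $\{s:\Phi(x(s))\ge-\eta_1\}$, the outward $\nabla\Phi$-component of $\xi_0(s)$, adding a small inward push proportional to $\chi_0$, and normalising, so that $\xi_1\in\mathcal W^-(x,\Mf)$, $\norm{\xi_1}_*=1$, $d\J(x)[\xi_1]\le-\mu+\tfrac{\epsilon}{4}$, and — crucially — $\sprod{\xi_1(s)}{\nabla\Phi(x(s))}\le0$ whenever $\Phi(x(s))\ge-\eta_1$. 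The subtraction costs little because the slab $\{-\eta_1\le\Phi(x(\cdot))<0\}$ shrinks to a subset of $C_x$ as $\eta_1\to0$, so by absolute continuity of $\int_0^1\norm{\dot x}^2$ and $\int_0^1\norm{\dot\xi_0}^2$ the $\norm{\cdot}_*$-norm of the subtracted field is small; the push costs little since its coefficient may be taken small.

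\emph{Step 2 (spreading, and the main obstacle).} One must define, for $y\in B_\rho(x)$, a field $V(y)(s)\in T_{y(s)}\M$, depending $C^1$ on $y$, equal to $\xi_1$ when $y=x$, tangent to $\p\Omega$ at $s=0,1$, and strictly inward at every $s\in(0,1)$ with $y(s)\in\p\Omega$. The obstacle is the interplay of the last two requirements near the endpoints: an $H^{1,2}$ field may have $\sprod{\xi_1(s)}{\nabla\Phi(x(s))}$ decaying only like $o(\sqrt s)$ as $s\to0^+$, whereas a correction forced to vanish at $s=0$ decays at least like $\chi_0(s)$, so no pointwise inward push added to the raw transport can be guaranteed to dominate near $s=0,1$. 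The remedy is to transport $\xi_1$ near $\p\Omega$ through a boundary-straightening chart — the device already used in the proof of Proposition~\ref{prop:regularity}: around $x(0)$, around $x(1)$, and around each point of $\p\Omega$ met by $x$, pick a chart $\varphi$ on $U$ with $\varphi(\p\Omega\cap U)\subset\{z^N=0\}$, $\varphi(\Omega\cap U)\subset\{z^N<0\}$ and $d\varphi(\nabla\Phi)$ of constant direction $e_N$, and on the part of $[0,1]$ where $x(s)$ lies in such a chart set
$$
V(y)(s):=(d\varphi_{\varphi(y(s))})^{-1}\bigl(w^1(s),\dots,w^{N-1}(s),\,w^N(s)\bigr),
$$
where $(w^1(s),\dots,w^N(s)):=d\varphi_{\varphi(x(s))}\xi_1(s)-t'\chi_0(s)\,e_N$ and $t'>0$ is small. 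Where $x(s)$ stays at distance $>\eta_1/2$ from $\p\Omega$ — so that $y(s)\notin\p\Omega$ for $\rho$ small and no constraint is active — take instead $V(y)(s)$ to be the orthogonal projection of $\xi_1(s)$ onto $T_{y(s)}\M$. Glue these by a $C^1$ partition of unity in $s$; the overlaps sit where $x(s)$ is at intermediate distance from $\p\Omega$, hence where $y(s)\notin\p\Omega$, so convexity of $\V^-(y,\Mf)$ keeps the glued field admissible there.

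\emph{Step 3 (verification).} In the chart piece, because $d\varphi(\nabla\Phi)\parallel e_N$ and $d\varphi$ sends $T\p\Omega$ into $\{z^N=0\}$, the quantity $\sprod{V(y)(s)}{\nabla\Phi(y(s))}$ is a strictly positive multiple of $w^N(s)=(d\varphi_{\varphi(x(s))}\xi_1(s))^N-t'\chi_0(s)$; at $s=0,1$ this is $0$ (since $\xi_1$ is tangent to $\p\Omega$ there and $\chi_0$ vanishes), giving the endpoint conditions in $\V^-(y,\Mf)$, and at $s\in(0,1)$ with $y(s)\in\p\Omega$ it is $\le-t'\chi_0(s)<0$, because then $x(s)$ lies within $O(\rho)$ of $\p\Omega$, hence in the slab, so $(d\varphi_{\varphi(x(s))}\xi_1(s))^N\le0$ by Step 1. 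Thus $V(y)\in\mathcal W^-(y,\Mf)$ for all $y\in B_\rho(x)$. Since $V(x)$ is $\norm{\cdot}_*$-close to $\xi_1$ for $t'$ small and the chart maps and the projection depend continuously on $y$, continuity of $(y,\zeta)\mapsto d\J(y)[\zeta]$ gives $d\J(y)[V(y)]\le-\mu+\tfrac{\epsilon}{2}$ and $\norm{V(y)}_*$ as close to $1$ as desired for $\rho,t'$ small; replacing $V(y)$ by $V(y)/\norm{V(y)}_*$ — a $C^1$ operation, the norm being positive as $d\J(y)[V(y)]<0$ — yields (1)--(3).

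\emph{Step 4 (the reversible case).} Set $V'(z):=\overline\mR V(\mR z)$ for $z\in B_\rho(\mR x)=\mR(B_\rho(x))$. The computation in the proof of Lemma~\ref{lem:RxCritical} shows $d\J(z)[\overline\mR\zeta]=d\J(\mR z)[\zeta]$, that $\overline\mR$ maps $\mathcal W^-(\mR z,\Mf)$ onto $\mathcal W^-(z,\Mf)$, and $\norm{\overline\mR\zeta}_*=\norm{\zeta}_*$; hence $V'$ satisfies (1)--(3) on $B_\rho(\mR x)$. If $B_\rho(x)\cap B_\rho(\mR x)=\emptyset$, let $V$ be $V$ on $B_\rho(x)$ and $V'$ on $B_\rho(\mR x)$; otherwise glue with a $C^1$ partition of unity $\{\phi,1-\phi\}$ subordinate to $\{B_\rho(x),B_\rho(\mR x)\}$ chosen so that $\phi(y)+\phi(\mR y)=1$. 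Convexity of $\mathcal W^-(y,\Mf)$ and $\V^-(y,\Mf)$, linearity of $d\J(y)$, and $\overline\mR$-invariance of $\norm{\cdot}_*$ then show the resulting renormalised $C^1$ field on $U_\rho(x)$ still satisfies (1)--(3), and the symmetry of $\phi,V,V'$ gives $\overline\mR V(y)=V(\mR y)$, which is (4).
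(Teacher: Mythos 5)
The paper does not actually prove Proposition~\ref{prop:localVecField}: it delegates the proof to \cite[Proposition~4.3]{Giambo2018}. So I am evaluating your argument on its own merits, not against a proof in this paper.

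Your overall scheme (seed direction, slab normalization, boundary--straightening charts, gluing, $\mR$--equivariance) is a sensible and fairly standard plan for a local pseudo--gradient in a manifold with boundary, and Steps~2--4 are coherent once Step~1 is granted. The difficulty you flag at the start of Step~2 is real, and the chart device does dispose of it, because in the straightened chart the transported $e_N$--component of $\xi_1$ picks up no additive error, only a positive multiplicative factor.

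The gap is in Step~1, and it is not cosmetic. You define $\xi_1$ by ``subtracting, on the slab $\{s:\Phi(x(s))\ge-\eta_1\}$, the outward $\nabla\Phi$--component of $\xi_0$'' and assert that the $\norm{\cdot}_*$--norm of the subtracted field is small ``by absolute continuity''. As written, the subtracted field is $g_+(s)\nu(s)\,\mathbb{1}_{\{\Phi(x(\cdot))\ge-\eta_1\}}(s)$ with $g=\sprod{\xi_0}{\nu}$; this has jump discontinuities at the slab boundary (there is no reason for $g_+$ to vanish where $\Phi(x(s))=-\eta_1$), so it is not in $H^{1,2}$ and in particular not an admissible perturbation of $\xi_0$. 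To fix this one must insert a $C^1$ cutoff $\psi$, e.g. $\psi=\psi_0\bigl(\Phi(x(s))/\eta_1\bigr)$ with $\psi_0$ a fixed profile, and then $\dot\psi$ has size $\sim\eta_1^{-1}\norm{\dot x}$ on a transition set $T_{\eta_1}=\{-2\eta_1\le\Phi(x(\cdot))\le-\eta_1\}$. The offending term in $\norm{\psi\,g_+\nu}_*$ is
\begin{equation}
\int_{T_{\eta_1}}\lvert\dot\psi\rvert^2 g_+^2\,ds
\ \lesssim\ \frac{1}{\eta_1^2}\;\Bigl(\sup_{T_{\eta_1}} g_+\Bigr)^2\int_{T_{\eta_1}}\norm{\dot x}^2 ds ,
\end{equation}
and absolute continuity of $\int\norm{\dot x}^2$ and $\int\norm{\dot\xi_0}^2$ alone does not make this small: it only gives $\int_{T_{\eta_1}}\norm{\dot x}^2\to0$, with no rate, while the prefactor $\eta_1^{-2}$ diverges, and the decay of $\sup_{T_{\eta_1}}g_+$ as $\eta_1\to0$ is governed by the H\"older modulus of $g_+$ near $C_x$, not by anything you have estimated. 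So ``the subtraction costs little'' is an assertion, not a proof, and it is exactly the point where an argument of this type can fail (one cannot rule out, without further structure on $C_x$ or on $g$, that every admissible $\eta_1$ forces a large $\norm{\cdot}_*$--correction). You need either a genuinely different normalization (for instance a transport that projects onto $\V^-(y,\Mf)$ by construction, or a cutoff whose derivative is weighted so that the product with $g_+$ is controlled by $\int_{\text{slab}\setminus C_x}\lvert\dot g_+\rvert^2$) or a quantitative lemma relating $\sup_{T_{\eta_1}}g_+$ and the mass of $\dot g_+$ on $T_{\eta_1}$ to $\eta_1$. Until Step~1 is repaired, Steps~2--4 rest on an object that has not been shown to exist.

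A secondary, smaller remark: in Step~4 the cutoff $\phi$ satisfying $\phi(y)+\phi(\mR y)=1$ must be $C^1$ on $U_\rho(x)$; $\dist_*$ is Lipschitz but not $C^1$, so $\phi$ should be built from a smoothed version of $\dist_*(\cdot,x)-\dist_*(\cdot,\mR x)$, or one should invoke a $C^1$ partition of unity on a Banach manifold directly. This is routine, but worth stating since (4) requires the equivariance to hold exactly, not approximately.
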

The proof of this proposition can be found in \cite[Proposition 4.3]{Giambo2018}.

\begin{proposition}
	\label{prop:vectorFieldW}
	Let $C \subset \Mf$ be closed set that does not contain any $\V^-$-critical curve for $\J$ and such that $\J(C) \subset [\d_m, \delta_M]$, where $\d_m$ and $\d_M$ are defined in \eqref{eqn:defBarDelta} and \eqref{eqn:deltaM} respectively.
	Then there exists a constant $\mu_C > 0$ and a locally Lipschitz continuous vector field $W$ defined on $C$ such that
	\begin{itemize}
		\item[(1)] $W(x) \in \mathcal{W}^-(x,\Mf)$,
		\item[(2)] $\norm{W(x)}_* \le 1$,
		\item[(3)] $d\J(x)[W(x)] \le -\mu_C$,
	\end{itemize}
	for all $x \in C$.
	Moreover, if $F$ is reversible and $C$ is $\mR$-invariant, then $W$ also satisfies
	\begin{itemize}
		\item[(4)] $\overline\mR W(x) = W(\mR x)$
	\end{itemize}
\end{proposition}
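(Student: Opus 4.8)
The plan is to obtain $W$ by gluing together, by means of a locally Lipschitz partition of unity, the local $C^1$ vector fields provided by Proposition \ref{prop:localVecField}; the only genuinely global ingredient is a uniform lower bound on the $\V^-$-steepness of $\J$ along $C$, which will come from the Palais--Smale condition. So the first step is to show that there exists $\mu_C > 0$ such that $\J$ has $\V^-$-steepness greater or equal than $\mu_C$ at every $x \in C$. Arguing by contradiction, if no such $\mu_C$ exists one finds a sequence $(x_n)_n \subset C$ such that $d\J(x_n)[\xi] \ge -\tfrac1n$ for every $\xi \in \V^-(x_n,\Mf)$ with $\norm{\xi}_* = 1$. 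Since $\J(C) \subset [\d_m,\d_M]$, after extracting a subsequence we may assume $\J(x_n) \to c \in [\d_m,\d_M]$, so $(x_n)_n$ is a $\V^-$-Palais--Smale sequence for $\J$ at level $c$; by Proposition \ref{prop:PS} it converges strongly in $H^{1,2}$ to some $x$, and $x \in C$ because $C$ is closed. It then remains to verify that $x$ is a $\V^-$-critical curve for $\J$, contradicting the hypothesis on $C$: given $\xi \in \V^-(x,\Mf)$ one transports it to a sequence $\xi_n \in \V^-(x_n,\Mf)$ with $\xi_n \to \xi$ in $H^{1,2}$ and $\norm{\xi_n}_* \to \norm{\xi}_*$, using the exponential map of an auxiliary Riemannian metric on $\oOmega$ for which $\p\Omega$ is totally geodesic, exactly as in the construction of the vector fields $V_n$ in the proof of Proposition \ref{prop:PS}; passing to the limit in $d\J(x_n)[\xi_n/\norm{\xi_n}_*] \ge -\tfrac1n$ and using that $\J$ is of class $C^1$ (with the bounds of Remark \ref{rmk:aDef}) gives $d\J(x)[\xi] \ge 0$.

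Once uniform steepness is established, for each $x \in C$ Proposition \ref{prop:localVecField} applied with $\epsilon = \mu_C/2$ yields $\rho_x > 0$ and a $C^1$ vector field $V_x$ on $B_{\rho_x}(x)$ with $V_x(y) \in \mathcal{W}^-(y,\Mf)$, $\norm{V_x(y)}_* = 1$ and $d\J(y)[V_x(y)] \le -\mu_C/2$ for all $y \in B_{\rho_x}(x)$. The family $\{B_{\rho_x}(x)\}_{x \in C}$ covers $C$; since $\Mf$ is metrizable, hence paracompact, one chooses a locally finite open refinement $\{B_i\}_{i \in I}$, with associated vector fields $V_i$, and a locally Lipschitz partition of unity $\{\psi_i\}_{i \in I}$ subordinate to it, and sets $W = \sum_{i \in I} \psi_i V_i$. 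Locally this is a finite sum of $C^1$ fields with Lipschitz coefficients, so $W$ is locally Lipschitz. Property (2) follows from convexity of $\norm{\cdot}_*$, since $\norm{W(x)}_* \le \sum_i \psi_i(x)\norm{V_i(x)}_* = 1$. For property (1), every $V_i(x)$ with $\psi_i(x) > 0$ lies in the convex set $\mathcal{W}^-(x,\Mf)$, and $\sum_i \psi_i(x) = 1$ forces at least one such index, hence $W(x) \in \mathcal{W}^-(x,\Mf)$. Property (3) follows from linearity of $d\J(x)$: $d\J(x)[W(x)] = \sum_i \psi_i(x)\,d\J(x)[V_i(x)] \le -\mu_C/2$; renaming $\mu_C/2$ as $\mu_C$ gives the claim.

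In the reversible case, assuming moreover that $C$ is $\mR$-invariant, one runs the same construction using the $\mR$-invariant neighbourhoods $U_{\rho_x}(x)$ and the fields $V_x$ provided on them by the last part of Proposition \ref{prop:localVecField}, obtaining a locally Lipschitz $W$ satisfying (1)--(3). One then symmetrizes: $\widetilde W(x) = \tfrac12\bigl(W(x) + \overline\mR W(\mR x)\bigr)$. Since $\overline\mR$ and $\mR$ are involutions, a direct check gives $\overline\mR \widetilde W(x) = \widetilde W(\mR x)$, which is property (4). Because $\overline\mR$ preserves $\norm{\cdot}_*$ and maps $\mathcal{W}^-(\mR x,\Mf)$ onto $\mathcal{W}^-(x,\Mf)$, properties (1)--(2) pass to $\widetilde W$ by convexity; and the computation in the proof of Lemma \ref{lem:RxCritical} (valid since $F$ is reversible) yields $d\J(x)[\overline\mR W(\mR x)] = d\J(\mR x)[W(\mR x)] \le -\mu_C$, whence $d\J(x)[\widetilde W(x)] \le -\mu_C$.

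The delicate point is the first step, and specifically the verification that the strong $H^{1,2}$-limit of the Palais--Smale sequence is a genuine $\V^-$-critical curve: one must approximate an arbitrary admissible infinitesimal variation of the limit curve $x$ by admissible variations of the $x_n$, which is subtle precisely on the contact set $\{s : x(s) \in \p\Omega\}$ and is what forces the use of an auxiliary metric making $\p\Omega$ totally geodesic. Everything else is the routine pseudo-gradient bookkeeping.
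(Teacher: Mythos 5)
Your proposal is correct and follows essentially the same route as the paper: a uniform steepness bound on $C$ obtained by contradiction from the $\V^-$-Palais--Smale condition (Proposition \ref{prop:PS}), followed by gluing the local fields of Proposition \ref{prop:localVecField} with a locally Lipschitz partition of unity subordinate to a locally finite covering by the balls $B_{\rho_x}(x)$ (resp.\ the sets $U_{\rho_x}(x)$ in the reversible case). The only cosmetic difference is in step (4): the paper obtains $\mR$-equivariance automatically by using the $\mR$-invariant sets $U_{\rho_{x_i}}(x_i)$ and the $\mR$-invariant distance $\dist_*$ in the partition of unity, whereas you glue first and then symmetrize $W$ explicitly; both work, and your verification that the strong limit of the Palais--Smale sequence is genuinely $\V^-$-critical is actually more detailed than the paper's.
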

\begin{proof}
	There exists a constant $\mu_C > 0$ such that $\J$ has $\V^-$-steepness equal or greater than $2\mu_C$ at every $x \in C$.
	In fact, if such a constant does not exist, then there exists a Palais-Smale sequence $(x_n)_n \subset C$ at level $c \in [\delta_m,\delta_M]$.
	By Proposition \ref{prop:PS}, there exists a $\V^-$-critical curve for $\J$ in $C$, which is an absurd.
	
	Then, setting $\epsilon = \mu_C$, for all $x \in C$ we can take $\rho_x$ as in Proposition \ref{prop:localVecField} and a vector field $V_x$ defined in $B_{\rho_x}(x)$ satisfying (1)-(3) of 
	Proposition \ref{prop:localVecField}, thus
	$$
	d\J(y)[V(y)] \le -\mu_C,
	\qquad 
	\text{for all } y \in B_{\rho_x}(x).
	$$
	Consider the open covering 
	$
	\left\{
	B_{\rho_x}(x)
	\right\}_{x \in C}
	$
	of $C$.
	Because $C$ is a metric space, it is paracompact and there exists a locally finite covering 
	$
	\left\{B_{\rho_{x_i}}(x_i)\right\}_{i \in J}
	$
	of $C$.
	For all $y \in C$ and $i \in J$, we define
	$$
	\varrho_i(y) = \dist_*(y, C \setminus B_{\rho_{x_i}}(x_i) )
	$$
	and
	$$
	\beta_i(y) = \frac{\varrho_i(y)}{\sum_{i \in J} \varrho_i(y) }.
	$$
	Then $\sum_{i \in J} \beta_i(y) = 1$ for all $y \in C$ and we define our desired vector field as
	\begin{equation}
	W(y) = \sum_{i \in J} \beta_i(y) V_{x_i}(y).
	\end{equation}
	If $F$ is reversible and $C$ is $\mR$-invariant,
	Proposition \ref{prop:localVecField} ensures that for all $x \in C$ there exists $\rho_x > 0$ a vector field defined in $U_{\rho_x}(x)$ such that 
	$$
	d\J(y)[V(y)] \le -\mu_C
	\quad \text{and} \quad
	\overline\mR V(y) = V(\mR y),
	\qquad 
	\text{for all } y \in U_{\rho_x}(x).
	$$
	Then we can consider a locally finite covering 
	$
	\left\{U_{\rho_{x_i}}(x_i)\right\}_{i \in J}
	$
	of $C$, and defining 
	$$
	\varrho_i(y) = \dist_*(y, C \setminus U_{\rho_{x_i}}(x_i) )
	$$
	we obtain our desired vector field following the previous construction.
\end{proof}

\section{Deformation Lemmas}
\begin{definition}
	A number $c > 0$ is \textit{a $\V^-$-critical value for $\J$ on $\Mf$ }if there exists $x \in \Mf$ that is a $\V^-$-critical curve for $\J$ on $\Mf$ such that $\J(x) = c$.
	Otherwise, $c$ is said \textit{$\V^-$-regular value for $\J$ on $\Mf$}.
\end{definition}
For any $c >0$, we set
\begin{equation}
\J^c =
\left\{
x \in \Mf: \J(x) \le c
\right\}
\end{equation}

\begin{definition}
	Let $\mathcal{N}$ be a subset of $\Mf$.
	Then a continuous function $h:[0,1]\times \mathcal N \to \mathcal N$ is said \textit{admissible homotopy} if
	\begin{itemize}
		\item[i)] $h(0,x) = x$ for all $x \in \mathcal{N}$;
		\item[ii)] $h(\tau,x) \in \Cf_0$ for all $x \in \mathcal{N}\cap \Cf_0$ and $\tau \in [0,1]$;
		\item[iii)] if $x \notin \mathcal{N}\cap \Cf_0$, then $h(\tau,x) \notin \mathcal{N}\cap \Cf_0$ , for all $\tau \in [0,1]$;
		\item[iv)] if $F$ is a reversible, $\mathcal{N}$ must be $\mR$-invariant and $h(\tau,\mR x) = \mR h(\tau,x)$ for every $\tau \in [0,1]$.
	\end{itemize}
\end{definition}

\begin{lemma}
	\label{lem:deformation1}
	Let $c > 0$ be a $\V^-$-regular value for $\J$ on $\Mf$.
	Then there exists an $\epsilon > 0$ and an admissible homotopy $h:[0,1] \times \J^{c+ \epsilon} \to \J^{c + \epsilon}$ such that
	$$
	h(1,\J^{c+ \epsilon}) \subset \J^{c - \epsilon}.
	$$
\end{lemma}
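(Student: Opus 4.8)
The plan is to realize $h$ as a time-rescaled flow of a globally defined pseudo-gradient vector field for $\J$, obtained from Proposition~\ref{prop:vectorFieldW} by a cut-off localized near the level $c$.

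\emph{Construction of the vector field.} Since $c$ is a $\V^-$-regular value, Proposition~\ref{prop:PS} makes the set of $\V^-$-critical curves of $\J$ compact, so there is $\e_0\in(0,c/2)$ such that $C_0=\J^{-1}([c-2\e_0,c+2\e_0])$ contains no $\V^-$-critical curve; shrinking $\e_0$ we may also assume $\J(C_0)\subset[\d_m,\d_M]$ (which holds for the minimax levels $c$ at which this lemma is used). Fix $\e\in(0,\e_0)$ and let $C=\J^{-1}([c-2\e,c+2\e])\subset C_0$; this is closed, and since $\J\circ\mR=\J$ when $F$ is reversible (an elementary change of variable, cf.\ the proof of Lemma~\ref{lem:RxCritical}) it is $\mR$-invariant. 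Proposition~\ref{prop:vectorFieldW} then provides a constant $\mu_C>0$ and a locally Lipschitz $W$ on $C$ with $W(x)\in\mathcal{W}^-(x,\Mf)$, $\norm{W(x)}_*\le1$ and $d\J(x)[W(x)]\le-\mu_C$ for all $x\in C$, and with $\overline\mR W(x)=W(\mR x)$ when $F$ is reversible. I would then pick a Lipschitz function $\chi\colon\R\to[0,1]$ with $\chi\equiv1$ on $[c-\e,c+\e]$ and $\chi\equiv0$ outside $]c-2\e,c+2\e[$, set $\psi=\chi\circ\J$ (so $\psi$ is locally Lipschitz, supported inside $C$, and $\psi=0$ on $\{\J\le c-2\e\}$) and define $\widetilde W=\psi W$, extended by $0$ to all of $\Mf$. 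Since $\V^-(x,\Mf)$ is a convex cone and $\psi\ge0$, one gets $\widetilde W(x)\in\V^-(x,\Mf)$ with $\norm{\widetilde W(x)}_*\le1$ for every $x$, $\widetilde W(x)\in\mathcal{W}^-(x,\Mf)$ wherever $\psi(x)>0$, and $\overline\mR\widetilde W(x)=\widetilde W(\mR x)$ (because $\psi(\mR x)=\psi(x)$).

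\emph{The flow and the energy estimate.} I would next consider the Cauchy problem $\frac{\partial\eta}{\partial\tau}=\widetilde W(\eta)$, $\eta(0,x)=x$, in the Hilbert manifold $H^{1,2}([0,1],\M)$. As $\widetilde W$ is locally Lipschitz and uniformly bounded, $\eta$ is defined and jointly continuous on $[0,+\infty)\times\Mf$; and since $\widetilde W(x)$ is tangent to $\p\Omega$ at $s=0,1$, points strictly into $\oOmega$ at every $s\in\,]0,1[$ with $x(s)\in\p\Omega$ and $\psi(x)>0$, and vanishes where $\psi(x)=0$, a standard invariance argument shows that $\eta(\tau,\cdot)$ maps $\Mf$ into $\Mf$. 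I expect this well-posedness of the flow on the non-smooth set $\Mf$ to be the main obstacle, and it is precisely what the strict requirement $W(x)\in\mathcal{W}^-(x,\Mf)$, rather than merely $W(x)\in\V^-(x,\Mf)$, is meant to secure. Along an orbit, $\frac{d}{d\tau}\J(\eta(\tau,x))=\psi(\eta)\,d\J(\eta)[W(\eta)]\le-\mu_C\,\psi(\eta)\le0$, so $\J$ is non-increasing along the flow; in particular $\eta(\tau,\cdot)$ preserves every sublevel set of $\J$, and as long as $\eta(\tau,x)\in\J^{-1}([c-\e,c+\e])$ (where $\psi=1$) one has $\J(\eta(\tau,x))\le\J(x)-\mu_C\tau$.

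\emph{Definition of $h$ and verification.} Finally I would set $T=2\e/\mu_C$ and $h(\tau,x)=\eta(T\tau,x)$ for $(\tau,x)\in[0,1]\times\J^{c+\e}$. Continuous dependence on initial data makes $h$ continuous; $h(0,\cdot)=\mathrm{id}$; and monotonicity of $\J$ along the flow keeps $h$ valued in $\J^{c+\e}$. For the energy drop, if $\J(x)\le c-\e$ there is nothing to prove, while if $\J(x)\in\,]c-\e,c+\e]$ then, as long as $\J(\eta(\tau,x))>c-\e$ it lies in $]c-\e,c+\e]$, so $\psi(\eta)=1$ and $\J(\eta(\tau,x))\le c+\e-\mu_C\tau$; hence $\J(\eta(\cdot,x))$ reaches $c-\e$ by time $T$, and by monotonicity $\J(h(1,x))\le c-\e$, i.e.\ $h(1,\J^{c+\e})\subset\J^{c-\e}$. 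For admissibility, note that by \eqref{eqn:boundG} $\J(y)=0$ forces $\dot y\equiv0$, so that $\Cf_0=\{y\in\Mf:\J(y)=0\}$; thus $\psi=0$ on $\Cf_0$, the orbit through any $x\in\Cf_0$ is stationary, and $h(\tau,x)=x\in\Cf_0$, which is condition (ii); and if $x\notin\Cf_0$ then $\J(x)>0$, and since $\widetilde W$ vanishes on $\{\J\le c-2\e\}$ and solutions are unique, the orbit of $x$ never descends below level $\min\{\J(x),c-2\e\}>0$, so $h(\tau,x)\notin\Cf_0$, which is condition (iii). Condition (iv) follows when $F$ is reversible from $\overline\mR\widetilde W(x)=\widetilde W(\mR x)$ and uniqueness for the Cauchy problem, which give $\mR\,h(\tau,x)=h(\tau,\mR x)$, together with the $\mR$-invariance of $\J^{c+\e}$. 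Apart from the flow-invariance of $\Mf$, what remains is the routine bookkeeping of the cut-off and of the ODE's dependence on initial conditions.
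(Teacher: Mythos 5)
Your proposal is correct and follows essentially the same route as the paper: a locally Lipschitz pseudo-gradient field from Proposition \ref{prop:vectorFieldW}, cut off near the level $c$ via a function of $\J$, integrated to a flow whose uniform decay rate $\mu_C$ yields the drop from $\J^{c+\e}$ to $\J^{c-\e}$, with admissibility checked through $\Cf_0=\{\J=0\}$ and $\mR$-equivariance. The only differences are cosmetic (a symmetric two-sided cut-off plus time rescaling $T=2\e/\mu_C$, versus the paper's one-sided cut-off with $\e=\min\{\bar\e/2,\mu_C/2\}$), and you are in fact more explicit than the paper about the hypothesis $\J(C)\subset[\d_m,\d_M]$ and the flow-invariance of $\Mf$.
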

\begin{proof}
	By the Proposition \ref{prop:PS}, there exists $\bar\epsilon > 0$ such that $c - \bar\epsilon > 0$ and there are no $\V^-$-critical curves on
	$C = \J^{-1}([c - \bar\epsilon,c + \bar\epsilon])$.
	By Proposition \ref{prop:vectorFieldW}, there exists a vector field $W$ defined on $C$ and a constant $\mu_C > 0$ such that 
	$$
	d\J(x)[W(x)] \le - \mu_C,
	\qquad \forall x \in C.
	$$
	Let $\chi:\R^+ \to [0,1]$ be the piecewise affine function such that
	$$
	\chi(t) = 0, \quad \text{if } t \le c - \bar\epsilon
	\quad
	\text{and}
	\quad 
	\chi(t) = 1, \quad \text{if } t \ge c - \frac{\bar\epsilon}{2}.
	$$
	Then we define our desired homotopy $h$ as the solution of the Cauchy problem
	\begin{equation}
	\left\lbrace
	\begin{array}{l}
	\displaystyle\frac{\p h}{\p \tau}(\tau,x) = \chi\left(\J(h(\tau,x))\right)	W(h(\tau,x)) \\ 
	$ $  \\
	h(0,x)  = x
	\end{array}
	\right.
	\end{equation}
	if $\J(x) \in [c - \bar\epsilon, c + \bar\epsilon]$ and
	$$
	h(\tau, x) = x, \quad \forall \tau \in [0,1] \quad \text{if } \J(x) \le c - \bar\epsilon.
	$$
	We can ensure that $h(1,\J^{c + \epsilon}) \subset \J^{c- \epsilon}$ setting $\epsilon = \min\{\frac{\bar\epsilon}{2}, \frac{\mu_C}{2}\}$.
	Indeed, if $\epsilon \le \frac{\bar\epsilon}{2}$, then $\chi(t) = 1$ for any $t \in [c- \epsilon, c + \epsilon]$.
	Consequently, if $h (\tau, x) \in \J^{-1}([c-\epsilon,c + \epsilon])$ then
	$$
	d\J(h(\tau,x))\left[ \frac{\p h}{\p \tau}(\tau,x) \right]
	= 
	d\J(h(\tau,x))[W(h(\tau,x))] \le \mu_C,
	$$
	hence
	$$
	\J(h(\tau,x)) \le \J(h(0,x)) - \mu_C \tau \le c + \epsilon - \mu \tau.
	$$
	From the last inequality we infer that if $\epsilon \le \frac{\mu_C}{2}$, then $\J(h(1,x)) \le c - \epsilon$ for all $x \in \J^{-1}([c-\epsilon,c + \epsilon])$.
	
	The homotopy $h:h:[0,1] \times \J^{c+ \epsilon} \to \J^{c + \epsilon}$ defined above is an admissible homotopy.
	In fact, $h(0,x) = x$ for all $x \in \J^{c + \epsilon}$ and, if $x \in \Cf_0$ then $\J(x) = 0$ and $h(\tau,x) \in \Cf_0$ for all $\tau \in [0,1]$.
	Furthermore, $h$ does not move the curves in $\J^{c - \bar\epsilon}$, thus if $x \notin \J^{c + \epsilon }\cap \Cf_0$, then $h(\tau,x) \notin \J^{c + \epsilon }\cap \Cf_0$ for all $\tau \in [0,1]$.
	Finally, if $F$ is a reversible Finsler metric, then $\J^{c + \epsilon}$ is $\mR$-invariant and $h(\tau,\mR x) = \mR h(\tau, x)$ for all $x \in \J^{c + \epsilon}$ and $\tau \in [0,1]$, since $\overline\mR W(x) = W(\overline{\mR} x)$ for all $x \in \J^{c + \epsilon} \subset \Mf$ by Proposition \ref{prop:vectorFieldW}.
\end{proof}

From now on, let us assume that the number of non-constant $\V^-$-critical curves for $\J$ on $\Mf$ is finite. Otherwise, by Proposition \ref{prop:V-crit-OFG}, Theorem \ref{teo:mainTeo} is trivially true.

Let us fix an $r_* > 0$ such that
\begin{itemize} 
	\item $\overline{U_{r_*}(x_i)} \cap \overline{U_{r_*}(x_j)} = \emptyset$ for every $i\ne j$;
	\item any $\overline{U_{r_*}(x_i)}$ is contractible in itself; 
	\item any $\overline{U_{r_*}(x_i)}$ does not include constant curves.
\end{itemize}
Thus, we define
\begin{equation}
\mathcal{O}^* = \bigcup_{i = 1,\dots,k} 
U_{r^*}(x_i).
\end{equation}
We remark that, if $F$ is reversible, then $\mathcal{O}^*$ is $\mR$-invariant by Lemma \ref{lem:RxCritical}.

Using the same procedure exploited in the proof of Lemma \ref{lem:deformation1}, we obtain the following result.
\begin{lemma}
	\label{lem:deformation2}
	Assume that the number of non-constant $\V^-$-critical curves for $\J$ on $\Mf$ is finite and let $c > 0$ be a $\V^-$-critical value for $\J$ on $\Mf$.
	Then there exists an $\epsilon > 0$ and an admissible homotopy $h:[0,1] \times \J^{c+ \epsilon} \to \J^{c + \epsilon}$ such that
	$
	h(1,\J^{c+ \epsilon} \setminus \mathcal{O}^*) \subset \J^{c - \epsilon}.
	$
\end{lemma}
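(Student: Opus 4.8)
The plan is to reproduce the construction in the proof of Lemma~\ref{lem:deformation1}, replacing the strip $\J^{-1}([c-\bar\epsilon,c+\bar\epsilon])$ on which the pseudo-gradient field is built by its intersection with the complement of a neighbourhood of the critical curves, so that Proposition~\ref{prop:vectorFieldW} still applies. First I would fix $\bar\epsilon>0$ with $c-\bar\epsilon>0$, with $[c-\bar\epsilon,c+\bar\epsilon]\subset[\d_m,\d_M]$, and so small that $c$ is the only $\V^-$-critical value of $\J$ on $\Mf$ in $[c-\bar\epsilon,c+\bar\epsilon]$; this is possible because, by hypothesis, there are only finitely many non-constant $\V^-$-critical curves, hence finitely many positive $\V^-$-critical values, and the constant curves sit at level $0$. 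Then I would fix two nested neighbourhoods of the critical curves, $\mathcal O_1=\bigcup_{i=1}^{k}U_{r_*/3}(x_i)$ and $\mathcal O_2=\bigcup_{i=1}^{k}U_{2r_*/3}(x_i)$, each contained in the interior of the next and of $\mathcal O^*$, so that $\dist_*(\Mf\setminus\mathcal O^*,\mathcal O_2)\ge r_*/3$. Since every $\V^-$-critical curve whose $\J$-value lies in $[c-\bar\epsilon,c+\bar\epsilon]$ is one of the $x_i$ and therefore belongs to $\mathcal O_1$, the set $C:=\J^{-1}([c-\bar\epsilon,c+\bar\epsilon])\setminus\mathcal O_1$ is closed, contains no $\V^-$-critical curve and satisfies $\J(C)\subset[\d_m,\d_M]$; moreover, when $F$ is reversible we have $\J\circ\mR=\J$ and each $\mathcal O_j$ is $\mR$-invariant, hence so is $C$.

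Proposition~\ref{prop:vectorFieldW} then supplies a constant $\mu_C>0$ and a locally Lipschitz vector field $W$ on $C$ with $W(x)\in\mathcal W^-(x,\Mf)$, $\norm{W(x)}_*\le1$, $d\J(x)[W(x)]\le-\mu_C$, and $\overline\mR W(x)=W(\mR x)$ in the reversible case. I would pick a locally Lipschitz $\theta\colon\Mf\to[0,1]$ (chosen $\mR$-invariant if $F$ is reversible, using the $\mR$-invariance of the $\mathcal O_j$) with $\theta\equiv0$ on $\overline{\mathcal O_1}$ and $\theta\equiv1$ on $\Mf\setminus\mathcal O_2$, and set $\widetilde W=\theta W$, extended by $0$ on $\mathcal O_1$. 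Then $\widetilde W$ is locally Lipschitz on $\J^{-1}([c-\bar\epsilon,c+\bar\epsilon])$, takes values in $\V^-(\cdot,\Mf)$ --- indeed in $\mathcal W^-$ wherever it does not vanish, $\mathcal W^-$ being a convex cone --- satisfies $\norm{\widetilde W}_*\le1$, and $d\J(x)[\widetilde W(x)]\le-\mu_C$ whenever $\theta(x)=1$, i.e.\ on $\Mf\setminus\mathcal O_2$. With $\chi$ the piecewise affine cutoff of the proof of Lemma~\ref{lem:deformation1}, define $h$ exactly as there: the flow of $\chi(\J(h))\,\widetilde W(h)$ issued from $x$ when $\J(x)\in[c-\bar\epsilon,c+\bar\epsilon]$, and $h(\tau,x)=x$ when $\J(x)\le c-\bar\epsilon$. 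The two prescriptions agree on $\{\J=c-\bar\epsilon\}$, $h$ is continuous on $\J^{c+\epsilon}$ and maps it into itself since $\tfrac{d}{d\tau}\J(h)=\chi(\J(h))\,d\J(h)[\widetilde W(h)]\le0$, and $h$ is an admissible homotopy for the same reasons as in Lemma~\ref{lem:deformation1}: it fixes a sublevel $\J^{c-\bar\epsilon}$ that contains $\Cf_0$, does not increase $\J$ (so it preserves $\Cf_0$ and its complement), and in the reversible case the $\mR$-equivariance of $\chi(\J(\cdot))\widetilde W$ and uniqueness for the Cauchy problem give $h(\tau,\mR x)=\mR h(\tau,x)$.

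It remains to choose $\epsilon$ and to prove $h(1,\J^{c+\epsilon}\setminus\mathcal O^*)\subset\J^{c-\epsilon}$. I would take $\epsilon=\min\{\bar\epsilon/2,\ \mu_C/2,\ \mu_C r_*/6\}$ and argue as follows. Let $x\in\J^{c+\epsilon}\setminus\mathcal O^*$. Since $\norm{\widetilde W}_*\le1$, in time $\tau$ the trajectory $h(\cdot,x)$ moves a $\dist_*$-distance at most $\tau$ from $x$, and as $\dist_*(\Mf\setminus\mathcal O^*,\mathcal O_2)\ge r_*/3$ this forces $h(\tau,x)\notin\mathcal O_2$ for every $\tau\in[0,r_*/3]$. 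On that interval $\theta(h)=1$, hence $d\J(h)[\widetilde W(h)]\le-\mu_C$, so the energy decreases at rate at least $\mu_C$ for as long as $\J(h)\ge c-\tfrac{\bar\epsilon}{2}$ (where $\chi(\J(h))=1$). Since $\J(h(0,x))\le c+\epsilon$ and $2\epsilon/\mu_C\le\min\{r_*/3,1\}$ by the choice of $\epsilon$, the energy reaches the level $c-\epsilon$ within time $2\epsilon/\mu_C$ (or drops below $c-\tfrac{\bar\epsilon}{2}\le c-\epsilon$ even sooner), while the trajectory is still outside $\mathcal O_2$; from that moment on $\J(h)\le c-\epsilon$ is preserved because $\J(h)$ is non-increasing, whence $\J(h(1,x))\le c-\epsilon$.

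The only genuinely new point --- and the one I expect to require care --- is this last step: unlike in Lemma~\ref{lem:deformation1}, the pseudo-gradient field must be truncated on a neighbourhood $\mathcal O_1$ of the critical curves, where Proposition~\ref{prop:vectorFieldW} gives no control, and one must exclude that a trajectory starting outside $\mathcal O^*$ drifts into the truncation region before its energy has fallen below $c-\epsilon$. This is exactly what the buffer $\mathcal O_2$ between $\mathcal O^*$ and the truncation, the $\dist_*$-speed bound $\norm{\widetilde W}_*\le1$, and the smallness of $\epsilon$ relative to $\mu_C$ and $r_*$ are designed to ensure; everything else is a faithful transcription of the proof of Lemma~\ref{lem:deformation1}.
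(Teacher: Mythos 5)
Your proof is correct and follows exactly the route the paper intends: the paper's ``proof'' of Lemma~\ref{lem:deformation2} is just the remark that one repeats the procedure of Lemma~\ref{lem:deformation1}, and your construction (truncating the pseudo-gradient field of Proposition~\ref{prop:vectorFieldW} near the critical curves and using the buffer $\mathcal O_2$, the speed bound $\norm{\widetilde W}_*\le 1$, and the smallness of $\epsilon$ relative to $\mu_C$ and $r_*$ to guarantee the energy drops below $c-\epsilon$ before a trajectory starting outside $\mathcal O^*$ can reach the truncation region) is precisely the standard argument being invoked. The only cosmetic adjustment needed is to define $\mathcal O_1$, $\mathcal O_2$ with strict inequalities (open balls) so that the set $C$ fed into Proposition~\ref{prop:vectorFieldW} is closed.
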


\section{The minimax principle}
In this section, we assume that $F$ is a reversible Finsler metric.
Otherwise, all the results can be applied with slight modifications that we will show later.
Let $\mathcal{D}$ be the set of all closed $\mR$-invariant subsets of $\Mf$.
By \eqref{eqn:relCatReversible}, we have that
\begin{equation}
\cat_{\tilde\Mf,\tilde\Cf_0}\tilde\Mf \ge 
\cat_{\tilde\Cf,\tilde\Cf_0} \tilde\Cf \ge N,
\end{equation}
so the sets
\begin{equation}
\label{eqn:defGamma_i}
\Gamma_i = \left\{
D \in \mathcal{D}: \cat_{\tilde\Mf,\tilde\Cf_0}\tilde D \ge i
\right\}
, \qquad i = 1, \dots, N
\end{equation}
are all non-empty and the following quantity is well defined
\begin{equation}
\label{eqn:cDef}
c_i =
\inf_{D\in \Gamma_i}
\sup_{x \in D}
\J(x),
\quad \text{for any $i = 1, \dots, N$}.
\end{equation}
\begin{lemma}
	\label{lem:criticalValuesCi}
	The following statements hold:
	\begin{itemize}
		\item[(1)] $c_1 \ge \d_m$, where $\d_m$ is defined in 
		\eqref{eqn:defBarDelta}.
		\item[(2)] for every $i = 1, \dots, N$, $c_i$ is a $\V^-$-critical value for $\J$ on $\Mf$
		\item[(3)] for every $i = 1, \dots, N-1$, $c_i < c_{i + 1}$.
	\end{itemize}	
\end{lemma}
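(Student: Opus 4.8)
The overall strategy is the standard Ljusternik--Schnirelmann minimax argument built on the deformation Lemmas~\ref{lem:deformation1} and~\ref{lem:deformation2}, together with the two elementary properties of the relative category used throughout this circle of ideas: monotonicity under admissible homotopies (if $h$ is an admissible homotopy of $\mathcal{N}$ and $h(1,A)\subset A'$ with $A,A'\subset\mathcal{N}$ closed and $\mR$-invariant, then $\cat_{\tilde\Mf,\tilde\Cf_0}\tilde A\le\cat_{\tilde\Mf,\tilde\Cf_0}\tilde{A'}$) and subadditivity. For statement (1), I would argue by contradiction. If $c_1<\d_m$, choose $D\in\Gamma_1$ with $\sup_{x\in D}\J(x)<\d_m$. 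Since $x(0)\in\p\Omega$ for every $x\in\Mf$, Lemma~\ref{lem:barDelta} gives $\max_{s}|\Phi(x(s))|\le\d_0$ for all $x\in D$, so every curve of $D$ lies in the collar $\Phi^{-1}([-\d_0,0])$, where $d\Phi\neq0$ by \eqref{eqn:delta0Def}. I then deform $D$ into $\Cf_0$ inside $\Mf$ in two $\mR$-equivariant stages: first, the deformation retraction of the collar onto $\p\Omega$ obtained by flowing along $\nabla\Phi$, applied curvewise, which keeps every intermediate curve in the collar (hence in $\oOmega$) with endpoints on $\p\Omega$ and carries $D$ into the set of curves with image in $\p\Omega$; second, the symmetric reparametrization $(h_t x)(s)=x\big(\tfrac12+(1-t)(s-\tfrac12)\big)$, which contracts each such curve to the constant curve $s\mapsto x(\tfrac12)\in\p\Omega$. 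Both stages commute with $\mR$ and fix $\Cf_0$, so the composed deformation descends to $\tilde\Mf$ and witnesses $\cat_{\tilde\Mf,\tilde\Cf_0}\tilde D=0$, contradicting $D\in\Gamma_1$. Therefore $c_1\ge\d_m$.

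For statement (2), fix $i$ and suppose $c_i$ is a $\V^-$-regular value; by (1) we have $c_i\ge c_1\ge\d_m>0$, so Lemma~\ref{lem:deformation1} yields $\epsilon>0$ and an admissible homotopy $h$ of $\J^{c_i+\epsilon}$ with $h(1,\J^{c_i+\epsilon})\subset\J^{c_i-\epsilon}$. Pick $D\in\Gamma_i$ with $\sup_D\J<c_i+\epsilon$, so $D\subset\J^{c_i+\epsilon}$, and set $D'=\overline{h(1,D)}\subset\J^{c_i-\epsilon}$, which is closed and $\mR$-invariant because admissible homotopies are $\mR$-equivariant. By monotonicity $\cat_{\tilde\Mf,\tilde\Cf_0}\tilde{D'}\ge\cat_{\tilde\Mf,\tilde\Cf_0}\tilde D\ge i$, so $D'\in\Gamma_i$, whence $c_i\le\sup_{D'}\J\le c_i-\epsilon$, a contradiction. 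Thus $c_i$ is a $\V^-$-critical value.

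For statement (3), note first that $\Gamma_{i+1}\subset\Gamma_i$ forces $c_i\le c_{i+1}$, so it suffices to exclude $c:=c_i=c_{i+1}$. By (2), $c$ is a $\V^-$-critical value with $c\ge\d_m>0$, so Lemma~\ref{lem:deformation2} provides $\epsilon>0$ and an admissible homotopy $h$ of $\J^{c+\epsilon}$ with $h(1,\J^{c+\epsilon}\setminus\mathcal{O}^*)\subset\J^{c-\epsilon}$. Choose $D\in\Gamma_{i+1}$ with $\sup_D\J<c+\epsilon$. The set $D\setminus\mathcal{O}^*$ is closed and $\mR$-invariant ($\mathcal{O}^*$ being open and $\mR$-invariant), is contained in $\J^{c+\epsilon}\setminus\mathcal{O}^*$, and is deformed by $h$ into $D_1:=\overline{h(1,D\setminus\mathcal{O}^*)}\subset\J^{c-\epsilon}$; exactly as in (2), $D_1\notin\Gamma_i$, so $\cat_{\tilde\Mf,\tilde\Cf_0}(\widetilde{D\setminus\mathcal{O}^*})\le\cat_{\tilde\Mf,\tilde\Cf_0}\tilde{D_1}\le i-1$. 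On the other hand $\widetilde{\mathcal{O}^*}$ is a finite disjoint union of the sets $\widetilde{U_{r^*}(x_j)}$, each contractible in itself and all at positive $\J$-level, hence at positive distance from $\tilde\Cf_0$; since $\tilde\Mf\setminus\tilde\Cf_0$ is path-connected, the whole of $\widetilde{\mathcal{O}^*}$ can be enclosed in a single open set contractible in $\tilde\Mf\setminus\tilde\Cf_0$. Subadditivity of the relative category then gives $\cat_{\tilde\Mf,\tilde\Cf_0}\tilde D\le\cat_{\tilde\Mf,\tilde\Cf_0}(\widetilde{D\setminus\mathcal{O}^*})+1\le i$, contradicting $D\in\Gamma_{i+1}$. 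Hence $c_i<c_{i+1}$. (In the non-reversible case the same arguments apply verbatim with $\Cf,\Cf_0$ and $i\in\{1,2\}$ replacing their $\mR$-quotients.)

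The main obstacle is statement (3): one must combine, without losing constants, the fact that Lemma~\ref{lem:deformation2} lowers the sublevel of $D\setminus\mathcal{O}^*$ strictly below $c$ while only the neighbourhood $\mathcal{O}^*$ of the finitely many critical curves has been removed, with the observation that this neighbourhood---being a finite union of contractible pieces inside the connected space $\tilde\Mf\setminus\tilde\Cf_0$---contributes only one unit to the relative category; without the connectedness one would obtain merely the weaker statement that $c_i=c_{i+1}$ forces several critical curves at that level. Statement (1) is otherwise routine but does require care in writing the collar deformation retraction so that it is $\mR$-equivariant, maps $\Mf$ into itself at every time, and, together with the symmetric reparametrization, realises the case $k=0$ of the relative-category definition on the open neighbourhood $\widetilde{\J^{<\d_m}}$ of $\tilde D$.
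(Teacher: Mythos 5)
Your proposal is correct and follows essentially the same route as the paper: part (1) via the same two-stage contraction (collar retraction along $\nabla\Phi$ followed by the symmetric reparametrization to the midpoint constant curve), part (2) via Lemma \ref{lem:deformation1} and homotopy-invariance of the relative category, and part (3) via Lemma \ref{lem:deformation2} together with subadditivity and the fact that $\widetilde{\mathcal{O}}^*$ contributes at most one to the category. The only differences are cosmetic (you deform $D\setminus\mathcal{O}^*$ rather than $\J^{c+\epsilon}\setminus\mathcal{O}^*$, and you spell out the path-connectedness argument behind $\cat_{\tilde\Mf,\tilde\Cf_0}\widetilde{\mathcal{O}}^*\le 1$, which the paper simply asserts).
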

\begin{proof}
	By \eqref{eqn:delta0Def}, there exists a exists a $C^1$--retraction 
	\begin{equation}
	\label{eqn:rDef}
	\mathbf{r}: \Phi^{-1}([-\delta_0,\delta_0]) \to \p\Omega
	\end{equation}
	defined in terms of the flow of $\nabla\Phi$.
	By Lemma \ref{lem:barDelta}, if $x \in \J^{\d_m}$ then $\Phi(x(s)) \in [0,-\delta_0]$.
	Using the retraction \eqref{eqn:rDef}, there exists a homotopy $h$ such that
	$h(1,x)(s) \in \p\Omega$.
	Now define the homotopy
	\[
	k(\tau,x)(s)=x\big((1-\tau)s+\tfrac\tau2\big),
	\]
	so that $k(1,x)(s) = x(1/2) \in \p\Omega$ for every curve that lies in $\p\Omega$.
	Combining the two homotopies $h$ and $k$ we define 
	$$
	H(\tau,x) = 
	\left\lbrace\begin{array}{ll}
	h(2\tau,x) & \text{if } \tau \in\left[0,\frac{1}{2}\right] \\
	k(2\tau - 1, h(1,x)) & \text{if } \tau \in \left[\frac{1}{2},1\right]
	\end{array}
	\right.
	,
	$$
	which is an $\mR$-invariant homotopy such that
	$$
	H(1,\J^{\d_m}) \in \Cf_0
	$$
	and, consequently,
	$$
	\cat_{\widetilde\Mf,\widetilde\Cf_0} \widetilde{\J^{\d_m}} = 0.
	$$
	As a consequence, if $\cat_{\tilde\Mf,\tilde\Cf_0} \tilde{D} \ge 1$, then $D \not\subset \J^{\d_m}$ and there exists a $x \in D$ such that $\J(x) > \d_m$.
	By definition of $c_1$, we infer that $c_1 \ge \d_m$.
	
	Let us prove statement (2).
	Seeking for a contradiction, let $c_i$ be a $\V^-$-regular value.
	By definition of $c_i$, for all $\epsilon > 0$ there exists a $D \in \Gamma_i$ such that
	$$
	\J(x) \le c_i + \epsilon, \qquad \forall x \in D.
	$$
	so $D \subset \J^{c_i + \epsilon}$.
	By Lemma \ref{lem:deformation1}, there exists an admissible homotopy $h: [0,1] \times \J^{c_i + \epsilon} \to \J^{c_i + \epsilon}$ such that 
	$h(1,D) \subset \J^{c_i - \epsilon}$.
	Since the relative category is invariant by admissible homotopies, 
	\begin{equation}
	\cat_{\tilde\Mf,\tilde\Cf_0} \widetilde{h(1,D)} = 
	\cat_{\tilde\Mf,\tilde\Cf_0} \widetilde D \ge i.
	\end{equation}
	Then 
	\begin{equation}
	c_i \le \sup_{x \in D}\J(h(1,x)) \le c_i - \epsilon,
	\end{equation}
	which is an absurd.
	
	We also prove statement (3) arguing by contradiction.
	Let $i$ be such that $c = c_i = c_{i+1}$.
	Since $c$ is a $\V^-$-critical value, we can apply Lemma \ref{lem:deformation2} and there exist an $\epsilon > 0$ and an admissible homotopy $h:[0,1] \times \J^{c + \epsilon} \to \J^{c + \epsilon}$ such that 
	$$
	h(1,\J^{c+ \epsilon} \setminus \mathcal{O}^*) \subset \J^{c - \epsilon},
	$$
	where $\widetilde{\mathcal{O}}^*$ is a contractible set n $\widetilde{\Mf}\setminus\widetilde{\Cf}_0$.
	Moreover, there exists a $D \in \Gamma_{i+1}$ such that $D \subset \J^{c+ \epsilon}$, so 
	\begin{multline}
	i + 1 \le 
	\cat_{\tilde\Mf,\tilde\Cf_0} \widetilde{\J^{c+ \epsilon}} \le
	\cat_{\tilde\Mf,\tilde\Cf_0} \left(\widetilde{\J^{c+ \epsilon}} \setminus \widetilde{\mathcal{O}}^* \right) + 
	\cat_{\tilde\Mf,\tilde\Cf_0} \widetilde{\mathcal{O}}^*
	\\
	\le
	\cat_{\tilde\Mf,\tilde\Cf_0} \left(\widetilde{\J^{c+ \epsilon}} \setminus \widetilde{\mathcal{O}}^* \right)  + 1
	\le 
	\cat_{\tilde\Mf,\tilde\Cf_0} \left(\widetilde{\J^{c-  \epsilon}}  \right)  + 1
	< i + 1,
	\end{multline}
	that is a contradiction.
\end{proof}

\section{Proof of the main theorem}
Now, we are ready to collect the previous results and give the proof of the main theorem.
\begin{proof}[Proof of Theorem \ref{teo:mainTeo}]
	Let the number of $\V^-$-critical curves for $\J$ on $\Mf$ be finite, otherwise the theorem derives directly from Proposition \ref{prop:V-crit-OFG}.
	If $F$ is a reversible Finsler metric, by Lemma \ref{lem:criticalValuesCi} there are at least $N$ $\V^-$-critical values that are strictly greater than zero, so they correspond to non-constant $\V^-$-critical curves.
	By Proposition \ref{prop:V-crit-OFG}, either exists an \OTFGC\ or they are \OGC s.
	It remains to prove that two different $\V^-$-critical values have geometrically distinct $\V^-$-critical curves, and it is sufficient to prove that if $x_1$ and $x_2$ are non-constant $\V^-$-critical curves such that $x_1([0,1]) = x_2([0,1])$, then $\J(x_1) = \J(x_2)$.
	Since $x_1$ and $x_2$ are non-constant $\V^-$ critical curves, $\dot{x}_1$ and $\dot{x}_2$ are always different from zero and there exists a function $\theta:[0,1] \to [0,1]$ of class $C^2$ such that
	$$
	x_2(s) = x_1(\theta(s)),
	$$
	and $\dot\theta(s) \ne 0$ for any $s \in [0,1]$.
	Moreover, either $\theta(0) = 0$ and $\theta(1) = 1$ or 
	$\theta(0) = 1$ and $\theta(1) = 0$.
	Recalling that both $x_1$ and $x_2$ satisfy the geodesics equations \eqref{eqn:geodesicEquationsChern} and that the components $\Gamma_{jk}^i$ of the Chern connection are fiberwise homogeneous of degree zero when the Finsler metric is reversible, we obtain that
	\begin{multline}
	\label{eqn:geomDistinction}
	0 = \ddot{x}_2^i + \Gamma_{jk}^i(x_2,\dot{x}_2)\dot{x}_2^j\dot{x}_2^k \\
	= 
	\ddot{\theta}\ \dot{x}_1^i(\theta) +
	\dot{\theta}^2\ \ddot{x}_1^i(\theta) +
	\dot{\theta}^2\ 
	\Gamma_{jk}^i\left(x_1(\theta),\dot\theta\ \dot{x}_1(\theta)    \right) \dot{x}_1^j(\theta)\dot{x}_1^k(\theta)
	\\
	=
	\ddot{\theta}\ \dot{x}_1^i(\theta) +
	\dot{\theta}^2\ 
	\left(
	\ddot{x}_1^i(\theta)  +
	\Gamma_{jk}^i \left( x_1(\theta), \dot{x}_1(\theta) \right) \dot{x}_1^j(\theta)\dot{x}_1^k(\theta)
	\right)
	= 
	\ddot{\theta}\ \dot{x}_1^i(\theta).
	\end{multline}
	for every $i = 1,\dots, N$.
	As a consequence, $\ddot{\theta}(s) = 0$ for all $s \in [0,1]$, thus either $\theta(s) = s$ or $\theta(s) = 1 - s$.
	In both cases, we obtain that
	$$
	\J(x_1) = 
	\frac{1}{2}\int_{0}^{1}	G(x_1,\dot{x}_1)\ ds
	=
	\frac{1}{2}\int_{0}^{1}	G(x_2,\dot{x}_2)\ ds
	= 
	\J(x_2).
	$$
	
	If $F$ is not reversible, we exploit the relative category
	$$
	\cat_{\Mf,\Cf_0} \Mf \ge \cat_{\Cf,\Cf_0} \Cf \ge 2,
	$$
	and we replace the definition of $\Gamma_i$ in \eqref{eqn:defGamma_i} with
	\begin{equation}
	\Gamma_i = 
	\left\{
	D \in \mathcal{D}: \cat_{\Mf,\Cf_0} D \ge i
	\right\},
	\qquad
	i = 1,2.
	\end{equation}
	Consequently, defining the numbers $c_i$ as in \ref{eqn:cDef}, Lemma \ref{lem:criticalValuesCi} holds and there are two non constant $\V^-$-critical curves for $\J$ on $\Mf$ with different values of the energy functional.
	By Proposition \ref{prop:V-crit-OFG}, either there exists an \OTFGC\ or there are two \OFGC\ with different values of the energy functional.
\end{proof} 

\begin{remark}
	If $F$ is not reversible, the geometrical distinction of two \OFGC s with different values of the energy functional cannot be ensured in general.
	For example, set 
	$
	\Omega = \left\{ q \in \R^2: \norm{q} < 1/2	\right\}
	$
	and consider the Randers metric
	$
	F(q,v) = \norm{v} + \beta v^1
	$,
	with $\beta \in (0,1)$.
	Then  
	$$
	x_1(s) = \left(s - \frac{1}{2}, 0 \right)
	\quad \text{and} \quad
	x_2(s) = \left(\frac{1}{2} - s, 0 \right),
	\qquad s \in [0,1],
	$$
	are both \OFGC s and an easy computation shows that 
	$$
	\J(x_1) = \left(1 + \beta \right)^2  \ne \left(1 - \beta \right)^2  = \J(x_2).
	$$
	However, $x_1$ and $x_2$ are not geometrically distinct.
\end{remark}



\providecommand{\href}[2]{#2}
\providecommand{\arxiv}[1]{\href{http://arxiv.org/abs/#1}{arXiv:#1}}
\providecommand{\url}[1]{\texttt{#1}}
\providecommand{\urlprefix}{URL }

\end{document}